\global\long\def\tp{\mathop{\xymatrix{*+<.7ex>[o][F-]{\scriptstyle \top}}
 } }
\global\long\def\bp{\mathop{\xymatrix{*+<.7ex>[o][F-]{\scriptstyle \bot}}
 } }
\newtheorem{thm}{Theorem}
\newtheorem{lem}[thm]{Lemma}
\newtheorem{prop}[thm]{Proposition}
\newtheorem{defn}[thm]{Definition}
\newtheorem{ex}[thm]{Example}
\newtheorem{rem}[thm]{Remark}
\numberwithin{thm}{section}
\numberwithin{equation}{section}
\newcommand{\Comp}{\mathbb C}
\newcommand{\Real}{\mathbb R}
\newcommand{\norm}[1]{\left\Vert#1\right\Vert}
\newcommand{\abs}[1]{\left\vert#1\right\vert}
\newcommand{\IrrQG}{{\rm Irr}(\mathbb{G})}
\newcommand{\DcoP}{\widehat{\Delta}}
\newcommand{\PolQG}{{\rm Pol}(\mathbb{G})}
\newcommand{\QG}{\mathbb{G}}
\newcommand{\QDG}{\widehat{\mathbb{G}}}
\newcommand{\QH}{\mathbb{H}}
\newcommand{\QDH}{\widehat{\mathbb{H}}}
\newcommand{\la}{\langle}
\newcommand{\ra}{\rangle}
\newcommand{\A}{\mathcal{A}}
\newcommand{\B}{\mathcal{B}}
\newcommand{\F}{\mathcal{F}}
\newcommand{\Gb}{\mathbb{G}}
\newcommand{\I}{\mathcal{I}}
\newcommand{\tor}{\mathbb{T}}
\newcommand{\z}{\mathbb{Z}}
\newcommand{\prt}{\widehat{\otimes}}
\begin{document}

\title[Beurling-Fourier algebras of compact quantum groups]{Beurling-Fourier algebras of compact quantum groups: characters and finite dimensional representations}
\author{Uwe Franz}
\address{D\'epartement de math\'ematiques de Besan\c{c}on,
Universit\'e de Bourgogne Franche-Comt\'e 16, route de Gray, 25 030
Besan\c{c}on cedex, France}
\email{uwe.franz@univ-fcomte.fr}
\urladdr{http://lmb.univ-fcomte.fr/uwe-franz}

\author{Hun Hee Lee}
\address{Department of Mathematical Sciences and the Research Institute of Mathematics, Seoul National University, Gwanak-ro 1, Gwanak-gu, Seoul 08826, Republic of Korea}
\email{hunheelee@snu.ac.kr}
\urladdr{http://http://www.math.snu.ac.kr/~hhlee/}

\thanks{UF was supported by the French "Investissements d'Avenir" program, project ISITE-BFC (contract ANR-15-IDEX-03). HHLee was supported by the Basic Science Research Program through the National Research Foundation of Korea (NRF) Grant NRF-2017R1E1A1A03070510 and the National Research Foundation of Korea (NRF) Grant funded by the Korean Government (MSIT) (Grant No.2017R1A5A1015626).} 

\begin{abstract}
In this paper we study weighted versions of Fourier algebras of compact quantum groups. We focus on the spectral aspects of these Banach algebras in two different ways. We first investigate their Gelfand spectrum, which shows a connection to the maximal classical closed subgroup and its complexification. Second, we study specific finite dimensional representations coming from the complexification of the underlying quantum group. We demonstrate that the weighted Fourier algebras can detect the complexification structure in the special case of $SU_q(2)$, whose complexification is the quantum Lorentz group $SL_q(2,\Comp)$.
\end{abstract}
\keywords{Compact quantum groups, Fourier algebra, complexification, spectrum}
\subjclass[2010]{43A30, 20G42}
\maketitle


\section{Introduction}
\label{sec-intro}

To a locally compact group $G$ one can associate a commutative Banach algebra $A(G)$ called the Fourier algebra. As a Banach space $A(G)$ is the (unique) predual of $VN(G)$, the group von Neumann algebra of $G$ and there is a natural inclusion $A(G) \hookrightarrow C_0(G)$, where the algebra multiplication on $C_0(G)$, i.e. the pointwise multiplication, is inherited by $A(G)$. The Fourier algebra $A(G)$ contains enough information to determine the underlying group $G$. In particular, we have the Gelfand spectrum ${\rm Spec}\,A(G) \cong G$ as topological spaces. Recently, there have been several studies on the weighted versions of Fourier algebras (\cite{GLSS, LS, LST}) under the name of Beurling-Fourier algebras. Especially, \cite{LST} deals with the spectral theory of Beurling-Fourier algebras on compact groups.
For a compact group $G$ a function $w: {\rm Irr}(G) \to (0,\infty)$ is called a {\it weight} if $$w(t) \le w(s)w(s'),\;\; \forall s, s',t\in {\rm Irr}(G),\; t\subseteq s\tp s'.$$
Here, ${\rm Irr}(G)$ is the equivalence class of all irreducible unitary representations of $G$, and $t\subseteq s\tp s'$ means that $t$ is a subrepresentation of the tensor product $s \tp s'$, which will be clarified later. A weight $w$ gives rise to a weighted space $A(G,w)$ given by $$A(G,w) := \{f\in C(G) |\; ||f||_{A(G,w)}:=\sum_{s\in {\rm Irr}(G)}n_sw(s)||\widehat{f}(s)||_1<\infty\},$$
where $n_s$ refers to the dimension of the representation $s$, $||X||_1$ is the trace norm of a square matrix $X$	 and $\widehat{f}(s)$ is the Fourier coefficient of $f$ at $s$ given by $\widehat{f}(s) = \int_G f(g) \bar{s}(g)dg$, where $\bar{s}$ is the conjugate representation of $s$. It has been shown that $A(G,w)$ is still a commutative Banach algebra under the pointwise multiplication. It is natural to be interested in the Gelfand spectrum ${\rm Spec}\,A(G,w)$ and it has been proved to be closely related to the complexification of $G$ (\cite{LST}). This connection is based on the fact that ${\rm Pol}(G)$, the algebra of coefficient functions of irreducible unitary representations of $G$, is sitting densely inside $A(G,w)$ regardless of the choice of $w$.
Then, we have ${\rm Spec}\,A(G,w) \subseteq {\rm Spec}\,{\rm Pol}(G)$, the set of all non-zero multiplicative functionals on ${\rm Pol}(G)$. The latter object ${\rm Spec}\,{\rm Pol}(G)$ can be regarded as a subset of the algebraic dual ${\rm Pol}(G)^\dagger$, which allows a natural locally compact group structure. In \cite{McK} K.\ McKennon defined ${\rm Spec}\, {\rm Pol}(G)$ to be an abstract complexification of $G$, which we denote by $G_\Comp$, following the same construction of a universal complexification model for a compact connected Lie group $G$ by Chevalley (\cite[III. 8]{BtD}).

From the above discussion we could argue that ${\rm Spec}\,A(G,w)$ is always realized inside the complexification $G_\Comp$ of $G$. Moreover, in many cases, the union of ${\rm Spec}\,A(G,\omega)$ becomes the whole complexification. For example, we consider an abelian example of $G = \tor$ and $w_\beta: \z \to (0,\infty),\; n\mapsto \beta^{|n|}$, $\beta \ge 1$. It is well-known (\cite[Proposition 2.8.8]{Kan}) that
	$$\text{Spec}\,A(\tor, w_\beta) \cong \{c\in \Comp: \frac{1}{\beta} \le |c| \le \beta\}$$
homeomorphically. The right hand side of the above identification actually comes from the observation $\text{Spec}\,A(\tor, w_\beta) \subseteq \text{Spec}\,{\rm Pol}(\tor) = \tor_\Comp = \Comp^*$, which is independent of the choice of weight $w_\beta$. This universal point of view allows us to compare various spectra $\text{Spec}\,A(\tor, w_\beta)$ in the same framework and we have
	$$\bigcup_{\beta \ge 1} \text{Spec}\,A(\tor, w_\beta) \cong \Comp^* = \tor_\Comp.$$
The simplest non-abelian example would be $G = SU(2)$ with $w_\beta: {\rm Irr}(SU(2)) = \frac{1}{2}\z_+ \to (0,\infty),\; s \mapsto \beta^{2s}$, $\beta\ge 1$. Then it has been shown (\cite[Example 4.5]{LST}) that
	$$\text{Spec}\,A(SU(2), w_\beta) \cong \{U\begin{bmatrix}c &0 \\ 0 & c^{-1}\end{bmatrix}V: U,V \in SU(2),\; \frac{1}{\beta} \le |c| \le \beta\},$$
homeomorphically. As before the right hand side of the above identification actually comes from the observation $\text{Spec}\,A(SU(2), w_\beta) \subseteq \text{Spec}\,{\rm Pol}(SU(2)) = SU(2)_\Comp = SL_2(\Comp)$, which is independent of the choice of weight $w_\beta$, so that we have
	$$\bigcup_{\beta \ge 1} \text{Spec}\,A(SU(2), w_\beta) \cong SL_2(\Comp) = SU(2)_\Comp.$$

In this paper we would like to continue the above line of research in the quantum context. For a compact quantum group $\QG$ we will first define Beurling-Fourier algebras $A(\QG,w)$ extending the case of compact groups. Since the resulting Banach algebra $A(\QG,w)$ is non-commutative in general we have two possible directions to pursue. First, we can directly follow the classical situation by investigating the Gelfand spectrum ${\rm Spec}\,A(\QG,w)$.
It turns out that ${\rm Spec}\,A(\QG,w)$ contains the information about the complexification of the maximal classical closed subgroup of $\QG$. One of the main differences here is that the algebra $A(\QG,w)$ is in general non-commutative whilst $A(G,w)$ is always commutative. It is quite natural to expect that the Gelfand spectrum of non-commutative Banach algebra will provide only a limited amount of information. This limitation leads us to look for other aspects of $A(\QG,w)$. Let us go back to the classical situation of a compact Lie group $G$. We already know that
$$\text{Spec}\,{\rm Pol}(G) \cong G_\Comp \cong {\rm Spec}\,C_0(G_\Comp)$$
as locally compact spaces. Note further that the Gelfand spectrum ${\rm Spec}\,C_0(G_\Comp)$ is the same as the $C^*$-algebra spectrum ${\rm sp}\,C_0(G_\Comp)$, which is the set of equivalence classes of all irreducible $*$-representation $\pi : C_0(G_\Comp) \to B(H)$ for some Hilbert space $H$ equipped with the canonical topology. The latter provides us with a possible quantum extension of the story. More precisely, if we begin with an irreducible $*$-representation $\pi : C_0(G_\Comp) \to B(H)$ then $H$ must be 1-dimensional and there is a point $x\in G_\Comp$ such that $\pi = \varphi_x$, the evaluation functional on $C_0(G_\Comp)$ at $x$.
The functional $\varphi_x$ can be transferred to the setting of $H(G_\Comp)$, the space of holomorphic functions on $G_\Comp$ and from the standard Lie theory it is well-known that ${\rm Pol}(G)$ can be embedded in $H(G_\Comp)$ in a canonical way. We would like to repeat the same procedure in the quantum setting, namely, we consider the {\it complexification} of the compact quantum group $\QG$. It is commonly believed that the Drinfeld-double construction gives us an object we should regard as {\it the} complexification in many genuine quantum group cases, but we will focus on the most well-understood example of $SL_q(2,\Comp)$, which is the complexification of $SU_q(2)$, $-1<q\ne 0<1$.

This paper is organized as follows. In section \ref{sec-prelim} we collect basic materials we need in this paper including the definition of $SL_q(2,\Comp)$. In section \ref{sec-BF-alg-Def} we define Beurling-Fourier algebras on compact quantum groups using central weights.
In section \ref{sec-characters} we investigate the Gelfand spectrum of the Beurling-Fourier algebras on compact quantum groups and clarify its connection to the complexification of the maximal classical closed subgroup. In the final section \ref{sec-finite-dim-rep} we begin with more details of $SL_q(2,\Comp)$ and the associated Iwasawa decomposition. We, then, consider irreducible $*$-representations on $C_0(SL_q(2, \Comp))$ and classify them according to their (complete) boundedness on Beurling-Fourier algebras.

\section{Preliminaries}\label{sec-prelim}

\subsection{Compact quantum groups}
In this paper $\otimes$ refers to the minimal tensor product of $C^*$-algebras and operator spaces, whilst $\odot$ denotes the algebraic tensor product.

Recall that a compact quantum group $\QG$ is a pair $(C(\QG), \Delta)$, where $C(\QG)$ is a unital $C^*$-algebra considered as the algebra of ``continuous functions on $\QG$'' and $\Delta:C(\QG) \to C(\QG) \otimes C(\QG)$ is a unital, injective $*$-homomorphism which is coassociative, i.e.
\[ (\Delta \otimes id)\circ \Delta = (id \otimes \Delta)
\circ\Delta
\]
and
\[
\overline{{\rm Lin}}((1\otimes C(\QG))\Delta(C(\QG)) ) = \overline{{\rm Lin}}((C(\QG) \otimes
1)\Delta(C(\QG)) ) = C(\QG) \otimes C(\QG).
\]
In this case there exists a unique state $h \in C(\QG)^*$ called the \emph{Haar state} such that for all $a \in C(\QG)$
\[
(h \otimes id)\circ  \Delta (a) = ( id \otimes h)\circ  \Delta (a) = h(a) 1.
\]

An element $u =(u_{kl})_{1\le k,l\le n}\in M_n(C(\QG)) = M_n \otimes C(\QG)$ is called an {\emph{$n$-dimensional representation of}} $\QG$ if we have 	
	$$\Delta(u_{kl}) =
\sum_{j=1}^n u_{kj} \otimes u_{jl}\;\; \text{for all} \;\; 1\le k,l \le n.$$
A representation $u$ is said to be \emph{unitary}, if $u$ is unitary, and \emph{irreducible}, if the only matrices $T\in M_n$ with
$Tu=uT$ are multiples of the identity matrix. Two unitary  representations $u,v\in M_n(C(\QG))$ are called
\emph{(unitarily) equivalent} (we write $u\cong v$), if there exists a unitary matrix $U\in M_n(\mathbb{C})$ such that $Uu=v\,U$. For two representations $u\in M_n(C(\QG))$, $v\in M_m(C(\QG))$ we have the concept of their direct sum $u\oplus v \in (M_n \oplus M_m)\otimes C(\QG) \subseteq M_{n+m}(C(\QG))$. It is well-known that any finite demensional representation decomposes into a direct sum of irreducible representations. If $u$ is an irreducible unitary representation appearing in the decomposition of another unitary representation $v$, then we denote it by
	$$u \subseteq v.$$
We also define their tensor product $u \tp v$ by
	$$u \tp v = u_{13}v_{23} \in M_n\otimes M_m \otimes C(\QG) = M_{nm}(C(\QG))$$
following the leg notation. We recall a related notation $u \bp v$. Let $u\in M_n(\A)$ and $v\in M_n(\B)$ for some $C^*$-algebras $\A$ and $\B$. Then, we define
	$$u \bp v = u_{12}v_{13} \in M_n(\A\otimes \B).$$
For $A\in M_n(B(H))$, $B\in M_n(B(K))$, $C\in M_m(B(H))$ and $D\in M_m(B(K))$ we clearly have
	\begin{equation}\label{eq-tensor-top-bot}
		(A \bp B) \tp (C \bp D) = (A \tp C) \bp (B \tp D)
	\end{equation}
in $M_{nm}(B(H)\otimes B(K))$.
	
An important feature of compact quantum groups is the existence of the dense $*$-subalgebra $\text{Pol}(\QG)$, which is in fact a Hopf $*$-algebra with the coproduct
$\Delta|_{\text{Pol}(\QG)}$ -- so for example $\Delta: \text{Pol}(\QG) \to \text{Pol}(\QG) \odot \text{Pol}(\QG)$. Fix a complete family $(u^{(s)})_{s\in \IrrQG}$ of mutually inequivalent irreducible unitary representations of $\QG$, then $\{u^{(s)}_{k\ell}: s\in \text{Irr}(\QG), 1\le k,\ell\le n_s\}$ (where $n_s$ denotes the dimension of $u^{(s)}$) is a linear basis of $\text{Pol}(\QG)$, cf.\ \cite[Proposition 5.1]{woronowicz98}. For each $s \in \text{Irr}(\QG)$ there exists a unique positive matrix $Q_s \in GL(n_s)$ such that $\text{Tr}(Q_s) = \text{Tr}(Q_s^{-1})=:d_s\geq n_s$ and we have for all $s,t \in \text{Irr}(\QG)$
	\begin{equation}\label{eq-ortho}
	h\left(u_{ij}^{(s)} (u_{kl}^{(t)})^* \right) = \delta_{s t} \delta_{ik} \frac {(Q_s)_{l,j}}{d_s},\; \;h\left((u_{ij}^{(s)})^* u_{kl}^{(t)} \right) = \delta_{st} \delta_{jl} \frac {(Q_s^{-1})_{k,i}}{d_s}.
	\end{equation}
The number $d_s$ is called the \emph{quantum dimension} of the representation $u^{(s)}$. 

A compact quantum group always allows the reduced and the universal versions $C_r(\QG)$ and $C_u(\QG)$ as follows. Let $\pi : C(\QG) \to B(L^2(\QG))$ be the GNS representation of $C(\QG)$ with respect to the Haar state $h$. Then $C_r(\QG)$ is given by $\overline{\pi(C(\QG))} \subseteq B(L^2(\QG))$. The universal version $C_u(\QG)$ is the $C^*$-algebra completion of $\PolQG$ with respect the $C^*$-norm $||\cdot||_u$ given by
	$$||a||_u = \sup_{\pi} ||\pi(a)||,$$
where the supremum runs over all unital $*$-homomorphism $\pi: \PolQG \to B(H)$ for some Hilbert space $H$.	Both of the versions are equiped with natural compact quantum group structures with the transferred co-multiplications $\Delta_r$ and $\Delta_u$. From the universality of $C_u(\QG)$ we have a  natural onto $*$-homomorphism $\rho: C_u(\QG) \to C_r(\QG)$ extending the identity map on $\PolQG$. 

\subsection{Discrete quantum groups and the Fourier algebra of compact quantum groups}\label{subsec-discrete}

To a compact quantum group $\QG$ we can associate its discrete dual quantum group $\widehat{\QG}$. We begin with the $C^*$-algebra $c_0(\widehat{\QG})$ and the von Neumann algebra $\ell^\infty(\widehat{\QG}) = VN(\QG)$ given by
	$$c_0(\widehat{\QG}) = c_0\,\text{-}\bigoplus_{s\in \text{Irr}(\QG)} M_{n_s},\;\; \ell^\infty(\widehat{\QG}) = \ell^\infty \text{-}\bigoplus_{s\in \text{Irr}(\QG)} M_{n_s}.$$
There is a natural co-multiplication $\DcoP: \ell^\infty(\widehat{\QG}) \to \ell^\infty(\widehat{\QG}) \bar{\otimes}\, \ell^\infty(\widehat{\QG})$, which is normal, given by
	$$(\DcoP \otimes id)\mathbb{U} = \mathbb{U}_{12}\mathbb{U}_{13},$$
where $\mathbb{U} = \oplus_{s\in \text{Irr}(\QG)} u^{(s)}$ is the multiplicative unitary in the multiplier algebra $M(c_0(\widehat{\QG}) \otimes C(\QG))$. Here, $\bar{\otimes}$ is the spatial tensor product of von Neumann algebras. The algebra $\ell^\infty(\widehat{\QG})$ is equipped with the left and the right Haar weight $\widehat{h}_L$ and $\widehat{h}_R$ given by
	$$\widehat{h}_L(X) = \sum_{s\in \IrrQG}d_s {\rm Tr}(X_sQ_s),\;\; \widehat{h}_R(X) = \sum_{s\in \IrrQG}d_s {\rm Tr}(X_sQ^{-1}_s)$$
for $X = (X_s)_{s\in \IrrQG} \in c_{00}(\QDG)$, where $c_{00}(\QDG)$ is the subalgebra of $c_0(\widehat{\QG})$ consisting of finitely supported elements.

The latter algebra $c_{00}(\QDG)$ can be identified as a vector space with $\PolQG$ through the following Fourier transform on $\QDG$: 
	$$\F = \F^{\QDG}: c_{00}(\QDG) \to \PolQG,\;\; X \mapsto (X \cdot \widehat{h}_R \otimes id)U.$$
Here we used the notation $a\cdot\varphi:x\mapsto \varphi(xa)$, $\varphi\cdot a: x\mapsto \varphi(ax)$ for $\varphi$ a linear functional on some algebra $A$ and $a,x\in A$.
	
We can easily check that
	\begin{equation}\label{eq-Fourier}
	\F(e^s_{kl}) = d_s q_l(s)^{-1}u^{(s)}_{lk}
	\end{equation}
and so that $\F$ is a bijection. The algebraic dual $\PolQG^\dagger$ is naturally equipped with the structure of a multiplier Hopf $*$-algebra in the sense of Van Daele \cite{vdmult}, dual to the one of $\PolQG$. The Fourier transform $\F$ transfers the algebra structure of $\PolQG^\dagger$ to $\prod_{s\in \IrrQG} M_{n_s} = c_{00}(\QDG)^\dagger$, which is coming from the co-algebra structure of $\PolQG$. More precisely, for any $X,Y\in \prod_{s\in \IrrQG} M_{n_s}$ and $a\in \PolQG$ we have
	\begin{equation}\label{eq-alg-coalg}
	\la XY, \F^{-1}(a)\cdot \widehat{h}_R\ra = \la X\otimes Y,  (\F^{-1} \otimes \F^{-1})\circ \Delta (a)\cdot ( \widehat{h}_R \otimes  \widehat{h}_R) \ra.
	\end{equation}
Indeed, for $a= u^{(s)}_{kl}$ we can actually check that both of the above sides become $\sum^{n_s}_{j=1}(X_s)_{kj}(Y_s)_{jl}$ by recalling \eqref{eq-Fourier}. Note that the co-multiplication $\DcoP$ on $\ell^\infty(\widehat{\QG})$ can be uniquely extended to the following map (still denoted by $\DcoP$ by abuse of notation)
\begin{equation}\label{eq-Dual-comultiplication}
\DcoP : \prod_{s\in \IrrQG} M_{n_s} \to \prod_{s, t\in \IrrQG} M_{n_s} \otimes M_{n_t},
\end{equation}
which can be computed, up to unitary equivalence, using Proposition \ref{prop-fusion} to be explained later in the paper. See \cite[Section 1.6]{neshveyev+tuset} for other explanation.

The predual $\ell^1(\widehat{\QG})=A(\QG)$ of $\ell^\infty(\widehat{\QG}) = VN(\QG)$ is equipped with the algebra multiplication given by the preadjoint map $\DcoP_*$. We call $(A(\QG), \DcoP_*)$ the {\it Fourier algebra on $\QG$}. We have a natural embedding of $c_{00}(\QDG)$ into $A(\QG)$:
	\begin{equation}\label{eq-embed-A(G)}
	c_{00}(\QDG) \hookrightarrow A(\QG),\;\; X \mapsto X\cdot \widehat{h}_R.
	\end{equation}
Of course, there are many other possible embeddings of $c_{00}(\QDG)$ into $A(\QG)$ such as using left Haar weight, but we will stick to the above convention.

Using the above embedding we can extend the Fourier transform $\F$ to $A(\QG)$ as follows.
	$$\F : A(\Gb) \to C_r(\Gb),\;\; \psi \mapsto (\psi \otimes id)\mathbb{U},$$
where we use the same symbol by abuse of notation. Note that $\F$ is an injective homomorphism and the norm structure on $A(\QG)$ is given by the duality $(A(\QG), VN(\QG))$. Thus, for the element $X\cdot \widehat{h}_R \in A(\QG)$ with $X = (X_s)\in c_{00}(\widehat{\QG})$ we get the concrete norm formula as follows.
	$$||X\cdot \widehat{h}_R||_{A(\QG)} = \sum_{s\in \IrrQG} d_s \cdot ||X_sQ^{-1}_s||_{S^1_{n_s}},$$
where $S^1_m$ refers to the trace class operators acting on $\ell^2_m$. The Fourier transform $\F$ also transfers the $*$-structure on $\PolQG$ into $c_{00}(\QDG)\subseteq A(\QG)$. Indeed, for $X\in c_{00}(\QDG)$ we define $X^\star$ given by $\F(X^\star) = \F(X)^*$. Then from the fact that $(\widehat{S}\otimes id)\mathbb{U}^* = \mathbb{U}$ we can easily check that
	\begin{equation}\label{eq-inv}
	\F(X^\star) = \F((\widehat{h}_R\cdot X^*)\circ \widehat{S}),
	\end{equation}
where $\widehat{S}$ is the antipode of the dual quantum group $\QDG$.
	
In this article we are often interested in a homomorphism
	$$\varphi: {\rm Pol}(\QG) \to B(H)$$
for some Hilbert space $H$ and its transferred version
	$$\tilde{\varphi}:=\varphi \circ \F: c_{00}(\widehat{\QG}) \to B(H).$$
The map $\tilde{\varphi}$ can be understood as an element of $v = v_\varphi\in \prod_{s\in \IrrQG} (M_{n_s}\otimes B(H))$, which is given by
	\begin{equation}\label{element-v}
	v_\varphi = (id \otimes \varphi)\mathbb{U}
	\end{equation}
or equivalently
	$$v_\varphi = (v^{(s)})_{s\in \IrrQG}\;\; \text{with}\;\; v^{(s)} = (id\otimes \varphi) (u^{(s)}).$$
Indeed, for $X \in c_{00}(\QDG)$ we have
	$$\la \tilde{\varphi}, X\ra = \varphi \circ \F (X) = \varphi((X\cdot \widehat{h}_R \otimes id)\mathbb{U}) = (X\cdot \widehat{h}_R \otimes \varphi)\mathbb{U} = \la X\cdot \widehat{h}_R, v_\varphi \ra.$$
From this identification we can easily observe that $\tilde{\varphi}$ extends to a completely bounded map from $A(\QG)$ to $B(H)$ if and only if $v_\varphi \in VN(\QG) \bar{\otimes} B(H)$, the spatial tensor product of von Neumann algebras $VN(\QG)$ and $B(H)$ with
	\begin{equation}\label{eq-OS-knowledge}||\tilde{\varphi}||_{cb} = ||v_\varphi||_{VN(\QG) \bar{\otimes} B(H)} =  \sup_{s\in \IrrQG} ||v^{(s)}||_{M_{n_s}\otimes B(H)}<\infty,
	\end{equation}
which we will frequently use later in the weighted context. Here, we are assuming that the readers are familiar with standard concepts of operator spaces such as completely bounded maps and the cb-norm denoted by $||\cdot||_{cb}$. See \cite{Pisier} for the details of operator space theory. For example, we used the fact that $CB(M_*, N) \cong M \bar{\otimes} N$ completely isometrically for von Neumann algebras $M$ and $N$ for the above \eqref{eq-OS-knowledge}.

\subsection{Closed quantum subgroups}
A compact quantum group $\QH$ is a closed quantum subgroup of another compact quantum group $\QG$ (see \cite{Tomatsu} for the details) if there is a surjective $*$-homomorphism
	$$R_\QH : {\rm Pol}(\QG) \to {\rm Pol}(\QH)$$
such that
	$$\Delta_\QH \circ R_\QH = (R_\QH \otimes R_\QH)\circ \Delta_\QG,$$
where $\Delta_\QH$ and $\Delta_\QG$ are the corresponding co-multiplications. In this case there is a uniquely determined normal unital $*$-homomorphism
	\begin{equation}\label{eq-embedding}
	\iota: VN(\QH) \to VN(\QG)
	\end{equation}
such that
	$$\DcoP_{\QDG} \circ \iota = (\iota \otimes \iota)\circ \DcoP_{\QDH}.$$	
The relationship between $R_\QH$ and $\iota$ is given by the equation
	\begin{equation}\label{eq-restriction-embedding}
	(id\otimes R_\QH) \mathbb{U}_\QG = (\iota \otimes id)\mathbb{U}_\QH,
	\end{equation}
where $\mathbb{U}_\QG$ and $\mathbb{U}_\QH$ are multiplicative unitaries of $\QG$ and $\QH$, respectively (\cite[Lemma 2.10]{Tomatsu}).

\subsection{Quantum double, $SL_q(2,\Comp)$ and the Iwasawa decomposition}\label{subsec-SLq(2)}

For a compact quantum group $\QG$ we construct the quantum double $\QG \Join \QDG$ as follows. The associated $C^*$-algebra is given by $C_0(\QG \Join \QDG) := C(\QG) \otimes c_0(\QDG)$ with the co-multiplication
	$$\Delta_\Comp = (id \otimes \Sigma_{\mathbb{U}} \otimes id)(\Delta \otimes \DcoP),$$
where $\Sigma_{\mathbb{U}}$ is the $*$-isomorphism given by 
	$$\Sigma_{\mathbb{U}} : C(\QG) \otimes c_0(\QDG) \to c_0(\QDG) \otimes C(\QG),\;\; a\otimes x \mapsto \mathbb{U}(x\otimes a)\mathbb{U}^*.$$ The left (and right) Haar weight on $C_0(\QG \Join \QDG)$ is given by $h\otimes \widehat{h}_{R}$. We denote the quantum double $\QG \Join \QDG$ shortly by $\QG_\Comp$ when $\QG = G_q$, the Drinfeld-Jimbo $q$-deformation of a compact, semi-simple and simply connected Lie group $G$. We are interested in the $C^*$-algebra spectrum ${\rm sp}\, C_0(\QG_\Comp)$, i.e., the equivalence classes of irreducible $*$-representations of the $C^*$-algebra $C_0(\QG_\Comp)$. We know that
		\begin{equation}\label{eq-spectrum}
		{\rm sp}\,C_0(\QG_\Comp) = {\rm sp}\,(C(\QG) \otimes c_0(\QDG)) \cong {\rm sp}\,C(\QG) \times {\rm sp}\,c_0(\QDG)
		\end{equation}
as topological spaces via the correspondence
	$$\pi \in {\rm sp}\,C_0(\QG_\Comp) \mapsto (\pi_c, \pi_d) \in {\rm sp}\,C(\QG) \times {\rm sp}\,c_0(\QDG)$$
since $c_0(\QDG)$ is a direct sum of matrix algebras. We remark that the above $C^*$-algebra spectra are known, namely ${\rm sp}\,C(\QG)$ (\cite{Soi}) and ${\rm sp}\,c_0(\QDG) \cong {\rm Irr}(\QG)$ (\cite{Kus}). It will turn out that the former information is not needed for us, but the latter information will be crucial. Moreover, the latter identification has been concretely established in \cite{podles+woronowicz90} in the case of $\QG = SU_q(2)$, $-1<q\ne 0<1$, which we describe below.

The $C^*$-algebra $C(SU_q(2))$ is the universal $C^*$-algebra with the two generators $a_q$ and $c_q$ satisfying the commutation relations which make $\left(\begin{array}{ll} a_q & -qc_q^* \\ c_q & a_q^* \end{array}\right)$ unitary, i.e.,
	$$a^*_qa_q+c^*_qc_q = I = a_qa^*_q+q^2c^*_qc_q,\;\, c^*_qc_q=c_qc^*_q,\;\, a_qc_q=qc_qa_q,\;\, a_qc^*_q=qc^*_qa_q.$$
Moreover, the co-multiplication is determined by
	$$\Delta(a_q) = a_q\otimes a_q - q c_q^* \otimes c_q,\;\; \Delta(c_q) = c_q\otimes a_q + a^*_q \otimes c_q.$$
The representation theory of $SU_q(2)$ is well-known. We have
	$${\rm Irr}(SU_q(2)) = \frac{1}{2}\z_+\;\; \text{with}\; {\rm dim}\, s = 2s+1\;\;\text{for}\;\; s\in {\rm Irr}(SU_q(2)).$$ Moreover, we have the following fusion rule:
	$$u^{(1/2)} \tp u^{(s)} \cong u^{(s+1/2)} \oplus u^{(s-1/2)},\; s\in \frac{1}{2}\z_+,\, s\ge 1/2$$
with $u^{(0)} = 1$ being the trivial representation. See \cite{woronowicz87} for more details.

In this paper we will focus on the quantum double $SU_q(2) \Join \widehat{SU_q(2)}$, which is called the quantum Lorentz group $SL_q(2,\Comp)$ since it is regarded as {\it the} complexification $SU_q(2)_\Comp$ of $SU_q(2)$.

\section{The Beurling-Fourier algebra of a compact quantum group}\label{sec-BF-alg-Def}

We would like to consider a weighted version of $A(\mathbb{G})$ following the construction for compact groups, which begins with the following understanding of the coproduct on $VN(\QG)$.

\begin{prop}\label{prop-fusion}
Let $\Gb$ be a compact quantum group and $X = (X_s)_{s\in\IrrQG} \in VN(\QG)$. Then for each $t_1, t_2 \in \IrrQG$ we have the following unitary equivalence
$$\DcoP(X)(t_1, t_2) \cong \bigoplus_{u^{(s)} \subseteq u^{(t_1)} \tp u^{(t_2)}} X_s.$$
Moreover, the unitary matrix for the above equivalence is the same as the one in the decomposition of $u^{(t_1)} \tp u^{(t_2)}$.
\end{prop}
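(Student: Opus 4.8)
The plan is to unpack the definition of $\DcoP$ from the relation $(\DcoP\otimes\mathrm{id})\mathbb{U} = \mathbb{U}_{12}\mathbb{U}_{13}$ and compare it with the defining relation $\Delta(u_{kl}) = \sum_j u_{kj}\otimes u_{jl}$ of representations via the Fourier transform. First I would evaluate both sides of $(\DcoP\otimes\mathrm{id})\mathbb{U} = \mathbb{U}_{12}\mathbb{U}_{13}$ in the component indexed by a pair $(t_1,t_2)$. Since $\mathbb{U} = \bigoplus_{s} u^{(s)}$, the $(t_1,t_2)$-block of the right-hand side is $u^{(t_1)}_{13}u^{(t_2)}_{23}$ sitting in $M_{n_{t_1}}\otimes M_{n_{t_2}}\otimes C(\QG)$, which is precisely $u^{(t_1)}\tp u^{(t_2)}$ in the leg notation of the preliminaries. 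Hence $\DcoP(X)(t_1,t_2)$ is the operator obtained by "plugging $X$ into" the representation $u^{(t_1)}\tp u^{(t_2)}$, i.e. it is $(\mathrm{id}\otimes\widetilde\varphi_X)(u^{(t_1)}\tp u^{(t_2)})$ where $\widetilde\varphi_X$ corresponds to $X$ under the identification in \eqref{element-v}; concretely, for $X = \oplus_s X_s \in \prod_s M_{n_s}$ one substitutes $u^{(s)}_{kl}\mapsto (X_s)_{kl}$.

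Next I would invoke the standard fact that $u^{(t_1)}\tp u^{(t_2)}$ decomposes as a direct sum of irreducibles: there is a unitary $U = U_{t_1,t_2}\in M_{n_{t_1}n_{t_2}}(\Comp)$ with
$$U\,\big(u^{(t_1)}\tp u^{(t_2)}\big)\,U^* = \bigoplus_{u^{(s)}\subseteq u^{(t_1)}\tp u^{(t_2)}} u^{(s)},$$
counted with multiplicity. Applying the substitution $u^{(s)}_{kl}\mapsto (X_s)_{kl}$ to this identity — this is legitimate because the substitution is the algebra homomorphism $\widetilde\varphi_X\circ\F$ applied entrywise, and $U$ has scalar entries so it passes through — yields
$$U\,\DcoP(X)(t_1,t_2)\,U^* = \bigoplus_{u^{(s)}\subseteq u^{(t_1)}\tp u^{(t_2)}} X_s,$$
which is exactly the claimed unitary equivalence, and moreover the implementing unitary is the same $U$ used to decompose $u^{(t_1)}\tp u^{(t_2)}$, as asserted in the second sentence of the statement.

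The main obstacle, and the point that needs the most care, is justifying that the substitution $u^{(s)}_{kl}\mapsto (X_s)_{kl}$ is compatible with the two operations in play: on one hand it must intertwine the co-algebra structure of $\PolQG$ with the product on $\prod_s M_{n_s}$ so that it genuinely computes $\DcoP(X)$ (this is essentially the content of \eqref{eq-alg-coalg}, so I would cite that computation rather than redo it), and on the other hand it must respect the decomposition identity for $u^{(t_1)}\tp u^{(t_2)}$. The latter is delicate because $\DcoP$ as a map on the von Neumann algebra is a priori only defined via \eqref{eq-Dual-comultiplication} and one needs the extension to $\prod_s M_{n_s}$ to agree with the naive entrywise substitution; I would handle this by checking the identity first on $c_{00}(\QDG)$, where everything is finite-dimensional and the duality pairing \eqref{eq-alg-coalg} applies directly with $a = u^{(t_1)}_{kl}$ or $u^{(t_2)}_{kl}$, and then extending to all of $VN(\QG)$ by normality of $\DcoP$ together with the fact that the right-hand side $\bigoplus_s X_s$ depends normally and the unitary $U$ is fixed. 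Finally I should remark that the multiplicities in the direct sum on the right are exactly the fusion multiplicities, so the statement is consistent with — and in fact the operator-algebraic incarnation of — the classical fact that the coproduct on the dual is dictated by the fusion rules.
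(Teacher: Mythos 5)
Your argument is essentially correct and completable, but note that the paper itself gives no argument for this proposition: its proof is a citation to \cite{KS17} (Proposition 4.2 there, which corrects the multiplicity error in \cite{FLS}). So your self-contained proof necessarily departs from the paper, though it follows the standard route underlying the cited reference: read off the $(t_1,t_2)$-block of the defining relation $(\DcoP\otimes id)\mathbb{U}=\mathbb{U}_{12}\mathbb{U}_{13}$, observe that this block of the right-hand side is $u^{(t_1)}\tp u^{(t_2)}$, use that the decomposition unitary $U_{t_1,t_2}$ has scalar entries and hence passes through the relevant slice map, and conclude $U\,\DcoP(X)(t_1,t_2)\,U^*=\bigoplus_s X_s$ with the summands repeated according to the fusion multiplicities. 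Your plan of first verifying the identity on $c_{00}(\QDG)$ and then extending by weak$^*$-density and normality is also the right mechanism, since for fixed $t_1,t_2$ both sides involve only finitely many blocks and each block map $X\mapsto X_s$ is normal. What your route buys, compared with the paper's bare citation, is an explicit explanation of where the ``same unitary'' claim and the multiplicities come from.

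One correction is needed in your final paragraph: \eqref{eq-alg-coalg} is the wrong half of the Hopf duality for this purpose. It expresses that the \emph{product} of $\prod_s M_{n_s}$ is dual to the \emph{coproduct} of $\PolQG$, whereas identifying $\DcoP(X)(t_1,t_2)$ with the entrywise substitution $u^{(s)}_{kl}\mapsto (X_s)_{kl}$ applied to $u^{(t_1)}\tp u^{(t_2)}$ requires the other pairing, namely that the entries of $\DcoP(X)(t_1,t_2)$ are obtained by pairing $X$ against the products $u^{(t_1)}_{ij}u^{(t_2)}_{kl}$, equivalently $\la \DcoP(X),\omega\otimes\mu\ra=\la X,\omega\star\mu\ra$ for normal functionals $\omega,\mu$. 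This does not follow from \eqref{eq-alg-coalg}, but it does follow by slicing $(\DcoP\otimes id)\mathbb{U}=\mathbb{U}_{12}\mathbb{U}_{13}$ in the $C(\QG)$-leg by functionals of the form $h(\cdot\, b)$, $b\in\PolQG$, which realize every element of $c_{00}(\QDG)$ as a slice $(id\otimes\omega)\mathbb{U}$ --- that is, by making your first paragraph precise. So the fix is a one-line rerouting of the justification rather than a new idea, and with it your proof is sound.
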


\begin{proof}
See \cite[Proposition 4.2]{KS17}, which corrects a mistake regarding multiplicity in \cite[Proposition 4.3]{FLS}.
\end{proof}	
	
The above Proposition enables us to proceed just as in the compact group case (\cite{LS, LST}), so that we define the (central) weights as follows.

\begin{defn}
We call $w : \IrrQG \to (0,\infty)$ a (central) weight on the dual of $\mathbb{G}$ if it satisfies
\begin{equation}\label{def-weight}
w(s) \le w(t) w(t')
\end{equation}
whenever $u^{(s)} \subseteq u^{(t)} \tp u^{(t')}$. We assume in addition the weight $w$ to be bounded below, i.e. there is $\delta>0$ such that $w(s) \ge \delta$ for all $s\in \IrrQG$.
\end{defn}

The weights give rise to weighted versions $A(\QG, w)$ and $VN(\QG, w^{-1})$ of the spaces $A(\QG)$ and $VN(\QG)$. We begin with $VN(\QG, w^{-1})$ given by
	$$VN(\QG, w^{-1}) := \{ Y = (Y_s)_{s\in \IrrQG} \in \prod_{s\in \IrrQG} M_{n_s}: \sup_{s\in \IrrQG} \frac{||Y_s||_{M_{n_s}}}{w(s)} <\infty \},$$
where $||Y||_{VN(\QG, w^{-1})} := \sup_{s\in \IrrQG} w(s)^{-1} \cdot ||Y_s||_{M_{n_s}}$.
We let $W = (w(s)I_s)_{s\in \IrrQG} \in \prod_{s\in \IrrQG} M_{n_s}$ be the operator associated to the weight $w$ and define the natural isometry
	$$\Phi: VN(\QG) \to VN(\QG, w^{-1}),\;\; X = (X_s) \mapsto XW = (w(s)X_s).$$
We now define the weighted space $A(\QG, w)$ to be the unique predual of $VN(\QG, w^{-1})$. Similar to the embedding \eqref{eq-embed-A(G)} we consider another embedding
	\begin{equation}\label{eq-embed-A(G,w)}
	c_{00}(\QDG) \hookrightarrow A(\QG, w),\;\; X \mapsto X\cdot \widehat{h}_R.
	\end{equation}
Then we have for any $X = (X_s) \in c_{00}(\QDG)$ that
	\begin{align*}
	||X\cdot \widehat{h}_R||_{A(\QG, w)}
	& = \sup_{\|Y\|_{VN(\QG, w^{-1})}\le 1}|\la X\cdot \widehat{h}_R, Y \ra|\\
	& = \sup_{\|Y\|_{VN(\QG, w^{-1})}\le 1}|\widehat{h}_R(YX)|\\
	& = \sum_{s\in \IrrQG} d_s w(s) \cdot ||X_sQ^{-1}_s||_{S^1_{n_s}}.
	\end{align*}
By density we can see that the above formula extends to general elements in $A(\QG, w)$, so that we have
	$$A(\QG, w) = \{ X\cdot \widehat{h}_R: X = (X_s) \in \prod_{s\in \IrrQG} M_{n_s},\;\sum_{s\in \IrrQG} d_s w(s) \cdot ||X_sQ^{-1}_s||_{S^1_{n_s}} <\infty \}.$$
Note that the preadjoint $\Phi_*$ of $\Phi$ becomes an isometry between $A(\QG,w)$ and $A(\QG)$ as follows.
	$$\Phi_*: A(\QG,w) \to A(\QG),\; X\cdot \widehat{h}_R \mapsto Y\cdot \widehat{h}_R,$$
where $Y_s = w(s) X_s$, $s\in \IrrQG$. The condition $w$ being bounded below gives us the inclusion $A(\QG, w) \subseteq A(\QG)$. We equip $VN(\QG, w^{-1})$ with the operator space structure making the above map $\Phi$ a complete isometry, so that the predual $A(\QG,w) = VN(\QG, w^{-1})_*$ also has a natural operator space structure. Since $w\times w$ is clearly a weight on the dual of $\QG \times \QG$, we can consider the associated weighted spaces $VN(\QG\times \QG, w^{-1}\times w^{-1})$ and $A(\QG \times \QG, w\times w)$. Then, we clearly have
	$$VN(\QG\times \QG, w^{-1}\times w^{-1}) \cong (A(\QG,w) \prt A(\QG,w))^*$$
completely isometrically via the formal identity. Here, $\prt$ is the projective tensor product of operator spaces.

Now Proposition \ref{prop-fusion} implies that \eqref{def-weight} is equivalent to
	$$||\DcoP(W)(W^{-1} \otimes W^{-1})||_\infty \le 1,$$
where we use the extended co-multiplication $\DcoP$ from \eqref{eq-Dual-comultiplication}, so that we have a normal complete contraction
	$$\tilde{\DcoP}: VN(\QG) \to VN(\QG \times \QG),\;\; X \mapsto \DcoP(X)\DcoP(W)(W^{-1} \otimes W^{-1}).$$
Equivalently, we have a weak$^*$-weak$^*$ continuous complete contraction
	$$\DcoP: VN(\QG, w^{-1}) \to VN(\QG \times \QG, w^{-1}\times w^{-1}),\; Y\mapsto \DcoP(Y).$$
Thus, for a weight $w: \IrrQG \to (0,\infty)$ on the dual of $\QG$ the space $(A(\QG, w), \DcoP_*)$ becomes a completely contractive Banach algebra.

\begin{defn}\label{def-weights}
For a weight $w$ on the dual of $\mathbb{G}$ we call $(A(\QG, w), \DcoP_*)$ a (central) Beurling-Fourier algebra on $\mathbb{G}$.
\end{defn}

The following are examples of weights.

\begin{ex} \label{ex-central-weights}
	Let $\QG$ be a quantum group such that $\IrrQG$ is finitely generated and let $\tau$ be the word length function $\tau$ associated to a fixed finite generating set $S\subseteq \IrrQG$, i.e. for $\pi \in \IrrQG$ we have
		$$\tau(\pi) = \min\{N: \pi \subseteq \sigma_1 \tp \cdots \tp \sigma_N,\; \sigma_j \in S\}.$$
	\begin{enumerate}
		
		\item For $\alpha \ge 0$ we define the polynomially growing weights $w_\alpha$ given by
			$$w_\alpha(s) := (1+\tau(s))^\alpha,\; s\in \IrrQG.$$
			
		\item For $\beta \ge 1$ we define the exponentially growing weights $w_\beta$ given by
			$$w_\beta(s) := \beta^{\tau(s)},\; s\in \IrrQG.$$
	\end{enumerate}
\end{ex}

The weights defined are bounded below by one, so we have $A(\mathbb{G},w)\subseteq A(\mathbb{G})$ contractively. Now using the embedding \eqref{eq-embed-A(G,w)} and the same argument as \eqref{eq-OS-knowledge} we get the following.

\begin{prop}\label{prop-cb-norm-weighted}
Let $\varphi: {\rm Pol}(\QG) \to B(H)$ be a homomorphism for some Hilbert space $H$ and $v_\varphi = (v^{(s)})_{s\in \IrrQG} \in \prod_{s\in \IrrQG} (M_{n_s}\otimes B(H))$ be the associated element as in \eqref{element-v}. Then the transferred homomorphism $\tilde{\varphi}=\varphi \circ \F: c_{00}(\widehat{\QG}) \to B(H)$ extends to a completely bounded map $\tilde{\varphi}: A(\QG, w) \to B(H)$ if and only if $v_\varphi = (v^{(s)})_s \in \prod_{s\in \IrrQG} (M_{n_s}\otimes B(H))$ satisfies the following norm condition:
$$||\tilde{\varphi}||_{cb} = \sup_{s\in \IrrQG} w(s)^{-1}||v^{(s)}||_{M_{n_s}\otimes B(H)}<\infty.$$
\end{prop}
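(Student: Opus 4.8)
The plan is to reduce the weighted statement to the unweighted one via the isometry $\Phi_*: A(\QG,w) \to A(\QG)$ already constructed. First I would observe that the composition $\tilde\varphi \circ \Phi_*^{-1}$ is exactly the transferred homomorphism associated to a \emph{rescaled} element: since $\Phi_*$ sends $X\cdot\widehat h_R$ to $(XW)\cdot\widehat h_R$, the map $\tilde\varphi$ extends to a completely bounded map on $A(\QG,w)$ if and only if $\tilde\varphi \circ (\Phi_*)^{-1}$ extends to a completely bounded map on $A(\QG)$, with equal cb-norms because $\Phi_*$ is a complete isometry. Unwinding the Fourier transform identification, $\tilde\varphi\circ(\Phi_*)^{-1}$ corresponds on $c_{00}(\QDG)$ to $X \mapsto \tilde\varphi(W^{-1}X)$, i.e. to the element $v_\varphi (W^{-1}\otimes 1) = (w(s)^{-1} v^{(s)})_{s\in\IrrQG}$ inside $\prod_s (M_{n_s}\otimes B(H))$. (One has to be slightly careful here: $W^{-1}$ is a central element of the multiplier algebra, not of $c_{00}(\QDG)$ itself, so strictly this is the statement that $\Phi_*$ intertwines the two embeddings \eqref{eq-embed-A(G)} and \eqref{eq-embed-A(G,w)} up to the scaling by $W$, which is exactly what the definition of $\Phi_*$ in the excerpt records.)

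Next I would invoke the unweighted criterion \eqref{eq-OS-knowledge}: the transferred map associated to an element $u = (u^{(s)})_s \in \prod_s(M_{n_s}\otimes B(H))$ extends to a completely bounded map $A(\QG)\to B(H)$ precisely when $u \in VN(\QG)\,\bar\otimes\, B(H)$, equivalently when $\sup_{s\in\IrrQG}\|u^{(s)}\|_{M_{n_s}\otimes B(H)}<\infty$, and in that case the cb-norm equals that supremum. Applying this to $u = (w(s)^{-1}v^{(s)})_s$ immediately yields that $\tilde\varphi$ extends to a completely bounded map $A(\QG,w)\to B(H)$ if and only if $\sup_{s\in\IrrQG} w(s)^{-1}\|v^{(s)}\|_{M_{n_s}\otimes B(H)}<\infty$, with the cb-norm equal to this quantity. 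This is the claimed formula.

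The only genuinely delicate point — and the step I would expect to require the most care — is the bookkeeping in the first paragraph: verifying that, under the Fourier-transform identifications, precomposing with $\Phi_*^{-1}$ really does amount to multiplying $v_\varphi$ on the $M_{n_s}$-leg by $w(s)^{-1}$, and that the completely isometric nature of $\Phi_*$ is genuinely available here (this uses that the operator space structure on $VN(\QG,w^{-1})$ was \emph{defined} to make $\Phi$ a complete isometry, as stated in the excerpt, so that $A(\QG,w)=VN(\QG,w^{-1})_*$ inherits the correct operator space structure and $\Phi_*$ is a complete isometry at the predual level). Everything after that is a direct quotation of \eqref{eq-OS-knowledge}. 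Alternatively, and perhaps more transparently, one can bypass $\Phi_*$ entirely and argue directly: repeat verbatim the computation preceding \eqref{eq-OS-knowledge}, pairing $\tilde\varphi$ with $X\cdot\widehat h_R \in A(\QG,w)$ to get $\langle X\cdot\widehat h_R, v_\varphi\rangle$, and then note that the predual norm on $A(\QG,w)$ is computed against the unit ball of $VN(\QG,w^{-1})$, whose elements are of the form $(w(s)Z_s)_s$ with $\sup_s\|Z_s\|\le 1$; this forces the relevant norm of $v_\varphi$ as a functional to be $\sup_s w(s)^{-1}\|v^{(s)}\|$, and the matrix-level (complete boundedness) version follows by the same argument applied to $M_k(A(\QG,w))$ using $CB(M_*,N)\cong M\,\bar\otimes\,N$.
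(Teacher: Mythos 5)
Your proposal is correct and takes essentially the same route as the paper: the paper's proof consists precisely of repeating the duality computation behind \eqref{eq-OS-knowledge} with the weighted embedding \eqref{eq-embed-A(G,w)} and the identification $CB(M_*,N)\cong M\,\bar\otimes\,N$, which is exactly your ``direct'' alternative, while your primary reduction via the complete isometry $\Phi_*$ is only a repackaging of the same weight-scaling. Nothing is missing.
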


\section{The Gelfand spectrum of Beurling-Fourier algebra and complexification}\label{sec-characters}
	
	We begin with the first scenario, namely investigating the Gelfand spectrum of the Beurling-Fourier algebra $A(\QG,w)$. We expect a connection with some complexification of $\QG$, which we describe below.
	
	\begin{defn}
	We define $\widetilde{\mathbb{G}}$ to be the Gelfand spectrum ${\rm Spec}\, A(\mathbb{G})$ of the Banach algebra $A(\mathbb{G})$, i.e. the space of all non-zero homomorphisms from the Banach algebra $A(\mathbb{G})$ to $\mathbb{C}$, and $\widetilde{\mathbb{G}}_\mathbb{C}$ to be ${\rm Spec}\,{\rm Pol}(\mathbb{G})$, the space of all non-zero homomorphisms from the algebra ${\rm Pol}(\mathbb{G})$ to $\mathbb{C}$. We define the group multiplications on $\widetilde{\mathbb{G}}$ and $\widetilde{\mathbb{G}}_\Comp$ by the multiplications of $VN(\QG)$ and $\prod_{s\in \IrrQG} M_{n_s}$, respectively. The groups $\widetilde{\mathbb{G}}$ and $\widetilde{\mathbb{G}}_\Comp$ can be regarded as topological groups with the weak$^*$-topologies on $VN(\QG)$ and $\prod_{s\in \IrrQG} M_{n_s}$, respectively.
	\end{defn}

\begin{rem}\label{rem-abstract-Lie}
	\begin{enumerate}
	
		\item In \cite[Chapter 3]{neshveyev+tuset} the groups $\widetilde{\mathbb{G}}$ and $\widetilde{\mathbb{G}}_\mathbb{C}$ are denoted by $H^1(\QDG, \tor)$ and $H^1(\QDG, \Comp^*)$, respectively.  For a compact connected Lie group $\QG = G$ we actually know that $\widetilde{\mathbb{G}}_\mathbb{C}$ is the Lie group complexification of $G$. See \cite[III. 8]{BtD} or \cite[Theorem 3.2.2]{neshveyev+tuset} for example.
		
		\item In other words, we can write
			$$\widetilde{\QG} = \{ X \in VN(\QG): \DcoP(X) = X\otimes X, \; X\ne 0\}$$
		and
			$$\widetilde{\mathbb{G}}_\mathbb{C} = \{ X \in \prod_{s\in \IrrQG}M_{n_s}: \DcoP(X) = X\otimes X, \; X\ne 0\},$$
		where $\DcoP$ is the extended co-multiplication from \eqref{eq-Dual-comultiplication}. An observation right after \cite[Definition 3.1.3]{neshveyev+tuset} actually tells us that we may replace the condition $X\ne 0$ by invertibility in the above, i.e. we have
			$$\widetilde{\mathbb{G}}_\mathbb{C} = \{ X \in \prod_{s\in \IrrQG}M_{n_s}: \DcoP(X) = X\otimes X, \; X\; \text{invertible}\}.$$
		For the case of $\widetilde{\QG}$ we may even use unitarity, i.e. we have
			$$\widetilde{\QG} = \{ X \in VN(\QG): \DcoP(X) = X\otimes X, \; X\; \text{unitary}\}.$$
		
		\item The group $\widetilde{\QG}$ has been already defined and investigated in \cite{KalNeu} for a general locally compact quantum group $\QG$. The definition of $\widetilde{\QG}$ in this paper, however, is given in an equivalent form. See Proposition \ref{prop-max-classical-subgroup} below for more properties of $\widetilde{\QG}$.
		
		\item In \cite{McK} K. McKennon developed an {\it abstract Lie theory} for general, possibly non-Lie, compact groups. Recall that if $G$ is a compact group, then the Tannaka-Krein duality (\cite[(30.5)]{HeRo}) tells us that
	$G \cong \{X \in VN(G): \DcoP_G (X) = X\otimes X,\; X\ne 0\}.$
Motivated from this McKennon defined $G_\Comp$, $\mathfrak{g}_\Comp$ and $\mathfrak{g}$ as follows.
	\begin{align*}
		G_\Comp &:= \{X \in \prod_{s\in {\rm Irr}(G)}M_{n_s}: \DcoP_G (X) = X\otimes X,\; X\ne 0\},\\
		\mathfrak{g}_\Comp &:= \{ x\in \prod_{s\in {\rm Irr}(G)}M_{n_s}: \DcoP_G (x) = x\otimes I + I \otimes x,\; x\ne 0\},\\
		\mathfrak{g} &:= \{ x\in \mathfrak{g}_\Comp: x = -x^*\}
	\end{align*} 
It has been shown (\cite[Proposition 3]{CMc}) that $x\in \mathfrak{g}_\Comp$ (resp. $\mathfrak{g}$) if and only if $\exp(tx) \in G_\Comp$ (resp. $G$) for all $t\in \Real$, where $\exp: \mathfrak{g}_\Comp \to G_\Comp \subseteq \prod_{s\in {\rm Irr}(G)}M_{n_s}$ is the exponential map for $G_\Comp$, which actually becomes matrix exponential map when we go down to each component level, namely, $M_{n_s}$. Moreover, it also has been shown (\cite[Proposition 3, Corollary 1]{CMc}) that for any continuous homomorphism $\varphi: \Real \to G_\Comp$ (resp. $\psi: \Real \to G$) there is $y\in \mathfrak{g}_\Comp$ (resp. $\mathfrak{g}$) such that $\varphi(t) = \exp(ty)$. Finally, we have a Cartan type decomposition (\cite[Corollary 1, Theorem 3]{McK})
	\begin{equation}\label{eq-Cartan}
	G_\Comp \cong G \cdot \exp(i\mathfrak{g}),
	\end{equation}
	where the corresponding identification map is a homeomorphic group isomorphism.
		
		\item Note that for a noncommutative C$^*$-algebra $A$, its Gelfand spectrum $${\rm Spec}\,A=\{\mbox{characters on }A\}$$ is only a subset of its C$^*$-algebraic spectrum $${\rm sp}\,A=\{\text{irred.\ $*$-representations of $A$ modulo unitary equivalence}\}.$$

	\end{enumerate}	
\end{rem}

The group $\widetilde{\QG}$ can be understood as a maximal classical closed quantum subgroup of $\QG$.

\begin{prop}\label{prop-max-classical-subgroup}
The topological group $\widetilde{\QG}$ is a compact group. Moreover, it is a maximal classical closed quantum subgroup of $\QG$ in the sense that for any classical closed quantum subgroup $H$ of $\QG$ we have an injective continuous group homomorphism from $H$ into $\widetilde{\QG}$.
\end{prop}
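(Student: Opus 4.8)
The plan is to verify the three assertions — $\widetilde{\QG}$ is a group, it is compact, and it receives every classical closed quantum subgroup — in that order, using the description from Remark \ref{rem-abstract-Lie}(2), namely $\widetilde{\QG} = \{X \in VN(\QG) : \DcoP(X) = X\otimes X,\ X \text{ unitary}\}$, together with the embedding picture from \eqref{eq-embedding}--\eqref{eq-restriction-embedding}. First I would check that $\widetilde{\QG}$ is closed under multiplication and inversion: if $X, Y$ are unitaries with $\DcoP(X) = X\otimes X$ and $\DcoP(Y) = Y\otimes Y$, then $\DcoP(XY) = \DcoP(X)\DcoP(Y) = (X\otimes X)(Y\otimes Y) = XY\otimes XY$ since $\DcoP$ is multiplicative, and $XY$ is unitary; similarly $\DcoP(X^*) = \DcoP(X)^* = X^*\otimes X^*$ (note $X^* = X^{-1}$). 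Here one must be slightly careful: $\DcoP$ on the product $\prod_s M_{n_s}$ is the extended map \eqref{eq-Dual-comultiplication}, but it is still an algebra and $*$-homomorphism, so these manipulations are legitimate. The identity element is $I = (I_{n_s})_s$, which lies in $\widetilde{\QG}$. So $\widetilde{\QG}$ is a group, and with the weak$^*$-topology on $VN(\QG)$ it is a topological group (multiplication is separately weak$^*$-continuous on bounded sets, and all elements have norm one, so on $\widetilde{\QG}$ it is a topological group).

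Next, compactness. The set $\widetilde{\QG}$ consists of unitary group-like elements of $VN(\QG)$, hence sits inside the unit ball of $VN(\QG)$, which is weak$^*$-compact by Banach--Alaoglu. So it suffices to show $\widetilde{\QG}$ is weak$^*$-closed. The condition "$X$ unitary" together with "$\DcoP(X) = X \otimes X$" — I would argue the set of $X$ in the unit ball satisfying $\DcoP(X) = X\otimes X$ is weak$^*$-closed, because on the unit ball $X \mapsto \DcoP(X)$ and $X\mapsto X\otimes X$ are both weak$^*$-continuous into $VN(\QG\times\QG)$ (the latter is a bounded bilinear-type map restricted to a bounded set), and then intersecting with the weak$^*$-closed set of contractions that are co-isometries/isometries (equivalently, use that on the unit ball, $X$ with $X^*X = XX^* = I$ forms a weak$^*$-closed set, or better, observe a group-like contraction is automatically unitary). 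Actually the cleanest route: by the observation quoted from \cite[Definition 3.1.3]{neshveyev+tuset}, a nonzero group-like element of $VN(\QG)$ is automatically unitary, so $\widetilde{\QG} = \{X \in VN(\QG) : \DcoP(X) = X\otimes X,\ X \neq 0\}$, and a weak$^*$-limit of group-like elements is group-like; the limit is nonzero because applying the counit $\widehat{\varepsilon}$ (evaluation giving the scalar $1$ on each such $X$) is weak$^*$-continuous and sends each element to $1$. This gives weak$^*$-closedness, hence compactness.

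Finally, for a classical closed quantum subgroup $H$ of $\QG$: being classical means $VN(H) = VN(\widehat{H})$-side is the group von Neumann algebra of an honest locally compact group $H$, and by \eqref{eq-embedding} there is a normal unital $*$-homomorphism $\iota : VN(H) \to VN(\QG)$ intertwining the coproducts, $\DcoP_{\QDG}\circ\iota = (\iota\otimes\iota)\circ\DcoP_{\QDH}$. Since $H$ is a classical (compact, as $\widetilde{\QG}$ will force — or at least its image is) group, each $g \in H$ gives a group-like unitary $\lambda_g \in VN(H)$ with $\DcoP_{\QDH}(\lambda_g) = \lambda_g\otimes\lambda_g$; then $\iota(\lambda_g)$ is a unitary in $VN(\QG)$ with $\DcoP_{\QDG}(\iota(\lambda_g)) = (\iota\otimes\iota)(\lambda_g\otimes\lambda_g) = \iota(\lambda_g)\otimes\iota(\lambda_g)$, so $\iota(\lambda_g) \in \widetilde{\QG}$. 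The map $g \mapsto \iota(\lambda_g)$ is a group homomorphism (since $g\mapsto \lambda_g$ is and $\iota$ is multiplicative), continuous for the relevant topologies (normality of $\iota$ gives weak$^*$-continuity, and $g\mapsto\lambda_g$ is weak$^*$-continuous), and injective because $\iota$ is injective (it is normal, unital, and dual to the surjection $R_H$; injectivity of such $\iota$ is part of the closed-quantum-subgroup package, cf.\ \cite{Tomatsu}) and $g\mapsto\lambda_g$ is injective.

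The main obstacle I expect is the compactness argument — specifically, making precise that $\widetilde{\QG}$ is weak$^*$-closed, because $\DcoP$ is only known a priori to be normal on $VN(\QG)$ but we are using its extension to the full product $\prod_s M_{n_s}$ and must confirm weak$^*$-continuity of $X \mapsto X\otimes X$ on the unit ball (this is not linear, so one genuinely uses boundedness: on a bounded set, $(X,Y)\mapsto X\otimes Y$ is separately weak$^*$-continuous and a limit argument with nets works because the net is bounded). A secondary subtlety is ensuring the weak$^*$-limit is nonzero, handled by the counit as above. Everything else is routine algebra with $*$-homomorphisms and coproducts.
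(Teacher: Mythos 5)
Your compactness argument and your maximality argument essentially reproduce the paper's proof: the paper also takes a bounded net of (unitary) group-like elements, extracts a weak$^*$-convergent subnet by Alaoglu, notes the limit is group-like, and rules out the zero limit by pairing with the unit $1_0$ of $A(\QG)$ (which is exactly your counit argument, since the unit of $A(\QG)$ is the counit of $\QDG$ and every element of $\widetilde{\QG}$ pairs to $1$ against it); and for maximality it likewise sends $h\in H$ to $\iota(\lambda_H(h))$, using injectivity and normality of $\iota$. Your extra care about weak$^*$-continuity of $X\mapsto X\otimes X$ on bounded sets is fine and only makes explicit what the paper calls ``clear.''

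However, there is a genuine gap: the proposition asserts that $\widetilde{\QG}$ \emph{is} a classical closed quantum subgroup of $\QG$, not merely that it is compact and receives every classical closed quantum subgroup, and you never verify the subgroup property. By the definition used in the paper, this requires exhibiting a surjective $*$-homomorphism from ${\rm Pol}(\QG)$ (equivalently, from $C_u(\QG)$) onto ${\rm Pol}(\widetilde{\QG})$ (resp.\ $C(\widetilde{\QG})$) intertwining the comultiplications. The paper's proof devotes its second paragraph to exactly this: it defines $\pi:C_u(\QG)\to C(\widetilde{\QG})$ by $\pi(a)(Y)=\theta_Y(a)$, where $\theta_Y$ is the character $X\mapsto \la X\cdot \widehat{h}_R, Y\ra$ on ${\rm Pol}(\QG)\cong c_{00}(\QDG)$, and then checks three nontrivial points: (i) $\theta_Y$ is a $*$-character, which uses the antipode relation \eqref{eq-inv} together with $\widehat{S}(Y)=Y^*$ for group-like unitaries $Y$; (ii) $\pi$ is surjective, via the Stone--Weierstrass theorem (the weak$^*$-density of $c_{00}(\QDG)$ in $VN(\QG)$ shows $\pi({\rm Pol}(\QG))$ separates points of $\widetilde{\QG}$) combined with the fact that images of $*$-homomorphisms of $C^*$-algebras are closed; and (iii) $(\pi\otimes\pi)\circ\Delta^u_\QG=\Delta_{\widetilde{\QG}}\circ\pi$, which follows from \eqref{eq-alg-coalg}. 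None of this is automatic from your three steps, and without it the statement ``$\widetilde{\QG}$ is a maximal classical closed quantum subgroup'' is only half proved: you have established the universal (maximality) property but not membership in the class over which maximality is claimed.
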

\begin{proof}
We recall that the group $\widetilde{\QG}$ is equipped with the weak$^*$-topology inherited from $VN(\QG) = A(\QG)^*$.

We first check the compactness of $\widetilde{\QG}$. Let $(X_i)_{i\in \I}$ be a net in $\widetilde{\QG}$. Since each $X_i$ is a unitary, the Alaoglu theorem gives us a subnet $(X_j)$ of $(X_i)_{i\in \I}$ such that $X_j \to X\in VN(\QG)$ in the weak$^*$-topology. It is clear that we have $\DcoP(X) = X\otimes X$. Thus, it is enough to check that $X \ne 0$ to see that $X\in \widetilde{\QG}$. Indeed, for the unit $1_0$ of the algebra $A(\QG)$ we have
	$$\la Y, f \ra = \la Y, f * 1_0 \ra = \la Y, f \ra \cdot \la Y, 1_0 \ra$$
for any $Y\in \widetilde{\QG}$ and $f\in A(\QG)$, which clearly implies that $\la Y, 1_0 \ra = 1$ and consequently $\la X,1_0\ra =1$ and $X\ne 0$.

Secondly, we show that $\widetilde{\QG}$ is actually a quantum closed subgroup of $\QG$. Indeed, it is enough to consider the following $*$-homomorphism.
	\begin{equation}\label{eq-restriction-max-classical}
	\pi: C_u(\QG) \to C(\widetilde{\QG}),\; a\mapsto \pi(a)
	\end{equation}
with $\pi(a)(Y) = \theta_Y(a)$, where $\theta_Y: C_u(\QG) \to \Comp$ is the $*$-homomorphism extended from the $*$-homomorphism $\Theta:{\rm Pol}(\mathbb{G})\cong c_{00}(\QDG) \to \Comp,\; X\mapsto \la X\cdot \widehat{h}_R, Y\ra$ by the universality of $C_u(\QG)$.
Indeed, we have $\Theta(X^\star) = \la (\widehat{h}_R\cdot X^*)\circ \widehat{S}, Y\ra = \widehat{h}_R(X^*\widehat{S}(Y)) = \widehat{h}_R(X^*Y^*) = \overline{\la X\cdot \widehat{h}_R, Y\ra}$, where we used the fact that $\widehat{S}(Y) = Y^*$ since $\DcoP(Y) = Y\otimes Y$. For the surjectivity of $\pi$ we consider the unital $*$-subalgebra $\pi({\rm Pol}(\QG))$ of $C(\widetilde{\QG})$. The weak$*$-density of $c_{00}(\QDG)$ in $VN(\QG)$ tells us that the algebra $\pi({\rm Pol}(\QG))$ separates points in $\widetilde{\QG}$, which gives us the density of $\pi({\rm Pol}(\QG))$ in $C(\widetilde{\QG})$ by the Stone-Weierstrass theorem. Since the images of $*$-homomorphisms on $C^*$-algebras are always closed, we get the surjectivity of $\pi$. The equality $(\pi\otimes \pi)\circ \Delta^u_{\QG} = \Delta_{\widetilde{\QG}} \circ \pi$ on $\PolQG$ is direct from \eqref{eq-alg-coalg} and we can apply density and continuity for the whole algebra $C_u(\QG)$.

For the last statement we consider a classical closed quantum subgroup $H$ of $\QG$. Then we have the injective $*$-homomorphism $\iota: VN(H) \to VN(\QG)$. For $h\in H$ we have $\lambda_H(h) \in VN(H)$ satisfying $\DcoP_H(\lambda_H(h)) = \lambda_H(h) \otimes \lambda_H(h)$. Thus, we have $\iota(\lambda_H(h))$ is a group-like element in $VN(\QG)$. Finally, we have $H \to \widetilde{\QG},\; h\mapsto \iota(\lambda_H(h))$ is the continuous injective homomorphism we wanted.
\end{proof}

\begin{rem}
Note that Proposition \ref{prop-max-classical-subgroup} is already known by \cite[Remark 4.3 (2)]{DKSS}. We included the proof for the sake of simpler arguments available for compact cases. Note also that the paper \cite{McK} deals with the group $\widetilde{\QG}_\Comp$ only for the case of classical compact groups.
\end{rem}

Now we need to compare the group $\widetilde{\QG}_\Comp$ with $(\widetilde{\mathbb{G}})_\mathbb{C}$, the abstract complexification of $\widetilde{\mathbb{G}}$.

	\begin{thm}\label{thm-max-group}
	The group $\widetilde{\mathbb{G}}_\mathbb{C}$ is isomorphic to the abstract complexification $(\widetilde{\mathbb{G}})_\mathbb{C}$ of $\widetilde{\mathbb{G}}$ as topological groups.
	\end{thm}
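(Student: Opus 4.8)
The plan is to realize the isomorphism as the map dual to the restriction homomorphism attached to the closed quantum subgroup $\widetilde{\QG}\subseteq\QG$ of Proposition \ref{prop-max-classical-subgroup}, and then to identify its range through a polar/Cartan decomposition of group-like elements inside $\prod_{s\in\IrrQG}M_{n_s}$.

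By Proposition \ref{prop-max-classical-subgroup} the maximal classical subgroup $\widetilde{\QG}$ is a closed quantum subgroup, so \eqref{eq-restriction-max-classical} gives a surjective Hopf-$*$-algebra map $R:{\rm Pol}(\QG)\to{\rm Pol}(\widetilde{\QG})$ intertwining the coproducts, with dual embedding $\iota:VN(\widetilde{\QG})\hookrightarrow VN(\QG)$ -- which extends to $\prod_{t}M_{n_t}\to\prod_{s}M_{n_s}$ -- satisfying \eqref{eq-restriction-embedding}. I would define
\[
\Psi:(\widetilde{\QG})_\Comp={\rm Spec}\,{\rm Pol}(\widetilde{\QG})\longrightarrow{\rm Spec}\,{\rm Pol}(\QG)=\widetilde{\QG}_\Comp,\qquad \chi\mapsto\chi\circ R.
\]
Then $\Psi$ is injective since $R$ is onto; it is a group homomorphism since $R$ intertwines the coproducts and the group laws on both spectra are convolution, equivalently the multiplication inherited from $\prod_s M_{n_s}$ (cf.\ \eqref{eq-alg-coalg}); and it is a homeomorphism onto its image for a soft reason, namely both spectra carry the topology of pointwise evaluation, so $\chi\mapsto\Psi(\chi)(a)=\chi(R(a))$ is continuous, while writing $b\in{\rm Pol}(\widetilde{\QG})$ as $b=R(a)$ gives $\Psi^{-1}(\phi)(b)=\phi(a)$, continuous in $\phi$. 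Finally, by \eqref{eq-restriction-embedding}, $\Psi$ corresponds on the level of group-like elements to $\iota$, and in particular $\Psi$ restricts to the identity on the common compact subgroup $\widetilde{\QG}$. Everything thus reduces to surjectivity of $\Psi$.

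For surjectivity I would use the polar decomposition. Fixing, via Remark \ref{rem-abstract-Lie}(2), an invertible $X=(X_s)_s$ with $\DcoP(X)=X\otimes X$, write $X_s=U_sP_s$ in each $M_{n_s}$ and set $U=(U_s)$, $P=(P_s)$. Since the polar decomposition of an invertible matrix is unique and compatible with tensor products ($(A\otimes B)=(U_A\otimes U_B)(P_A\otimes P_B)$), with direct sums, and with conjugation by unitaries, and since by Proposition \ref{prop-fusion} the unitary realizing $\DcoP(Y)(t_1,t_2)\cong\bigoplus_{u^{(s)}\subseteq u^{(t_1)}\tp u^{(t_2)}}Y_s$ is the one decomposing $u^{(t_1)}\tp u^{(t_2)}$ itself, applying the polar decomposition coordinatewise to $\DcoP(X)=X\otimes X$ forces $\DcoP(U)=U\otimes U$ and $\DcoP(P)=P\otimes P$. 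Hence $U,P$ are group-like; $U$ is unitary and bounded, so $U\in\widetilde{\QG}=\Psi(\widetilde{\QG})$. For $P$, set $h:=\log P$ (self-adjoint); taking logarithms in $\DcoP(P)=P\otimes P$ yields $\DcoP(h)=h\otimes 1+1\otimes h$, so $h$ is self-adjoint and primitive, and therefore $\exp(ith)$ is a unitary group-like element, i.e.\ lies in $\widetilde{\QG}$, for every $t\in\Real$; since $\|\exp(ith_s)\|=1$ and each coordinate is continuous in $t$, dominated convergence makes $t\mapsto\exp(ith)$ a continuous one-parameter subgroup of the compact group $\widetilde{\QG}$. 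Applying McKennon's abstract Lie theory to $\widetilde{\QG}$ (Remark \ref{rem-abstract-Lie}(4)) gives $y$ in the Lie algebra $\mathfrak{k}$ of $\widetilde{\QG}$ with $\exp(ith)=\exp(ty)$, computed via $\iota$ inside $\prod_s M_{n_s}$; differentiating at $t=0$ gives $ih=\iota(y)$, so $P=\exp(h)=\iota(\exp(i(-y)))$ with $i(-y)\in i\mathfrak{k}$. By the Cartan decomposition \eqref{eq-Cartan} for $\widetilde{\QG}$ we have $\exp(i(-y))\in(\widetilde{\QG})_\Comp$, and since $\Psi$ is $\iota$ on group-like elements, $P=\Psi(\exp(i(-y)))$. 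Hence $X=UP=\Psi(U\exp(i(-y)))$ lies in the range of $\Psi$.

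The main obstacle is the polar-decomposition step: showing that the polar components of a group-like element remain group-like. This hinges on the precise form of the coproduct $\DcoP$ on $VN(\QG)$ given by Proposition \ref{prop-fusion}, in particular on the assertion there that the implementing unitary is the very one appearing in the decomposition of $u^{(t_1)}\tp u^{(t_2)}$; the remaining work is the bookkeeping required to carry McKennon's structure theory for the compact group $\widetilde{\QG}$ through the embedding $\iota$ into $\prod_{s\in\IrrQG}M_{n_s}$.
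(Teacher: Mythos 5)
Your proposal is correct and follows essentially the same route as the paper's proof: polar decomposition $X=U|X|$ of a group-like element with both factors again group-like, the one-parameter subgroup $t\mapsto |X|^{it}$ in $\widetilde{\QG}$, McKennon's abstract Lie theory producing $y\in\widetilde{\mathfrak{g}}$, and the Cartan decomposition \eqref{eq-Cartan} to identify $\widetilde{\QG}_\Comp$ with $(\widetilde{\QG})_\Comp$. The differences are only in packaging: you realize the isomorphism as precomposition with the restriction map and get group-likeness of $U$ and $P$ from uniqueness of the polar decomposition, while the paper argues via $X^*X$ and functional calculus and matches the two Cartan decompositions directly.
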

\begin{proof}
Recall the map $\pi$ from \eqref{eq-restriction-max-classical} characterizing closed quantum subgroup structure of $\widetilde{\QG}$. Then it is straightforward to check that $(id\otimes \pi)\mathbb{U}_\QG$ is a vector-valued function on $\widetilde{\QG}$ given by
	$$(id\otimes \pi)\mathbb{U}_\QG(V) = \oplus_{s\in \IrrQG} V_s$$
for $V = (V_s)_{s\in \IrrQG} \in \widetilde{\QG} \subseteq VN(\QG)$. Now, the equation \eqref{eq-restriction-embedding} tells us that the associated embedding $\iota: VN(\widetilde{\QG}) \to VN(\QG)$ (as in \eqref{eq-embedding}) is actually given by $\iota(\lambda_{\widetilde{\QG}}(V)) = V$ for $V = (V_s)_{s\in \IrrQG}\in \widetilde{\QG}$.

We will now establish a Cartan type decomposition for $\widetilde{\mathbb{G}}_\mathbb{C}$ using a standard argument. We pick any $X \in \widetilde{\QG}_\Comp$, then the polar decomposition $X = U |X|$ also satisfies that $U, |X| \in \widetilde{\QG}_\Comp$. Indeed, we know that $X^*X \in \widetilde{\QG}_\Comp$, so that $|X| \in \widetilde{\QG}_\Comp$ via functional calculus. Since $X$, and consequently $|X|$, are invertible ((2) of Remark \ref{rem-abstract-Lie}) we also get $U\in \widetilde{\QG}_\Comp$.
Since $U$ is unitary we actually have $U \in \widetilde{\QG}$. Moreover, for any $z\in \Comp$ we have $\DcoP_\QG |X|^z = |X|^z \otimes |X|^z$ by functional calculus. In particular, $\DcoP_\QG |X|^{it} = |X|^{it} \otimes |X|^{it}$ for all $t\in \Real$,
which means that we have a continuous homomorphism from $\Real$ to $\widetilde{\QG}$, namely, $t\mapsto |X|^{it}$.
By (4) of Remark \ref{rem-abstract-Lie} we have an element $y$ in the abstract Lie algebra $\widetilde{\mathfrak{g}}$ of $\widetilde{\QG}$ such that $|X|^{it} = \exp_{\widetilde{\QG}}(-ty)$, $t\in \Real$ as elements in $\widetilde{\QG}\subseteq VN(\widetilde{\QG})$, where $\exp_{\widetilde{\QG}}$ is the exponential map from $\widetilde{\mathfrak{g}}$ to $\widetilde{\QG}\subseteq VN(\widetilde{\QG})$.
From the observation we had in the begining of this proof we actually get
	$$|X|^{it} = \iota(\lambda_{\widetilde{\QG}}(\exp_{\widetilde{\QG}}(-ty)),\; t\in \Real$$ as elements in $VN(\QG)$. By the uniqueness of analytic functions we have $|X|^z = \iota(\lambda_{\widetilde{\QG}}(\exp_{\widetilde{\QG}}(izy))$ for any $z\in \Comp$, so that we get $|X| = \iota(\lambda_{\widetilde{\QG}}(\exp_{\widetilde{\QG}}(iy))$ for $z=1$. Consequently, we have
	$$X = U|X| = U \cdot  \iota(\lambda_{\widetilde{\QG}}(\exp_{\widetilde{\QG}}(iy))$$
with $U \in \widetilde{\QG}_\Comp$ and $y\in \widetilde{\mathfrak{g}}$. From the invertibility of $X$ we know that $U$ and $y$ are uniquely determined by $X$.
Moreover, the maps $X\mapsto |X|$ and $X\mapsto U = X|X|^{-1}$ are clearly continuous. Since the map $\widetilde{\mathfrak{g}} \to (\widetilde{\QG})_\Comp,\; y\mapsto \exp_{\widetilde{\QG}}(iy)$ is a homeomorphism onto its image by \eqref{eq-Cartan}, we actually get a homeomorphism
	$$\widetilde{\QG}_\Comp \cong \widetilde{\QG} \times \widetilde{\mathfrak{g}},\; X \mapsto (U, y).$$
Combining with the homeomorphism
	$$(\widetilde{\QG})_\Comp \cong \widetilde{\QG} \times \widetilde{\mathfrak{g}},\; \iota(\lambda_{\widetilde{\QG}}(U)) \exp_{\widetilde{\QG}}(iy) \mapsto (U, y)$$
from \eqref{eq-Cartan} we can conclude that the map $X \in \widetilde{\QG}_\Comp \mapsto \iota(\lambda_{\widetilde{\QG}}(U)) \exp_{\widetilde{\QG}}(iy) \in (\widetilde{\QG})_\Comp$ is a homeomorphic group isomomorphism.
\end{proof}

\begin{rem}
From the definition we have ${\rm Spec}A(\QG, w)\subseteq {\rm Spec}\,{\rm Pol}(\mathbb{G}) =  \widetilde{\QG}_\Comp$. The above Theorem \ref{thm-max-group} allows us to use Beurling-Fourier algebras to {\it detect} the structure of the complexification of the maximal classical closed subgroup of $\QG$, which will be demonstrated in the following subsections.
\end{rem}

In the following subsections we will consider several concrete examples.

\subsection{The case of free orthogonal quantum group $O_F^+$}

We recall the definition of the universal (or free) orthogonal compact quantum group $O_F^+$ from \cite{VanWa} for $F\in GL_n(\mathbb{C})$ with $F\bar{F}=\pm I$. The $C^*$-algebra $C(O_F^+)$ is the universal $C^*$-algebra with the generators $U = (u_{ij})^n_{i,j=1}$ satisfying the relations 
	\begin{equation}\label{eq-rel-orth}
	\begin{cases}UU^* = U^*U = I,\\ UF = F\overline{U},\end{cases}
	\end{equation}
where $\overline{U} = (u^*_{ij})^n_{i,j=1}$, so that we clearly have $C(O_F^+) = C_u(O_F^+)$. The co-multiplication on $C(O_F^+)$ is determined by $\Delta(u_{ij}) = \sum^n_{k=1}u_{ik}\otimes u_{kj}$, $1\le i,j\le n$. The fusion rule for $O_F^+$ is the same as the one of $SU(2)$ (\cite{ban-onplus}), which means that ${\rm Irr}(O_F^+) = \frac{1}{2}\z_+$, $u^{(0)} = 1$, $u^{(1/2)} = U = (u_{ij})^n_{i,j=1}$ and
	$$u^{(1/2)} \tp u^{(s)} \cong u^{(s+1/2)} \oplus u^{(s-1/2)} \quad\mbox{ for } s\ge \frac{1}{2}.$$
Note that ${\rm Irr}(O_F^+)$ is singly generated by $u^{(1/2)}$ and the above fusion rule tells us that the corresponding word length function $\tau$ is given by
	$$\tau(s) = 2s,\; s\in \frac{1}{2}\z_+.$$
The classical dimension is given by $n_s=U_{2s}(n)$, where $(U_{2s})_{n\ge 0}$ denotes the Chebyshev polynomials of the second kind.

\begin{thm}\label{thm-free-ortho-spec}
Let $\beta\ge 1$.
We have $\widetilde{(O_F^+)}_\Comp \cong \{V\in GL_n(\Comp): VFV^t = F\}$ as topological groups and for the exponential weight $w_\beta$ on the dual of  $C(O_F^+)$ as in Example \ref{ex-central-weights} we have
	$${\rm Spec} A(O_F^+, w_\beta) \cong \{ V\in GL_n(\Comp) : VFV^t = F,\;\; 1\le ||V||_\infty \le \beta\}$$
as topological spaces.
\end{thm}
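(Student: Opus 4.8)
The plan is to prove the two homeomorphisms separately: first the identification of $\widetilde{(O_F^+)}_\Comp={\rm Spec}\,{\rm Pol}(O_F^+)$, which is purely algebraic and weight-free, and then, using the boundedness criterion of Proposition~\ref{prop-cb-norm-weighted}, the description of ${\rm Spec}\,A(O_F^+,w_\beta)$ as a subset of it.

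For the first part I would argue that a character $\chi$ of ${\rm Pol}(O_F^+)$ is determined by the single matrix $V:=(\chi(u_{ij}))_{i,j}\in M_n(\Comp)$, since the relation $UF=F\overline{U}$ forces $\overline{U}=F^{-1}UF$, so every $u_{ij}^*$ is a linear combination of the $u_{kl}$ and $\chi$ is already fixed on all generators of ${\rm Pol}(O_F^+)$. Feeding $V$ (and $\chi(u_{ij}^*)=(F^{-1}VF)_{ij}$) into the relations $UU^*=U^*U=I$ shows, after transposing one of the resulting identities, that $V$ must be invertible with $VFV^t=F$; conversely, since ${\rm Pol}(O_F^+)$ is the universal $*$-algebra on the $u_{ij}$ subject to \eqref{eq-rel-orth}, any such $V$ defines a character. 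Because $\Delta(u_{ij})=\sum_k u_{ik}\otimes u_{kj}$, the map $\chi\mapsto V$ is a group isomorphism onto $\mathcal{O}_F:=\{V\in GL_n(\Comp):VFV^t=F\}$; it is a homeomorphism for the weak$^*$ topology on $\widetilde{(O_F^+)}_\Comp\subseteq\prod_s M_{n_s}$, since the $s=\tfrac12$ block equals $V$ while, by the $O_F^+$ fusion rule, every other block $v^{(s)}:=({\rm id}\otimes\chi)(u^{(s)})$ is a fixed polynomial in the entries of $V$.

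For the second part I would use that ${\rm Pol}(O_F^+)\cong c_{00}(\widehat{O_F^+})$ lies densely in $A(O_F^+,w_\beta)$ as a subalgebra, so ${\rm Spec}\,A(O_F^+,w_\beta)\subseteq\mathcal{O}_F$, and a character $\chi$ attached to $V\in\mathcal{O}_F$ extends to $A(O_F^+,w_\beta)$ exactly when it extends to a bounded (equivalently completely bounded) scalar-valued map there --- the extension being automatically multiplicative by density and continuity of the product. Proposition~\ref{prop-cb-norm-weighted} with $H=\Comp$, together with $\tau(s)=2s$ so that $w_\beta(s)=\beta^{2s}$, reduces this to $\sup_{s\in\frac{1}{2}\z_+}\beta^{-2s}\|v^{(s)}\|_{M_{n_s}}<\infty$. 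To evaluate it I would note that $v_\chi=({\rm id}\otimes\chi)\mathbb{U}=(v^{(s)})_s$ is group-like, so Proposition~\ref{prop-fusion} and the fusion rule $u^{(1/2)}\tp u^{(s)}\cong u^{(s+1/2)}\oplus u^{(s-1/2)}$ give the \emph{unitary} equivalence $V\otimes v^{(s)}\cong v^{(s+1/2)}\oplus v^{(s-1/2)}$ for $s\ge\tfrac12$ (with $v^{(0)}=1$, $v^{(1/2)}=V$); hence $\|v^{(s+1/2)}\|\le\|V\|\,\|v^{(s)}\|$ and, inductively, $\|v^{(s)}\|\le\|V\|^{2s}$. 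For the matching lower bound I would observe that $\det V=\pm1$ (taking determinants in $VFV^t=F$), so $\|V\|\ge|\det V|^{1/n}=1$, and then run an induction on the identity $\max(\|v^{(s+1/2)}\|,\|v^{(s-1/2)}\|)=\|V\|\,\|v^{(s)}\|$ showing at once that $s\mapsto\|v^{(s)}\|$ is nondecreasing and that $\|v^{(s)}\|=\|V\|^{2s}$ for every $s$. Thus $\sup_s\beta^{-2s}\|v^{(s)}\|=\sup_s(\|V\|/\beta)^{2s}$ is finite iff $\|V\|\le\beta$; combined with $\|V\|\ge1$ this identifies ${\rm Spec}\,A(O_F^+,w_\beta)$ with $\{V\in GL_n(\Comp):VFV^t=F,\ 1\le\|V\|_\infty\le\beta\}$, and the identification is a homeomorphism because characters of $A(O_F^+,w_\beta)$ are contractive and ${\rm Pol}(O_F^+)$ is dense, so weak$^*$ convergence of characters amounts to convergence of the associated $V$'s, exactly as in the first part.

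The hard part will be the lower bound $\|v^{(s)}\|\ge\|V\|^{2s}$: the compressions occurring in the fusion decomposition could in principle shrink norms, and ruling that out requires combining the unitarity of the intertwiner in Proposition~\ref{prop-fusion}, the determinant constraint $\|V\|\ge1$, and the monotonicity induction. The remaining ingredients --- the presentation of ${\rm Pol}(O_F^+)$ and the two topology comparisons --- should be routine once Proposition~\ref{prop-cb-norm-weighted} is in hand.
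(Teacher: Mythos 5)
Your proposal follows essentially the same route as the paper's proof: a character is encoded by the single matrix $V=(\chi(u_{ij}))$ with $VFV^t=F$ invertible, Proposition \ref{prop-cb-norm-weighted} with $H=\Comp$ reduces membership in ${\rm Spec}\,A(O_F^+,w_\beta)$ to $\sup_{s}\beta^{-2s}\Vert v^{(s)}\Vert_\infty<\infty$, and the $SU(2)$-type fusion rule gives the recurrence $\Vert V\Vert_\infty\,\Vert v^{(s)}\Vert_\infty=\max\{\Vert v^{(s+1/2)}\Vert_\infty,\Vert v^{(s-1/2)}\Vert_\infty\}$, leading to $\Vert v^{(s)}\Vert_\infty=\Vert V\Vert_\infty^{2s}$ and the stated description. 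The only cosmetic deviations are your derivation of $\Vert V\Vert_\infty\ge 1$ from $\det V=\pm1$ (the paper uses $\Vert F\Vert_\infty\le\Vert V\Vert_\infty\Vert F\Vert_\infty\Vert V^t\Vert_\infty$) and your density/equicontinuity argument for the topological identification (the paper invokes a continuous bijection from a compact space onto a Hausdorff space).
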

\begin{proof}
A non-zero character $\varphi : {\rm Pol}(O_F^+) \to \mathbb{C}$ is determined by two matrices $V = (\varphi(u_{ij}))^n_{i,j=1}$ and $W = (\varphi(u^*_{ji}))^n_{i,j=1}$ in $M_n$. It is easy to see that \eqref{eq-rel-orth} implies that $V$ and $W$ satisfy
\begin{equation}\label{eq-rel-orth2}
\begin{cases}VW = WV = I,\\ VF = FW^t\end{cases} \Leftrightarrow \begin{cases}VW = WV = I,\\ VFV^t = F,\end{cases}
\end{equation}
where $W^t$ refers to the transpose of $W$. Conversely, any matrices $V$ and $W$ satisfying \eqref{eq-rel-orth2} define a non-zero character $\varphi : {\rm Pol}(O_F^+) \to \Comp$. Since $W=V^{-1}$ this gives the first assertion.
Moreover, in this case we know that $I_n \otimes \varphi(u^{1/2}) = V$. Together with the above fusion rule we can easily see that the map $\varphi \mapsto V$ is a homeomorphism between ${\rm Spec} {\rm Pol}(O_F^+)$ and $\{V\in GL_n(\Comp): VFV^t = F\}$ with the subspace topology inherited from the usual topology of $GL_n(\mathbb{C})$.

For the second statement we note that the fusion rule is preserved by the character $\varphi$, so that for any $s\in \frac{1}{2}\z_+$ with $s\ge 1/2$ we get
	$$v^{(1/2)} \otimes v^{(s)} \cong v^{(s+1/2)} \oplus v^{(s-1/2)},$$
where $v\in \prod_{s\in \frac{1}{2}\z_+}M_{n_s}$ is the element corresponding to $\varphi$. Thus, we have the following recurrence relation for the sequence of the norms $||v^{(s)}||_\infty$.
	$$||v^{(1/2)}||_\infty \cdot ||v^{(s)}||_\infty = \max\{ ||v^{(s+1/2)}||_\infty, ||v^{(s-1/2)}||_\infty\}.$$
From \eqref{eq-rel-orth2} we have $||F||_\infty \le ||V||_\infty \cdot ||F||_\infty \cdot ||V^t||_\infty$, which means that
	$$||v^{(1/2)}||_\infty = ||V||_\infty \ge 1.$$
Thus, a simple induction with the observation that $v^{(0)} = \varphi(1) = 1$ tells us
	$$||v^{(s)}||_\infty = ||V||^{2s}_\infty,\;\; s\in \frac{1}{2}\z_+,$$
which gives us the bijection
	$$\varphi \in {\rm Spec} A(O_F^+, w_\beta) \mapsto V\in GL_n(\Comp)\; \text{such that}\; VFV^t = F,\, 1\le ||V||_\infty \le \beta.$$
Continuity of the above map is clear from the 	continuity of the map $\varphi \in {\rm Spec} {\rm Pol}(O_F^+) \mapsto V\in GL_n(\Comp)\; \text{such that}\; VFV^t = F.$ Now we recall that a continous bijection from a compact space into Hausdorff space is a homeomorphism, which gives us the conclusion we wanted.

\end{proof}

\begin{rem}
As in the classical case we can see that
	$$\bigcup_{\beta\ge 1}{\rm Spec}\, A(O_F^+, w_\beta) \cong \widetilde{(O_F^+)}_\Comp$$
as topological spaces.
\end{rem}

When $F = \begin{bmatrix}0 & 1 \\ -q^{-1} & 0\end{bmatrix}$, $-1<q\ne 0<1$ we get $O_F^+ \cong SU_q(2)$ as compact quantum groups. We would like to transfer the above result to the setting of $SU_q(2)$ for later use. Let $\varphi : {\rm Pol}(SU_q(2)) \to \mathbb{C}$ be a non-zero character. By examining the commutation relations it is straightforward to check that
$$\varphi(c_q) = \varphi(c^*_q) = 0\;\; \text{and}\;\; \varphi(a_q) = \rho \in \mathbb{C}\backslash\{0\}.$$
Thus, we get
$$||v^{(s)}||_\infty = \max\{\abs{\rho}, \abs{\rho^{-1}}\}^{2s},\;\; s\in \frac{1}{2}\z_+.$$
	
\begin{thm}\label{thm-SUq(2)-char}
We have $\widetilde{SU_q(2)}_\Comp \cong \mathbb{C}\backslash\{0\}$ as topological groups and for the exponential weight $w_\beta$ on the dual of  $SU_q(2)$ as in Example \ref{ex-central-weights} we have
\begin{align*}
{\rm Spec}\, A(SU_q(2), w_\beta)
& \cong \{\rho \in \Comp\backslash\{0\} : \frac{1}{\beta} \le |\rho| \le \beta\}\\
& \cong \{ V = \begin{bmatrix} \rho & 0 \\ 0 & \rho^{-1}\end{bmatrix}\in M_2(\Comp) : ||V||_\infty\le \beta\}
\end{align*}
as topological spaces.
\end{thm}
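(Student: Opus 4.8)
The plan is to reduce both parts of the statement to Theorem~\ref{thm-free-ortho-spec} via the identification $SU_q(2)\cong O_F^+$ for $F=\begin{bmatrix}0&1\\-q^{-1}&0\end{bmatrix}$ recalled just before the statement (under this identification the fundamental corepresentation, and hence the word-length weight $w_\beta(s)=\beta^{\tau(s)}=\beta^{2s}$, match on both sides), the only genuinely new ingredient being an explicit description of the ``orthogonality group'' attached to $F$.

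First I would make $\widetilde{SU_q(2)}_\Comp\cong\{V\in GL_2(\Comp):VFV^t=F\}$ concrete. Writing $V=\begin{bmatrix}a&b\\c&d\end{bmatrix}$ and expanding $VFV^t=F$ yields the scalar equations $(1-q^{-1})ab=0$, $(1-q^{-1})cd=0$, $ad-q^{-1}bc=1$, $bc-q^{-1}ad=-q^{-1}$; since $-1<q<1$ the first two force $ab=cd=0$, eliminating $ad$ from the last two gives $(1-q^{-2})bc=0$, hence $bc=0$ and then $ad=1$. Thus $a,d\neq 0$, so $b=c=0$ and $V=\begin{bmatrix}\rho&0\\0&\rho^{-1}\end{bmatrix}$ with $\rho=a\in\Comp\backslash\{0\}$; conversely every such diagonal matrix satisfies $VFV^t=F$. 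Hence $\rho\mapsto\begin{bmatrix}\rho&0\\0&\rho^{-1}\end{bmatrix}$ is a continuous group isomorphism $\Comp\backslash\{0\}\to\{V:VFV^t=F\}$ with continuous inverse, giving the first assertion. (This matches the computation preceding the statement: a nonzero character $\varphi$ on ${\rm Pol}(SU_q(2))$ satisfies $\varphi(c_q)=\varphi(c_q^*)=0$, $\varphi(a_q)=\rho$, $\varphi(a_q^*)=\rho^{-1}$, so the matrix assigned to it in Theorem~\ref{thm-free-ortho-spec} is $(id\otimes\varphi)(u^{(1/2)})=\begin{bmatrix}\rho&0\\0&\rho^{-1}\end{bmatrix}$.)

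For the second assertion I would specialize the description of ${\rm Spec}\,A(O_F^+,w_\beta)$ from Theorem~\ref{thm-free-ortho-spec}: for $V=\begin{bmatrix}\rho&0\\0&\rho^{-1}\end{bmatrix}$ one has $\|V\|_\infty=\max\{|\rho|,|\rho^{-1}|\}\geq 1$, so the constraint $1\leq\|V\|_\infty\leq\beta$ there reduces to $\max\{|\rho|,|\rho^{-1}|\}\leq\beta$, i.e. $\tfrac1\beta\leq|\rho|\leq\beta$. (Equivalently one can argue directly with Proposition~\ref{prop-cb-norm-weighted} for $H=\Comp$: using $\|(id\otimes\varphi)(u^{(s)})\|_\infty=\max\{|\rho|,|\rho^{-1}|\}^{2s}$ recorded above, the character $\varphi\leftrightarrow\rho$ extends to a bounded character of $A(SU_q(2),w_\beta)$ iff $\sup_s\beta^{-2s}\max\{|\rho|,|\rho^{-1}|\}^{2s}<\infty$, i.e. iff $\max\{|\rho|,|\rho^{-1}|\}\leq\beta$; since ${\rm Pol}(SU_q(2))$ is dense in $A(SU_q(2),w_\beta)$ this extension is unique, so restriction to ${\rm Pol}(SU_q(2))$ is a bijection from ${\rm Spec}\,A(SU_q(2),w_\beta)$ onto $\{\rho\in\Comp\backslash\{0\}:\tfrac1\beta\leq|\rho|\leq\beta\}$.)

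Finally I would upgrade this bijection to a homeomorphism: $A(SU_q(2),w_\beta)$ is unital (its unit $1_0$ already lies in $c_{00}(\widehat{SU_q(2)})\subseteq A(SU_q(2),w_\beta)$), so its Gelfand spectrum is weak$^*$-compact, and the map sending a character $\Psi$ to the parameter $\rho=\Psi(a_q)$ of its restriction to ${\rm Pol}(SU_q(2))$ — i.e. $\Psi$ evaluated on the element of $A(SU_q(2),w_\beta)$ corresponding to $a_q$ under ${\rm Pol}(SU_q(2))\cong c_{00}(\widehat{SU_q(2)})\hookrightarrow A(SU_q(2),w_\beta)$ — is weak$^*$-to-norm continuous; thus it is a continuous bijection from a compact space onto the Hausdorff annulus $\{\tfrac1\beta\leq|\rho|\leq\beta\}$, hence a homeomorphism. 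The second homeomorphism in the statement is then immediate, since $\rho\mapsto\begin{bmatrix}\rho&0\\0&\rho^{-1}\end{bmatrix}$ maps the annulus homeomorphically onto $\{V=\begin{bmatrix}\rho&0\\0&\rho^{-1}\end{bmatrix}:\|V\|_\infty\leq\beta\}$. I do not expect a real obstacle here: the substantive work (classifying the characters of ${\rm Pol}(SU_q(2))$ and the norm recursion for $\|(id\otimes\varphi)(u^{(s)})\|_\infty$) is already carried out in the text, and what remains is the short matrix computation for $VFV^t=F$ — where $q^2\neq 1$ enters — together with routine bookkeeping about the restriction map and the compact-to-Hausdorff argument.
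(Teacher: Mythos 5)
Your proposal is correct and follows essentially the same route the paper intends: Theorem \ref{thm-SUq(2)-char} is stated without a separate proof precisely because it is the specialization of Theorem \ref{thm-free-ortho-spec} to $F=\begin{bmatrix}0&1\\-q^{-1}&0\end{bmatrix}$ combined with the character computation ($\varphi(c_q)=\varphi(c_q^*)=0$, $\varphi(a_q)=\rho$) and the norm formula $\|v^{(s)}\|_\infty=\max\{|\rho|,|\rho|^{-1}\}^{2s}$ recorded immediately before the statement. Your explicit verification that $\{V\in GL_2(\Comp):VFV^t=F\}$ consists exactly of the matrices $\mathrm{diag}(\rho,\rho^{-1})$, and the compact-to-Hausdorff homeomorphism argument, just fill in details the paper leaves implicit, and they check out.
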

			
\subsection{The case of free unitary quantum group $U_F^+$}

We recall the definition of the compact quantum group $U_F^+$ from \cite{Ban} for $F\in GL_n(\mathbb{C})$. The $C^*$-algebra $C(U_F^+)$ is the universal $C^*$-algebra with the generators $U = (u_{ij})^n_{i,j=1}$ satisfying the relations 
	\begin{equation}\label{eq-rel-unitary}
	\begin{cases}UU^* = U^*U = I,\\ (F\overline{U}F^{-1})^*F\overline{U}F^{-1} = I\end{cases} \Leftrightarrow \begin{cases}UU^* = U^*U = I,\\ U^tF^*F\overline{U} = F^*F,\end{cases}
	\end{equation}
where $U^t = (u_{ji})^n_{i,j=1}$,  so that we clearly have $C(U_F^+) = C_u(U_F^+)$. The co-multiplication is again  determined by $\Delta(u_{ij}) = \sum^n_{k=1}u_{ik}\otimes u_{kj}.$ The fusion rule for $U_F^+$ is more involved than the one for $A_o(F)$. We have ${\rm Irr}(U_F^+) = \mathbb{F}^+_2$, the free semigroup with two generators $\{g_1, g_2\}$ and with unit $e$. Recall that $\mathbb{F}^+_2$ is equipped with an anti-multiplicative involution $g \mapsto \bar{g}$ defined by
	$$\bar{e} = e,\; \bar{g_1} = g_2,\; \bar{g_2} = g_1.$$
Then, we have
	$$ u^{(g)} \tp u^{(h)} \cong \bigoplus_{g=\alpha \sigma,\; h=\bar{\sigma}\beta} u^{(\alpha\beta)},\; g,h\in \mathbb{F}^+_2.$$
If we focus on the case of $h = g_1$, then the above rule becomes
	$$u^{(g)} \tp u^{(g_1)} \cong \begin{cases} u^{(gg_1)} \oplus u^{(gg^{-1}_2)} & \text{ if $g$ ends with $g_2$}\\ u^{(gg_1)} & \text{ if $g$ ends with $g_1$}\end{cases}.$$
Similarly we have
	$$u^{(g)} \tp u^{(g_2)} \cong \begin{cases} u^{(gg_2)} \oplus u^{(gg^{-1}_1)} & \text{ if $g$ ends with $g_1$}\\ u^{(gg_2)} & \text{ if $g$ ends with $g_2$}\end{cases}.$$
Note that $u^{(g_1)}=U$ and $u^{(g_2)} = \overline{U}$, and that $\{u^{(g_1)}, u^{(g_2)}\}$ is the canonical generating set of ${\rm Irr}(A_u(F))$. The above fusion rule tells us that the corresponding word length function $\tau$ is the same as the word length function of $\mathbb{F}^+_2$ using the generating set $\{g_1, g_2\}$.
\begin{thm}
We have $\widetilde{(U_F^+)}_\Comp \cong \{V\in GL_n(\Comp): V(F^*F)^t = (F^*F)^tV\}$ as topological groups and for the exponential weight $w_\beta$ on the dual of  $U_F^+$ as in Example \ref{ex-central-weights} we have
	\begin{align*}
	\lefteqn{{\rm Spec}\, A(U_F^+, w_\beta)}\\
	& \cong \{ V\in GL_n(\Comp) : V(F^*F)^t = (F^*F)^t V, \;\; ||V||_\infty\le \beta,\; ||V^{-1}||_\infty \le \beta\}
	\end{align*}
as topological spaces.
\end{thm}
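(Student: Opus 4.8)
The plan is to adapt the proof of Theorem~\ref{thm-free-ortho-spec} to the unitary case, with the relation $VFV^t=F$ replaced by a commutation relation with $(F^*F)^t$ read off from \eqref{eq-rel-unitary}, and with the $SU(2)$-type fusion replaced by the free fusion of $\mathbb{F}^+_2$. I would first classify the non-zero characters $\varphi\colon{\rm Pol}(U_F^+)\to\Comp$. Such a $\varphi$ is determined by the matrices $V=(\varphi(u_{ij}))_{i,j}$ and $M=(\varphi(u^*_{ij}))_{i,j}$. Evaluating $\varphi$ on $UU^*=U^*U=I$ gives $VM^t=M^tV=I$, so $V$ is invertible and $M=(V^{-1})^t$; evaluating $\varphi$ on $U^tF^*F\,\overline{U}=F^*F$ and substituting $M=(V^{-1})^t$ yields $V^t(F^*F)(V^t)^{-1}=F^*F$, i.e.\ $V^t$ commutes with $F^*F$, equivalently $V$ commutes with $(F^*F)^t$. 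Conversely, any $V\in GL_n(\Comp)$ commuting with $(F^*F)^t$, together with $M:=(V^{-1})^t$, defines such a character. Since $(\varphi_1\varphi_2)(u_{ij})=\sum_k\varphi_1(u_{ik})\varphi_2(u_{kj})=(V_1V_2)_{ij}$, the assignment $\varphi\mapsto V$ turns the group law on $\widetilde{(U_F^+)}_\Comp$ into matrix multiplication, and as $\varphi$ is recovered from $V$ and $(V^{-1})^t$ by polynomial formulas, it is a homeomorphic group isomorphism onto $\{V\in GL_n(\Comp):V(F^*F)^t=(F^*F)^tV\}$ (continuity of the inverse uses that $V\mapsto V^{-1}$ is continuous on $GL_n$). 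This gives the first assertion.

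For the Beurling-Fourier algebra I would appeal to Proposition~\ref{prop-cb-norm-weighted}: $\varphi$ extends to a (completely) bounded functional on $A(U_F^+,w_\beta)$ precisely when $\sup_{g\in\mathbb{F}^+_2}\beta^{-\tau(g)}||v^{(g)}||_\infty<\infty$, where $v_\varphi=(v^{(g)})_g$ with $v^{(g)}=(id\otimes\varphi)(u^{(g)})$ and $\tau$ is the word length on $\mathbb{F}^+_2$. Here $v^{(g_1)}=V$ and $v^{(g_2)}=M=(V^{-1})^t$, so $||v^{(g_1)}||_\infty=||V||_\infty$ and $||v^{(g_2)}||_\infty=||V^{-1}||_\infty$. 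The key ingredient is Proposition~\ref{prop-fusion}: $v_\varphi$ decomposes under $\DcoP$ by the very same unitary intertwiners as $U_F^+$, so that $\max\{||v^{(s)}||_\infty: u^{(s)}\subseteq u^{(t_1)}\tp u^{(t_2)}\}=||v^{(t_1)}||_\infty\,||v^{(t_2)}||_\infty$ for all $t_1,t_2$. Iterating this along a word $g=g_{i_1}\cdots g_{i_k}$, using that $u^{(g)}\subseteq u^{(g_{i_1})}\tp\cdots\tp u^{(g_{i_k})}$ (always take the $\sigma=e$ term in the fusion rule), yields $||v^{(g)}||_\infty\le||V||_\infty^{p(g)}\,||V^{-1}||_\infty^{q(g)}$, where $p(g)$ and $q(g)$ are the numbers of letters $g_1$ and $g_2$ in $g$; since $\tau(g)=p(g)+q(g)$, the conditions $||V||_\infty\le\beta$ and $||V^{-1}||_\infty\le\beta$ force $||v^{(g)}||_\infty\le\beta^{\tau(g)}$ for all $g$, so $\varphi$ extends. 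Conversely, since $g_1$ ends in $g_1$ the fusion rule gives $u^{(g_1)}\tp u^{(g_1)}\cong u^{(g_1g_1)}$ with no other summand, so by induction $||v^{(g_1^k)}||_\infty=||V||_\infty^k$; boundedness of $\beta^{-k}||v^{(g_1^k)}||_\infty$ in $k$ then forces $||V||_\infty\le\beta$, and symmetrically $||V^{-1}||_\infty\le\beta$. This identifies ${\rm Spec}\,A(U_F^+,w_\beta)$, as a set, with the asserted one.

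It remains to upgrade the bijection to a homeomorphism. As $w_\beta$ is bounded below, the unit $1_0$ of $A(U_F^+)$ --- supported on the trivial representation --- lies in $A(U_F^+,w_\beta)$, so $A(U_F^+,w_\beta)$ is unital; characters of a unital Banach algebra being contractive, ${\rm Spec}\,A(U_F^+,w_\beta)$ is weak$^*$-compact (and Hausdorff). The map $\varphi\mapsto V$ on it is continuous, being the restriction of the continuous map on $\widetilde{(U_F^+)}_\Comp={\rm Spec}\,{\rm Pol}(U_F^+)$ along the inclusion ${\rm Spec}\,A(U_F^+,w_\beta)\hookrightarrow{\rm Spec}\,{\rm Pol}(U_F^+)$, which is weak$^*$-continuous and injective since ${\rm Pol}(U_F^+)$ embeds densely into $A(U_F^+,w_\beta)$. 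A continuous bijection from a compact space onto a Hausdorff space is a homeomorphism, exactly as at the end of the proof of Theorem~\ref{thm-free-ortho-spec}.

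The main obstacle I anticipate is the transpose/conjugate bookkeeping in the character--matrix dictionary (the identity $M=(V^{-1})^t$ and the switch between commuting with $F^*F$ and with $(F^*F)^t$), together with extracting from Proposition~\ref{prop-fusion} both the general upper bound $||v^{(g)}||_\infty\le||V||_\infty^{p(g)}||V^{-1}||_\infty^{q(g)}$ and the sharp equalities $||v^{(g_1^k)}||_\infty=||V||_\infty^k$, $||v^{(g_2^k)}||_\infty=||V^{-1}||_\infty^k$ that pin down the two norm constraints; the compactness of the (noncommutative) character space is a minor point settled by unitality.
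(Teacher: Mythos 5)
Your proposal is correct and takes essentially the same route as the paper: classify the characters of ${\rm Pol}(U_F^+)$ through $V=(\varphi(u_{ij}))$ and $M=(V^{-1})^t$, deduce the commutation with $(F^*F)^t$, and then combine the fusion rules (via Proposition \ref{prop-fusion}, i.e.\ ``characters preserve fusion'') with Proposition \ref{prop-cb-norm-weighted} and a compact-to-Hausdorff continuous bijection argument, exactly as in Theorem \ref{thm-free-ortho-spec}. The only (harmless) deviation is in the norm computation: you prove the upper bound $\|v^{(g)}\|_\infty\le\|V\|_\infty^{p(g)}\|V^{-1}\|_\infty^{q(g)}$ for all words together with the exact equalities along the pure powers $g_1^k$ and $g_2^k$, while the paper establishes the exact formula $\|v^{(g)}\|_\infty=\|V\|_\infty^{p(g)}\|V^{-1}\|_\infty^{q(g)}$ for every word by a four-case induction; both versions suffice for the stated identification of ${\rm Spec}\,A(U_F^+,w_\beta)$.
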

\begin{proof}
It is straightforward to see that a non-zero character $\varphi : {\rm Pol}(U_F^+) \to \mathbb{C}$ is in 1-1 correspondence with $V\in GL_n(\Comp)$ satisfying $V(F^*F)^t = (F^*F)^tV$, where $V$ is given by $V = (\varphi(u_{ij}))^n_{i,j=1}$, and that this 1-1 correspondence defines a homeomorphism. Note that $\varphi$ is a $*$-character if and only if $V$ is a unitary. The character $\varphi$ preserves the fusion rules, so that a simple induction on the length $|g|$ of $g$ tells us that
\[
||v^{(g)}||_\infty = ||V||^n_\infty \cdot ||V^{-1}||^m_\infty,
\]
where $n$ and $m$ are the numbers of $g_1$'s and $g_2$'s in $g$, respectively.
Indeed, the cases $|g|=0$ and $|g|=1$ are trivial. Suppose that we have the conclusion for $|g|\le k$, $k\ge 1$, and we consider the case $|g| = k+1$. If we write $g = hh'$ with $|h|=k$, then we have four possible decompositions of $v^{(hh')} \otimes v^{(h^{''})}$ since $h', h^{''} \in \{g_1, g_2\}$. Now we examine all of the four cases with the observations $v^{(e)} = 1$ and $1\le ||V||_\infty \cdot ||V^{-1}||_\infty$ (from the equation $V(F^*F)^tV^{-1} = (F^*F)^t$) to get the conclusion we wanted. Note that the topological considerations can be done in a similar way as in Theorem \ref{thm-free-ortho-spec}.
\end{proof}

\begin{rem}
As in the classical case we can see that
	$$\bigcup_{\beta\ge 1}{\rm Spec}\, A(U_F^+, w_\beta) \cong (\widetilde{U_F^+})_\Comp$$
as topological spaces.
\end{rem}

\subsection{The case of $S^+_n$}\label{subsection-per}

We recall the definition of Wang's free permutation quantum group $S_n^+$, cf.\ \cite{snplus-wang}. The C$^*$-algebra $C(S_n^+)$ is the universal C$^*$-algebra with the generators $U=(u_{ij})_{i,j=1}^n$ satisfying the relations
	\begin{equation}\label{eq-rel-snplus}
	\begin{cases} u_{ij}=u^2_{ij}=u_{ij}^* \quad \mbox{ for } 1\le i,j \le n,\\ \sum_{i=1}^n u_{ij} = 1 = \sum_{i=1}^n u_{ji} \quad\mbox{ for } 1\le j \le n,\end{cases}
	\end{equation}
so that we clearly have $C(S_n^+) = C_u(S_n^+)$.	
The co-multiplication on $C(S_n^+)$ is determined by $\Delta(u_{ij}) = \sum^n_{k=1}u_{ik}\otimes u_{kj}$, $1\le i,j\le n$. From now on we suppose $n\ge 4$. Then the fusion rule for $S_n^+$ is the same as the one of $SO(3)$, which means that ${\rm Irr}(S_n^+) = \z_+$, $u^{(0)} = 1$, $U = (u_{ij})^n_{i,j=1}\cong 1\oplus u^{(1)}$ and
	$$u^{(1)} \tp u^{(s)} \cong u^{(s+1)} \oplus u^{(s)} \oplus u^{(s-1)} \quad \mbox{ for } s\ge 1,$$
cf.\ \cite{ban-snplus}.
Note that due to the second relation in \eqref{eq-rel-snplus}, the defining representation of $S_n^+$ is not irreducible, but contains the trivial representation. But after decomposing it into irreducible representations it contains only one other representation which we denote by $u^{(1)}$ and which generates ${\rm Irr}(S_n^+)$. The associated length function is simply $\tau(s)=s$.

A character $\varphi$ on ${\rm Pol}(S_n^+)$ is uniquely determined by its values on the generators $v_{ij}=\varphi(u_{ij})$, $1\le i,j\le n$. Due to the first set of relations in \eqref{eq-rel-snplus} we get $v_{ij}\in\{0,1\}$, and then the second set of relations in \eqref{eq-rel-snplus} implies that $V=(v_{ij})_{i,j=1}^n$ is a permutation matrix, i.e., a matrix that has exactly one coefficient equal to one in each row and column, and all other coefficients are equal to zero. We see that all characters on $S_n^+$ are hermitian and extend to characters on the universal C$^*$-algebra $C(S_n^+)$. By appealing to \cite[Theorem 3.11]{KalNeu} and the fact that the weight $w: \widehat{S^+_n} \to (0,\infty)$ is bounded below we have the following.
\begin{thm}\label{thm-spec-snplus}
The spectra associated to $S_n^+$ coincide (as topological groups), and we have
$$\widetilde{S_n^+} \cong \widetilde{(S_n^+)}_\Comp \cong {\rm Spec}\, C(S_n^+)\cong S_n,$$
where $S_n$ denotes the $n$-th permutation group. In particular, we have
	$$S_n \cong \widetilde{(S_n^+)}_\Comp \cong {\rm Spec}\, A(S_n^+,w)$$
as topological groups.	
\end{thm}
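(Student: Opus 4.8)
The plan is to identify all three spectra with the classical permutation group $S_n$ and check that the weight plays no role. First I would recall from the discussion preceding the theorem that a character $\varphi$ on $\mathrm{Pol}(S_n^+)$ is determined by the matrix $V = (\varphi(u_{ij}))_{i,j=1}^n$, and that the relations \eqref{eq-rel-snplus} force each $v_{ij}$ to lie in $\{0,1\}$ and $V$ to be a permutation matrix; conversely every permutation matrix yields such a character. This already gives a bijection $\mathrm{Spec}\,\mathrm{Pol}(S_n^+) = \widetilde{(S_n^+)}_\Comp \cong S_n$, and since the $v_{ij}$ are projections the corresponding element $v_\varphi = (v^{(s)})_{s\in\z_+}$ has $\|v^{(s)}\|_\infty = 1$ for all $s$ (this follows by the fusion-rule induction: $v^{(1)}$ sits inside $V\bp V$... more simply, $v^{(1)}$ is a subrepresentation of the permutation representation $V$ of the finite group $S_n$, hence unitary, so $\|v^{(1)}\|_\infty = 1$, and then the $SO(3)$-type fusion rule $v^{(1)}\tp v^{(s)}\cong v^{(s+1)}\oplus v^{(s)}\oplus v^{(s-1)}$ propagates $\|v^{(s)}\|_\infty = 1$ by induction).

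Next I would feed this into the criterion of Proposition \ref{prop-cb-norm-weighted}: since the weight $w$ is bounded below, say $w(s)\ge\delta>0$, we get $\sup_s w(s)^{-1}\|v^{(s)}\|_\infty \le \delta^{-1} < \infty$, so every character on $\mathrm{Pol}(S_n^+)$ extends to a (completely) bounded functional on $A(S_n^+, w)$. Conversely, by Definition \ref{def-weights} and the inclusion $A(S_n^+, w)\subseteq A(S_n^+)$, every character on $A(S_n^+, w)$ restricts to a character on the dense subalgebra $\mathrm{Pol}(S_n^+)$. Combined with Theorem \ref{thm-max-group} (which identifies $\mathrm{Spec}\,\mathrm{Pol}(S_n^+) = \widetilde{(S_n^+)}_\Comp$ with the abstract complexification $(\widetilde{S_n^+})_\Comp$), and with the fact that the finite group $S_n$ is its own complexification (it has no nontrivial one-parameter subgroups, so $\widetilde{\mathfrak{g}} = 0$ and the Cartan decomposition \eqref{eq-Cartan} collapses), we conclude $\widetilde{(S_n^+)}_\Comp \cong \widetilde{S_n^+}\cong S_n$. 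The identification $\widetilde{S_n^+}\cong\mathrm{Spec}\,C(S_n^+)$ comes from Proposition \ref{prop-max-classical-subgroup} together with the observation (already noted before the theorem) that all characters on $\mathrm{Pol}(S_n^+)$ are hermitian and hence extend to $C(S_n^+) = C_u(S_n^+)$.

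For the topological statement I would check that all the bijections above are homeomorphisms. The map $\varphi\mapsto V$ is continuous from $\mathrm{Spec}\,\mathrm{Pol}(S_n^+)$ into $GL_n(\Comp)$ (evaluation at a fixed generator is weak$^*$-continuous), its image is the finite set of permutation matrices which is discrete, and $\mathrm{Spec}\,\mathrm{Pol}(S_n^+)$ is compact by Proposition \ref{prop-max-classical-subgroup} (it equals $\widetilde{S_n^+}$, a compact group); a continuous bijection from a compact space to a Hausdorff space is a homeomorphism. The same reasoning handles $\mathrm{Spec}\,A(S_n^+, w)$: its Gelfand topology makes it Hausdorff, it injects continuously into the finite discrete $S_n$, and one checks compactness (or simply observes that a continuous bijection onto a finite discrete space from any space forces the domain to be discrete and the map to be a homeomorphism). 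The group structures agree because multiplication on all these spectra is induced by multiplication in $\prod_s M_{n_s}$, which restricts to the pointwise product of permutation matrices, i.e. the group law of $S_n$.

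The only genuinely delicate point is the appeal to \cite[Theorem 3.11]{KalNeu}, which is what guarantees that $\widetilde{S_n^+}$ — a priori just the group of group-like unitaries in $VN(S_n^+)$ — actually coincides with $\mathrm{Spec}\,C(S_n^+)$; the rest is bookkeeping that the $\{0,1\}$-valued relations \eqref{eq-rel-snplus} leave no room for a nontrivial complexification and that boundedness-below of $w$ washes out any weight dependence. I would therefore state the proof as: the classification of characters on $\mathrm{Pol}(S_n^+)$ gives $\widetilde{(S_n^+)}_\Comp\cong S_n$ with the discrete topology; \cite[Theorem 3.11]{KalNeu} (or Proposition \ref{prop-max-classical-subgroup} plus hermitianness of characters) gives $\widetilde{S_n^+}\cong\mathrm{Spec}\,C(S_n^+)\cong S_n$; and since $w$ is bounded below, Proposition \ref{prop-cb-norm-weighted} shows $\mathrm{Spec}\,A(S_n^+,w)$ contains every character of $\mathrm{Pol}(S_n^+)$, while $A(S_n^+,w)\subseteq A(S_n^+)$ and density of $\mathrm{Pol}(S_n^+)$ show it contains no others, so $\mathrm{Spec}\,A(S_n^+,w)\cong S_n$ as topological groups as well.
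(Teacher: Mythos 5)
Your proposal is correct and follows essentially the same route as the paper: classify the characters on ${\rm Pol}(S_n^+)$ via the relations \eqref{eq-rel-snplus} as permutation matrices, observe they are hermitian (hence the $v^{(s)}$ are unitary and extend to $C(S_n^+)$), invoke \cite[Theorem 3.11]{KalNeu} for $\widetilde{S_n^+}\cong{\rm Spec}\,C(S_n^+)$, and use boundedness-below of $w$ together with Proposition \ref{prop-cb-norm-weighted} to see the weight plays no role. The paper gives only this sketch in the paragraph preceding the theorem; your write-up merely fills in the same steps (fusion-rule norm estimate, density/continuity for the converse inclusion, compactness for the homeomorphism) in more detail.
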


\section{Beurling-Fourier algebras on $SU_q(2)$ and their finite dimensional representations}\label{sec-finite-dim-rep}

In this section we will investigate the second, more general scenario, namely examining finite dimensional representations on Beurling-Fourier algebras coming from the complexification structure of the underlying compact quantum group under the general philosophy that Beurling-Fourier algebras can {\it detect} the structure of complexification. Instead of developing the general theory we will focus on a special case, namely $SU_q(2)$ whose complexifications is $SL_q(2,\mathbb{C})$. More precisely we begin with an element $\pi \in \text{sp}\,C_0(SL_q(2,\Comp))$ of the C$^*$-algebraic spectrum of $C_0(SL_q(2,\Comp))$, i.e. $\pi: C_0(SL_q(2, \Comp)) \to B(H)$ is an irreducible $*$-representation for some Hilbert space $H$, which naturally requires details of $SL_q(2,\Comp)$. We provide a summary of \cite{podles+woronowicz90, woronowicz91} in the following two subsections.

\subsection{$SL_q(2, \mathbb{C})$, $SU_q(2)$ and $AN_q$-matrices}\label{subsubsec-SLq-matrix}

It is natural to expect to have generators of the $C^*$-algebra $C_0(SL_q(2,\Comp))$ similar to the coordinate functions on $SL(2,\Comp)$. For that purpose the authors of \cite{podles+woronowicz90} introduced the concept of a \emph{$SL_q(2, \mathbb{C})$-matrix} (\cite{podles+woronowicz90}), which is a $2\times 2$-matrix $A = \begin{bmatrix} a & b\\ c&d \end{bmatrix} \in M_2(B(H))$ for some Hilbert space $H$ whose coefficients $a,b,c,d$ satisfy the following 17 commutation relations.
	$$
	\begin{cases}
	ab=qba,\;\, ac=qca,\;\, ad-qbc=I,\\
	bc=cb,\;\, bd=qdb,\;\, cd=qdc,\\
	ca^*=qa^*c,\;\, da^*=a^*d,\;\, da-q^{-1}bc=I, \\
	cb^*=b^*c,\;\, dc^*=q^{-1}c^*d,\;\, cc^*=c^*c,\\
	ba^*=q^{-1}a^*b+q^{-1}(1-q^2)c^*d,\;\, db^*=qb^*d-q(1-q^2)a^*c,\\ aa^*=a^*a+(1-q^2)c^*c,\;\; dd^*=d^*d-(1-q^2)c^*c,\\
		bb^*=b^*b+(1-q^2)(d^*d-a^*a)-(1-q^2)^2c^*c.
		\end{cases}
	$$
When $A$ is a unitary, then the matrix is of the form $\begin{bmatrix} a & -qc^*\\ c&a^* \end{bmatrix}$ and the list of relations collapses down to the ones of the generators $a_q$ and $c_q$ of $SU_q(2)$, namely, $a^*a+c^*c = I = aa^*+q^2c^*c,\;c^*c=cc^*,\;ac=qca,\;ac^*=qc^*a,$
so that we call it a \emph{$SU_q(2)$-matrix}. When $A$ has a zero term in the $(2,1)$-entry, then it is of the form $\begin{bmatrix} a & n \\ 0 & a^{-1} \end{bmatrix}$ and the list of relations collapses down to the following:
	$$aa^*=a^*a,\;\, an=qna,\;\, na^*=q^{-1}a^*n,\;\, nn^*=n^*n+(1-q^2)((a^*a)^{-1}-a^*a).$$
We call $A$ an \emph{$AN_q$-matrix}, if in addition spec $a \subseteq S_q$, where the set $S_q$ is given by
	$$S_q := \begin{cases}\Real_+, & q>0 \\ \{ |q|^xe^{i\pi x}: x\in \Real\} \cup \{0\}, & q<0.\end{cases}$$

In the proof of \cite[theorem 5.1]{podles+woronowicz90} the authors introduced a family of irreducible $AN_q$-matrices as follows: For each $s\in \frac{1}{2}\z_+$ we define operators $a^d_s$ and $n^d_s$ acting on $\ell^2_{2s+1}$ by
	$$\begin{cases} a^d_s e_k = q^{k}e_k,\\ n^d_s e_k = \sqrt{\abs{q}^{-2s} - \abs{q}^{-2k} - \abs{q}^{2k+2} + \abs{q}^{2s+2}}e_{k+1}\end{cases}$$
for $k = -s, -s+1, \cdots, s$, where $\{e_k : k = -s, -s+1, \cdots, s\}$ is the canonical orthonormal basis for $\ell^2_{2s+1}$ and $e_{s+1} = 0$.

It has been noted (\cite[p. 414]{podles+woronowicz90}) that any irreducible $AN_q$-matrix is (upto unitary equivalence) of the form
		\begin{equation}\label{A_s}
		A_s := \begin{bmatrix} a^d_s & n^d_s\\ 0& (a^d_s)^{-1} \end{bmatrix}
		\end{equation}
for some $s\in \frac{1}{2}\z_+$.  We will prove later that
$$A_{\frac{1}{2}}\otimes A_s \cong A_{s+\frac{1}{2}} \oplus A_{s-\frac{1}{2}}$$
for $s\ge \frac{1}{2}$, cf.\ Lemma \ref{lem-technical}.

We consider their (unbounded) direct sums
	$$\begin{cases}a^d := \oplus_{s\in \frac{1}{2}\z_+} a^d_s,\\ n^d: = \oplus_{s\in \frac{1}{2}\z_+} n^d_s\end{cases}$$
and the following four (unbounded) elements
	$\begin{cases}\alpha := a_q \otimes a^d,\\ \beta := a_q\otimes n^d - qc^*_q\otimes (a^d)^{-1},\\ \gamma := c_q\otimes a^d,\\ \delta :=c_q\otimes n^d + a^*_q\otimes (a^d)^{-1}.\end{cases}$
	
The elements $a^d$ and $n^d$ are {\it affiliated} to the $C^*$-algebra $c_0(\widehat{SU_q(2)})$ and the elements $\alpha, \beta, \gamma, \delta$ are {\it affiliated} to $C_0(SL_q(2,\Comp))$. Recall that the concept of ``affiliated element'' of a $C^*$-algebra has been introduced in  \cite{woronowicz91}. We write $b \eta B$ when an element $b$ is affiliated with a $C^*$-algebra $B$.
In the particular case of $B = c_0\text{-}\oplus_{\alpha\in \I}B_\alpha$ for unital $C^*$-algebras $B_\alpha$ we actually have that $b\eta B$ if and only if $b = (b_\alpha)_{\alpha \in \I}$ with $b_\alpha \in B_\alpha$ for all $\alpha \in \I$. Note that the set of all elements affiliated to the above $A$ allows a natural $*$-algebra structure. Note also that any $*$-representation between $C^*$-algebras can be uniquely extended to affiliated elements.
The elements $a^d, n^d$ and $\alpha, \beta, \gamma, \delta$ generate the $C^*$-algebras $c_0(\widehat{SU_q(2)})$ and  $C_0(SL_q(2,\Comp))$, respectively, in the following sense:
For any $AN_q$-matrix $\begin{bmatrix} \tilde{a} & \tilde{n} \\ 0 & \tilde{a}^{-1} \end{bmatrix}$ acting on a Hilbert space $H$ there is a unique nondegenerate $*$-homomorphism $\pi : c_0(\widehat{SU_q(2)}) \to B(H)$ such that $\pi(a^d)=\tilde{a}$ and $\pi(n^d) = \tilde{n}$, where we use the canonical extension of $\pi$ to the affiliated elements. Similarly, for any $SL_q(2,\Comp)$-matrix $\begin{bmatrix} a & b \\ c & d \end{bmatrix}$ acting on a Hilbert space $H$ there is a unique  $*$-homomorphism $\pi : C_0(SL_q(2, \Comp)) \to B(H)$ such that $\pi(\alpha) = a$, $\pi(\beta) = b$, $\pi(\gamma)=c$ and $\pi(\delta) = d$, where we again use the canonical extension of $\pi$ to the affiliated elements. We would like to record the last observation as follows.

\begin{prop}\label{prop-SLq2-matrix}
For any Hilbert space $H$ there is a 1-1 correspondence between a $*$-homomorphism $\pi : C_0(SL_q(2, \Comp)) \to B(H)$ and a $SL_q(2,\Comp)$-matrix $A\in M_2(B(H))$. We have similar correspondences between a $*$-homomorphism $\pi : c_0(\widehat{SU_q(2)}) \to B(H)$ (resp. $\pi : C(SU_q(2)) \to B(H))$) and a $AN_q$-matrix (resp. $SU_q(2)$-matrix) $A\in M_2(B(H))$. These correspondences preserve irreducibility so that we have
	$\frac{1}{2}\z_+ \cong {\rm sp}\,c_0(\widehat{SU_q(2)})$ via the mapping $s \in \frac{1}{2}\z_+ \mapsto A_s$ from \eqref{A_s}.
\end{prop}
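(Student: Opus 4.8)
The plan is to assemble the statement from the structural results quoted from \cite{podles+woronowicz90, woronowicz91}. For the first assertion, namely the correspondence between $*$-homomorphisms $\pi : C_0(SL_q(2,\Comp)) \to B(H)$ and $SL_q(2,\Comp)$-matrices, one direction is immediate: given $\pi$, apply its canonical extension to the affiliated elements $\alpha,\beta,\gamma,\delta$ and set $A = \begin{bmatrix} \pi(\alpha) & \pi(\beta) \\ \pi(\gamma) & \pi(\delta)\end{bmatrix}$; this is an $SL_q(2,\Comp)$-matrix because the $17$ commutation relations are polynomial identities in $\alpha,\beta,\gamma,\delta$ which are preserved by any $*$-homomorphism (more precisely, preserved by the canonical extension to affiliated elements, as recalled in the excerpt). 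The converse is exactly the universal property quoted just above the proposition: any $SL_q(2,\Comp)$-matrix $\begin{bmatrix} a & b \\ c & d\end{bmatrix}$ on $H$ yields a unique $\pi$ with $\pi(\alpha)=a$, etc. One should note that these two assignments are mutually inverse, so the correspondence is genuinely bijective. The same argument, applied to the generators $a^d,n^d$ and $\alpha=a^d,\ldots$, gives the parallel statements for $c_0(\widehat{SU_q(2)})$ and for $C(SU_q(2))$; in the latter case, when $A$ is unitary the $17$ relations collapse to the $SU_q(2)$ relations, as already observed in the text.

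Next I would verify that these correspondences preserve irreducibility. Here the key point is that $\pi$ and $A = (\pi(\alpha),\pi(\beta),\pi(\gamma),\pi(\delta))$ generate the same von Neumann algebra in $B(H)$: since $\alpha,\beta,\gamma,\delta$ generate $C_0(SL_q(2,\Comp))$ in the affiliation sense, a bounded operator on $H$ commutes with all of $\pi(C_0(SL_q(2,\Comp)))$ if and only if it commutes (in the appropriate sense for possibly unbounded affiliated operators, e.g. with the resolvents/bounded continuous functional calculus) with $A$. Hence $\pi$ is irreducible, i.e. $\pi(C_0(SL_q(2,\Comp)))' = \Comp I$, if and only if the matrix $A$ is irreducible in the sense that no nontrivial closed subspace is invariant under all the operators building $A$ — which is the notion of an irreducible $SL_q(2,\Comp)$- or $AN_q$-matrix used in \cite{podles+woronowicz90}. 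The analogous equivalence holds for $c_0(\widehat{SU_q(2)})$ and $AN_q$-matrices.

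For the final clause, $\frac{1}{2}\z_+ \cong {\rm sp}\,c_0(\widehat{SU_q(2)})$ via $s\mapsto A_s$, I would combine the irreducibility-preserving bijection just established with the classification quoted from \cite[p.~414]{podles+woronowicz90}: every irreducible $AN_q$-matrix is unitarily equivalent to exactly one $A_s$ from \eqref{A_s}. Unitary equivalence of $AN_q$-matrices corresponds precisely to unitary equivalence of the associated $*$-homomorphisms, so the $A_s$ form a complete list of irreducible $*$-representations of $c_0(\widehat{SU_q(2)})$ up to equivalence, with no repetitions; that is exactly the asserted identification of ${\rm sp}\,c_0(\widehat{SU_q(2)})$ with $\frac{1}{2}\z_+$. (This is of course consistent with the general fact, recalled in \eqref{eq-spectrum} and the surrounding discussion, that ${\rm sp}\,c_0(\QDG)\cong {\rm Irr}(\QG)$, here ${\rm Irr}(SU_q(2)) = \frac12\z_+$; one can also simply invoke that as the source of the bijection and check it is realized by $s\mapsto A_s$.) The main obstacle — really the only nonroutine point — is the irreducibility-preservation step: one must handle the fact that $\alpha,\beta,\gamma,\delta$ are unbounded affiliated elements, so "commutant" has to be interpreted via bounded functions of these operators rather than naively, and one must check that generation in the affiliation sense of \cite{woronowicz91} implies the commutant of $\pi(C_0(SL_q(2,\Comp)))$ equals the commutant of the family $\{a,b,c,d\}$. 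Everything else is a direct appeal to the cited universal properties and classification.
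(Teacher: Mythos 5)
Your proposal is correct and takes essentially the same approach as the paper: the paper offers no separate argument, recording the proposition as a direct consequence of the universal property of the generating affiliated elements $\alpha,\beta,\gamma,\delta$ (resp.\ $a^d,n^d$) and the classification of irreducible $AN_q$-matrices from \cite{podles+woronowicz90,woronowicz91}, which is exactly what you assemble. Your additional care about irreducibility preservation (interpreting commutants of the unbounded affiliated generators via bounded functional calculus) goes slightly beyond what the paper spells out but is consistent with, and if anything strengthens, the intended justification.
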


\subsection{The Iwasawa decomposition for $SL_q(2,\Comp)$}\label{subsec-Iwasawa}

In \cite[Theorem 1.3]{podles+woronowicz90} the following Iwasawa decomposition has been proved. Recall that two operators $X$, $Y\in B(H)$ doubly commute if $XY = YX$ and $XY^* = Y^*X$.

\begin{thm}\label{thm-Iwasawa} \cite[Theorem 1.3]{podles+woronowicz90}
For any $SL_q(2, \mathbb{C})$-matrix $A \in M_2(B(H))$ we have a unique decomposition $A = A_c A_d$ where $A_c$ is a $SU_q(2)$-matrix and $A_d$ is an $AN_q$-matrix acting on the same Hilbert space $H$. The matrix elements of $A_c$ doubly commute with matrix elements of $A_d$. Moreover, the matrix elements of $A_c$ and $A_d$ belong to the $C^*$-algebra generated by matrix elements of $A$. 
\end{thm}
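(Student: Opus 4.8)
The plan is to carry out, at the operator level, the classical proof that every $g\in SL(2,\Comp)$ factors as $g = kp$ with $k\in SU(2)$ and $p$ upper triangular with positive diagonal, via Gram--Schmidt on the columns of $g$. Write $A = \begin{bmatrix} a & b \\ c & d\end{bmatrix}$. The structural fact that drives the argument is that the positive operator
\[
P := a^*a + c^*c = aa^* + q^2 c^*c
\]
(the two expressions agree by $aa^* = a^*a + (1-q^2)c^*c$) \emph{commutes with both $a$ and $c$}; this is a short computation from the seventeen defining relations, using $ac = qca$, $ac^* = qc^*a$ and $cc^* = c^*c$. Moreover $A$ is invertible in $M_2(B(H))$ with inverse $\begin{bmatrix} d & -q^{-1}b \\ -qc & a\end{bmatrix}$ (using $ad-qbc = I = da - q^{-1}bc$, $bc = cb$, $ab = qba$, $cd = qdc$), so $A^*A$, and hence its $(1,1)$-compression $P$, is bounded away from $0$.

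I would then let $\tilde a$ be the \emph{$S_q$-square root} of $P$: for $q>0$ the ordinary positive square root $P^{1/2}$, and for $q<0$ the normal operator $f(P)$, where $f$ sends $t\in\mathrm{spec}\,P$ to the unique point of $S_q$ of modulus $\sqrt t$ (continuous because $\mathrm{spec}\,P$ avoids a neighbourhood of $0$). In both cases $\tilde a^*\tilde a = \tilde a\tilde a^* = P$ with $\tilde a$ invertible, so $\tilde a$, $\tilde a^{-1}$, $\tilde a^*$ lie in $C^*(P)\subseteq C^*(A)$ and commute with $a$ and $c$. Set $u := a\tilde a^{-1}$, $v := c\tilde a^{-1}$, $A_c := \begin{bmatrix} u & -qv^* \\ v & u^*\end{bmatrix}$, and $A_d := A_c^*A$. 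Using that $\tilde a^{\pm 1}$ and $\tilde a^*$ commute with $a$ and $c$, one checks directly that $A_c$ is unitary and that $u,v$ satisfy the $SU_q(2)$-relations --- for instance $u^*u + v^*v = (\tilde a^*)^{-1}P\,\tilde a^{-1} = I$ and $uv = ac\,\tilde a^{-2} = q\,ca\,\tilde a^{-2} = q\,vu$ --- so $A_c$ is an $SU_q(2)$-matrix. A short computation with the remaining relations identifies $A_d = A_c^*A$ as upper triangular with $(1,1)$-entry $\tilde a$, $(1,2)$-entry $\tilde n := (\tilde a^*)^{-1}(a^*b+c^*d)$, and $(2,2)$-entry $\tilde a^{-1}$: the $(2,1)$-entry vanishes by $ac = qca$, while the reciprocity of the diagonal follows by using $da - q^{-1}bc = I$ and $cc^* = c^*c$ to compute the $(2,2)$-entry of $A^{-1}A_c = (A_c^*A)^{-1}$ as $(aa^*+q^2cc^*)(\tilde a^*)^{-1} = P(\tilde a^*)^{-1} = \tilde a$. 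Granting $\mathrm{spec}\,\tilde a\subseteq S_q$ (immediate from the construction) together with the four $AN_q$-relations for $A_d$ --- which reduce to commutation identities between $P$ and $\tilde n$ and a ``quantum $\det = 1$'' identity among the entries of $A^*A$ --- we conclude $A_d$ is an $AN_q$-matrix. By construction $A = A_cA_d$, and every entry lies in $C^*(A)$.

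For the double-commutation claim, note that the entries of $A_c$ involve only $a$, $c$, $\tilde a^{\pm 1}$, while those of $A_d$ involve only $\tilde a^{\pm 1}$ and $\tilde n$; since $\tilde a^{\pm 1}$ and $\tilde a^*$ already commute with $a$ and $c$, it remains only to show $u$ and $v$ doubly commute with $\tilde n$, which follows once the relations linking $a,c$ with $a^*b+c^*d$ and $b^*a+d^*c$ are propagated through the functional calculus defining $\tilde a$. Uniqueness is then immediate: if $A = A_cA_d = A_c'A_d'$ with the stated properties, then $A^*A = A_d^*A_c^*A_cA_d = A_d^*A_d = (A_d')^*A_d'$, and since the diagonal of an $AN_q$-matrix has spectrum in $S_q$, its upper-triangular square root is unique, so $A_d' = A_d$ and hence $A_c' = A_c$. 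The hard part is precisely this double-commutation step together with the bookkeeping through functional calculus: in the bounded setting of the statement $P$ is bounded away from $0$, so $\tilde a$ and $\tilde a^{-1}$ are bounded and continuous functional calculus suffices, but when the decomposition is later applied to unbounded data --- the affiliated generators $\alpha,\beta,\gamma,\delta$ of $C_0(SL_q(2,\Comp))$ --- one must run the same argument within Woronowicz's calculus of elements affiliated to a $C^*$-algebra. The remaining verifications (that $A_c$ is an $SU_q(2)$-matrix, that $A_d$ is an $AN_q$-matrix, and the quantum-determinant identity) are lengthy but routine relation-chasing.
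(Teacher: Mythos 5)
The paper does not prove this statement at all: Theorem \ref{thm-Iwasawa} is imported verbatim from \cite[Theorem 1.3]{podles+woronowicz90}, so there is no internal argument to compare yours with, and what you have written is in effect a proof of the cited result. Your quantum Gram--Schmidt scheme is sound, and the structural facts you lean on do check out: $P=a^*a+c^*c=aa^*+q^2c^*c$ commutes with $a$ and $c$ (the computation also uses $aa^*=a^*a+(1-q^2)c^*c$, not only the three relations you list); $A$ is invertible with the inverse you display, so $P$ is bounded below; and, setting $N:=a^*b+c^*d$, the seventeen relations give $aN=qNa$, $cN=qNc$, $Na^*=q^{-1}a^*N$, $Nc^*=q^{-1}c^*N$, hence $PN=q^2NP$. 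The propagation through functional calculus then works because the $S_q$-root satisfies $f(q^2t)=qf(t)$ for both signs of $q$ --- this is precisely where the spiral shape of $S_q$ for $q<0$ is used, and your write-up should say so explicitly --- and it yields $\tilde{a}\tilde{n}=q\tilde{n}\tilde{a}$, $\tilde{n}\tilde{a}^*=q^{-1}\tilde{a}^*\tilde{n}$ and the double commutation of $u,v,u^*,v^*$ with $\tilde{n},\tilde{n}^*$, as you claim. The uniqueness step is also fine once phrased as: $\tilde{a}$ is normal with spectrum in $S_q$, the modulus map is injective on $S_q$, so $\tilde{a}=g(\tilde{a}^*\tilde{a})$ where $g(t)$ is the point of $S_q$ of modulus $\sqrt{t}$, and then $\tilde{n}$ is recovered from the $(1,2)$-entry of $A^*A$.

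The one substantive item left unproved in your text is the fourth $AN_q$-relation for $A_d$, namely $\tilde{n}\tilde{n}^*=\tilde{n}^*\tilde{n}+(1-q^2)(P^{-1}-P)$, which via $PN=q^2NP$ is equivalent to the identity $NN^*-q^{-2}N^*N=(1-q^2)(I-P^2)$ among the entries of $A$; unlike the commutation identities above, this requires the last block of the seventeen relations (those for $db^*$, $dd^*$, $bb^*$) and is not a one-line check, so a complete proof must carry it out --- it is true, but calling it ``routine'' without displaying it is the only real gap. Two minor points: the $(2,2)$-entry of $A_d=A_c^*A$ is obtained directly as $-qvb+ud=\tilde{a}^{-1}(ad-qbc)=\tilde{a}^{-1}$, which avoids your detour through $A^{-1}A_c$ (that detour also tacitly needs invertibility of the $(2,2)$-entry before inverting the triangular matrix entrywise); and the functional calculus stays inside the $C^*$-algebra generated by the entries of $A$ because that algebra is unital, as $I=ad-qbc$. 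Your closing remark about affiliated elements is not needed for the theorem as stated, where the entries of $A$ are bounded; it only becomes relevant when the decomposition is applied to the generators of $C_0(SL_q(2,\Comp))$ as in Section \ref{subsec-Iwasawa}.
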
		

For an irreducible representation $\pi \in {\rm sp}\,C_0(SL_q(2,\Comp))$ acting on $H$, the Iwasawa decomposition and the correspondence \eqref{eq-spectrum} can be combined as follows. From \eqref{eq-spectrum} we have uniquely determined $\pi_c \in {\rm sp}\,C(SU_q(2))$ and $\pi_d \in {\rm sp}\,c_0(\widehat{SU_q(2)})$ acting on $H_c$ and $H_d$, respectively, satisfying $\pi = \pi_c \otimes \pi_d$. Let $A = \begin{bmatrix} a & b \\ c & d \end{bmatrix}$ be the $SL_q(2,\mathbb{C})$-matrix corresponding to the representation $\pi$. Then we have
	\begin{align}\label{eq-Irred-Iwasawa}
	A & = (id_{M_2}\otimes \pi)\left(\begin{bmatrix} \alpha & \beta \\ \gamma & \delta \end{bmatrix}\right) \\
	& = (id_{M_2}\otimes \pi)\left(\begin{bmatrix} a_q \otimes a^d & a_q\otimes n^d - qc^*_q\otimes (a^d)^{-1}\\ c_q\otimes a^d & c_q\otimes n^d + a^*_q\otimes (a^d)^{-1} \end{bmatrix}\right)\nonumber \\
	& = \begin{bmatrix} \pi_c(a_q)\otimes 1_{B(H_d)} & - q\pi_c(c^*_q)\otimes 1_{B(H_d)} \\ \pi_c(c_q)\otimes 1_{B(H_d)} &  \pi_c(a^*_q)\otimes 1_{B(H_d)} \end{bmatrix}\nonumber \\
	& \;\;\;\;\;\; \times  \begin{bmatrix} 1_{B(H_c)} \otimes \pi_d(a^d) & 1_{B(H_c)} \otimes \pi_d(n^d) \\ 0 &  1_{B(H_c)} \otimes \pi_d((a^d)^{-1}) \end{bmatrix} \nonumber\\
	& =: A_cA_d \nonumber,
	\end{align}
Since the above $A_c$ and $A_d$ are $SU_q(2)$-matrix and $AN_q(2)$-matrix with doubly commuting entries, we know that $A = A_cA_d$ is actually the Iwasawa decomposition.

We end this subsection with a discussion how we get a homomorphism $\varphi = \varphi_A$ from ${\rm Pol}(SU_q(2))$ into $B(H)$ out of a $*$-homomorphism $\pi = \pi_A : C_0(SL_q(2, \Comp)) \to B(H)$ associated to a $SL_q(2,\Comp)$-matrix $A$ acting on $H$ with the Iwasawa decomposition $A = A_cA_d$. Let $\A$ be the $*$-algebra of all elements affiliated to $C_0(SL_q(2, \Comp))$ and $\A_{\rm hol}$ be the subalgebra of $\A$ generated by $\alpha, \beta, \gamma, \delta$. Then by \cite[Proposition 6.1]{podles+woronowicz90} we have the following linear multiplicative bijection.
	$$Q : \A_{\rm hol} \to {\rm Pol}(SU_q(2))$$
such that $Q(\alpha) = a_q$, $Q(\beta) = -qc^*_q$, $Q(\gamma) = c_q$ and $Q(\delta) = a^*_q$. This allows us to define a homomorphism $\varphi = \varphi_A$ from ${\rm Pol}(SU_q(2))$ into $B(H)$ as follows.
	$$\varphi_A : {\rm Pol}(SU_q(2)) \stackrel{Q^{-1}}{\longrightarrow} \A_{\rm hol} \subseteq \A \stackrel{\pi_A}{\longrightarrow} B(H),$$
where $\pi_A$ denotes the extended $*$-homomorphism. Note that it is straightforward to check that the extended $*$-homomorphism $\pi_A$ is unital. Now we repeat the construction of the homomorphisms $\varphi_c = \varphi_{A_c}$ and $\varphi_d = \varphi_{A_d}$ using the fact that $A_c$ and $A_d$ are also $SL_q(2,\mathbb{C})$-matrices. Moreover, we get the associated elements (as in Section \ref{subsec-discrete})
	$$v_A = v_{\varphi_A},\; v_c = v_{\varphi_{c}},\; v_d = v_{\varphi_{d}} \in \prod_{s\in \IrrQG} (M_{n_s}\otimes B(H)),$$
respectively. Note that we can easily check $\varphi_c$ extends to a $*$-homomorphism from $C(SU_q(2))$ into $B(H)$.

\subsection{Finite dimensional representations which are (completely) bounded on Beurling-Fourier algebras of $SU_q(2)$}

Let us return to an irreducible $*$-representation $\pi: C_0(SL_q(2, \Comp)) \to B(H)$ for some Hilbert space $H$ with $\pi_c \in {\rm sp}\,C(SU_q(2))$ and $\pi_d \in {\rm sp}\,c_0(\widehat{SU_q(2)})$ acting on $H_c$ and $H_d$, respectively, satisfying $\pi = \pi_c \otimes \pi_d$. We consider $SL_q(2,\mathbb{C})$-matrix $A\in M_2(B(H))$ associated to $\pi$ with the Iwasawa decomposition $A=A_c A_d$ as in \eqref{eq-Irred-Iwasawa}. We also have another $SL_q(2,\Comp)$-matrix $A_{\pi_d} = \begin{bmatrix} \pi_d(a^d) & \pi_d(n^d) \\ 0 & \pi_d((a^d)^{-1}) \end{bmatrix}$. We recall the associated homomorphisms $\varphi=\varphi_A$, $\varphi_c=\varphi_{A_c}$, $\varphi_d=\varphi_{A_d}$ and $\varphi_{A_{\pi_d}}$ from ${\rm Pol}(SU_q(2))$ into $B(H)$ with the corresponding elements $v= v_A$, $v_c$, $v_d$ and $v_{\pi_d}$ as in the end of Section \ref{subsec-Iwasawa}.

\begin{prop}\label{prop-discrete-only}
Let $s\in \frac{1}{2}\z_+$ be arbitrary. Then, we have the following.
\begin{enumerate}
\item We have $v^{(s)} = v_c^{(s)} v_d^{(s)} \in M_{2s+1}\otimes B(H).$
\item The element $v_c^{(s)}$ is a unitary.
\item Consider the three transferred homomorphisms
	$$\varphi\circ\mathcal{F},\; \varphi_d\circ\mathcal{F}: c_{00}(\widehat{SU_q(2)}) \to B(H)$$
and $\varphi_{A_{\pi_d}}\circ\mathcal{F}: c_{00}(\widehat{SU_q(2)}) \to B(H_d)$ as in Section \ref{subsec-discrete}. The map $\varphi\circ\mathcal{F}$ extends to a completely bounded map from $A(SU_q(2), w)$ if and only if $\varphi_d\circ\mathcal{F}$ does so if and only if $\varphi_{A_{\pi_d}}\circ\mathcal{F}$ does so. In that case they all have the same cb-norms.
\end{enumerate}

\end{prop}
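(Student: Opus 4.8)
The plan is to reduce all three items to a single algebraic identity, namely that $\varphi_A$ equals the ``convolution product'' $(\varphi_c\otimes\varphi_d)\circ\Delta$ of $\varphi_c$ and $\varphi_d$, and then read off the consequences for the elements $v^{(s)}$, $v_c^{(s)}$, $v_d^{(s)}$, $v_{\pi_d}^{(s)}$. For item (1) I would start from the base case $s=\tfrac12$: unwinding the definitions of $\varphi_A,\varphi_c,\varphi_d$ through the bijection $Q$ (which sends $\alpha\mapsto a_q$, $\beta\mapsto -qc_q^*$, $\gamma\mapsto c_q$, $\delta\mapsto a_q^*$) and using $u^{(1/2)}=\begin{bmatrix} a_q & -qc_q^* \\ c_q & a_q^*\end{bmatrix}$ shows $v^{(1/2)}=A$, $v_c^{(1/2)}=A_c$, $v_d^{(1/2)}=A_d$, so that $v^{(1/2)}=v_c^{(1/2)}v_d^{(1/2)}$ is literally the Iwasawa decomposition \eqref{eq-Irred-Iwasawa}. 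Next, by \eqref{eq-Irred-Iwasawa} the entries of $A_c$ lie in $B(H_c)\otimes 1_{B(H_d)}$ and those of $A_d$ in $1_{B(H_c)}\otimes B(H_d)$; since $\A_{\rm hol}$ is generated by $\alpha,\beta,\gamma,\delta$ and $\varphi_c=\pi_{A_c}\circ Q^{-1}$, $\varphi_d=\pi_{A_d}\circ Q^{-1}$, the ranges of $\varphi_c$ and $\varphi_d$ commute. Consequently $\psi:=(\varphi_c\otimes\varphi_d)\circ\Delta\colon {\rm Pol}(SU_q(2))\to B(H)$ is an algebra homomorphism, and since ${\rm Pol}(SU_q(2))$ is generated (as an algebra) by $a_q,a_q^*,c_q,c_q^*$ it suffices to verify $\psi=\varphi_A$ on these four generators; substituting $\Delta(a_q)=a_q\otimes a_q-qc_q^*\otimes c_q$, $\Delta(c_q)=c_q\otimes a_q+a_q^*\otimes c_q$ and their adjoints, together with the explicit entries of $A_c,A_d$ from \eqref{eq-Irred-Iwasawa}, reproduces the entries of $A=A_cA_d$. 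Once $\varphi_A=\psi$ is known, evaluating on $u^{(s)}_{kl}$ and using $\Delta(u^{(s)}_{kl})=\sum_j u^{(s)}_{kj}\otimes u^{(s)}_{jl}$ gives $(v^{(s)})_{kl}=\sum_j(v_c^{(s)})_{kj}(v_d^{(s)})_{jl}$, i.e.\ $v^{(s)}=v_c^{(s)}v_d^{(s)}\in M_{2s+1}\otimes B(H)$ for every $s$.

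Item (2) is then immediate: $\varphi_c$ extends to a $*$-homomorphism $C(SU_q(2))\to B(H)$ and $u^{(s)}$ is a unitary in $M_{2s+1}(C(SU_q(2)))$, so $v_c^{(s)}=(id\otimes\varphi_c)(u^{(s)})$ is a unitary in $M_{2s+1}\otimes B(H)$. For item (3), Proposition \ref{prop-cb-norm-weighted} tells us that each of $\varphi_A\circ\F$, $\varphi_d\circ\F$, $\varphi_{A_{\pi_d}}\circ\F$ extends to a completely bounded map on $A(SU_q(2),w)$ exactly when $\sup_s w(s)^{-1}\|v^{(s)}\|_{M_{2s+1}\otimes B(H)}$, $\sup_s w(s)^{-1}\|v_d^{(s)}\|_{M_{2s+1}\otimes B(H)}$, $\sup_s w(s)^{-1}\|v_{\pi_d}^{(s)}\|_{M_{2s+1}\otimes B(H_d)}$ respectively is finite, in which case the cb-norm equals that supremum; so (3) amounts to the equality of these three suprema (with the convention that they may all be $+\infty$). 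By (1) and (2), $v^{(s)}=v_c^{(s)}v_d^{(s)}$ with $v_c^{(s)}$ unitary, so left multiplication by $v_c^{(s)}$ is isometric and $\|v^{(s)}\|=\|v_d^{(s)}\|$. Finally, comparing \eqref{eq-Irred-Iwasawa} with the definition of $A_{\pi_d}$ gives $A_d=1_{B(H_c)}\otimes A_{\pi_d}$, hence $\pi_{A_d}=(1_{B(H_c)}\otimes\,\cdot\,)\circ\pi_{A_{\pi_d}}$ (two $*$-homomorphisms of $C_0(SL_q(2,\Comp))$ agreeing on the generators $\alpha,\beta,\gamma,\delta$), so $\varphi_d=(1_{B(H_c)}\otimes\,\cdot\,)\circ\varphi_{A_{\pi_d}}$ and $v_d^{(s)}=1_{B(H_c)}\otimes v_{\pi_d}^{(s)}$, whence $\|v_d^{(s)}\|=\|v_{\pi_d}^{(s)}\|$. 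The three suprema therefore coincide, which is (3).

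Almost everything here is formal once the identity $\varphi_A=(\varphi_c\otimes\varphi_d)\circ\Delta$ is established, and that is the one step needing genuine care: it uses the double-commutation of the Iwasawa factors (Theorem \ref{thm-Iwasawa}) in the concrete realization \eqref{eq-Irred-Iwasawa} to guarantee that the right-hand side is multiplicative, and — since $\varphi_A$ and $\varphi_d$ are only homomorphisms, not $*$-homomorphisms — the generator comparison must be carried out on all of $a_q,a_q^*,c_q,c_q^*$ rather than just on $a_q,c_q$.
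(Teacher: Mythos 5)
Your argument is correct, but for part (1) it takes a genuinely different route from the paper. The paper proves $v^{(s)}=v_c^{(s)}v_d^{(s)}$ by induction on $s$ using the $SU_q(2)$ fusion rule $u^{(1/2)}\tp u^{(s)}\cong u^{(s+1/2)}\oplus u^{(s-1/2)}$: applying $id\otimes\varphi$, $id\otimes\varphi_c$, $id\otimes\varphi_d$ to both sides of the decomposition and observing that the \emph{same} intertwining unitaries occur throughout, so that the desired equality (not merely unitary equivalence) can be read off from the first direct summand; part (2) is handled by the same fusion-rule induction. You instead prove the single Hopf-algebraic identity $\varphi_A=m\circ(\varphi_c\otimes\varphi_d)\circ\Delta$, whose multiplicativity rests on the commutation of the ranges of $\varphi_c$ and $\varphi_d$ (visible in the concrete Iwasawa realization \eqref{eq-Irred-Iwasawa}, or from Theorem \ref{thm-Iwasawa}), and whose verification on the four algebra generators $a_q,a_q^*,c_q,c_q^*$ is a finite computation — correctly insisting on all four, since $\varphi_A$ and $\varphi_d$ are not $*$-maps; then $v^{(s)}=v_c^{(s)}v_d^{(s)}$ for every $s$ drops out at once from $\Delta(u^{(s)}_{kl})=\sum_j u^{(s)}_{kj}\otimes u^{(s)}_{jl}$, and your proof of (2) uses directly that $\varphi_c$ extends to a unital $*$-homomorphism on $C(SU_q(2))$ (a fact the paper records at the end of Section \ref{subsec-Iwasawa}) together with unitarity of $u^{(s)}$. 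Part (3) is essentially identical in both treatments (unitality of $v_c^{(s)}$ gives $\|v^{(s)}\|=\|v_d^{(s)}\|$, and $\varphi_d=1_{B(H_c)}\otimes\varphi_{A_{\pi_d}}$ gives the last equality of norms, all filtered through Proposition \ref{prop-cb-norm-weighted}). What the comparison buys: the paper's proof stays entirely at the level of finite-dimensional representation theory and needs no convolution formalism, but requires the "same unitaries" bookkeeping at each inductive step; your proof is uniform in $s$, makes the structural reason for the factorization transparent (it is exactly the statement that $\pi$ restricted along $Q$ is the convolution of its compact and discrete Iwasawa legs), and does not use the specific fusion rules of $SU_q(2)$, so it would adapt verbatim to other quantum doubles where an analogous Iwasawa decomposition with commuting legs is available.
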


\begin{proof}
(1) For $s=0$ we clearly have $v^{(0)} = \varphi(1) = \varphi_c(1)\varphi_d(1) = v_c^{(0)}v_d^{(0)}$. The case $s=1/2$ is exactly the Iwasawa decomposition itself, see Theorem \ref{thm-Iwasawa}.

For $s\ge 1$ we can use the fusion rule of $SU_q(2)$ for the conclusion. We check the case $s=1$ as a demonstration. From the fusion rule we have
\begin{align*}
\lefteqn{(id\otimes \varphi)(u^{(1)}) \oplus \varphi(1)}\\
	& = (id\otimes \varphi)(u^{(1)} \oplus 1)\\
	& \cong (id\otimes \varphi)(u^{(1/2)} \tp u^{(1/2)})\\
	& = (id\otimes \varphi)(u^{(1/2)}) \tp (id\otimes \varphi)(u^{(1/2)})\\
	& = (id\otimes \varphi_c)(u^{(1/2)}) (id\otimes \varphi_d)(u^{(1/2)}) \tp (id\otimes \varphi_c)(u^{(1/2)}) (id\otimes \varphi_d)(u^{(1/2)})\\
	& = [(id\otimes \varphi_c)(u^{(1/2)}) \tp (id\otimes \varphi_c)(u^{(1/2)}) ][(id\otimes \varphi_d)(u^{(1/2)}) \tp (id\otimes \varphi_d)(u^{(1/2)}) ]\\
	& \cong (id\otimes \varphi_c)(u^{(1)} \oplus 1) (id\otimes \varphi_d)(u^{(1)} \oplus 1)\\
	& = (id\otimes \varphi_c)(u^{(1)})(id\otimes \varphi_d)(u^{(1)}) \oplus \varphi_c(1)\varphi_d(1)
\end{align*}
Note that we are using the same unitaries for the equivalences so that we actually get the equality
	$$v^{(1)} = (id\otimes \varphi)(u^{(1)}) = (id\otimes \varphi_c)(u^{(1)})(id\otimes \varphi_d)(u^{(1)}) = v_c^{(1)} v_d^{(1)}$$ by looking at the first summand.

\vspace{0.3cm}

(2) This can be checked using the fusion rule as above.

\vspace{0.3cm}

(3) Let us regard $c_{00}(\widehat{SU_q(2)})$ as a subspace of $A(SU_q(2), w)$ via the embedding \eqref{eq-embed-A(G,w)}, then the corresponding cb-norm of $\varphi\circ \F$ is (Proposition \ref{prop-cb-norm-weighted}) given by
	$$\|\varphi\circ \F\|_{cb} = \sup_{s\in \frac{1}{2}\z_+}\frac{\|v^{(s)}\|_{M_{2s+1}\otimes B(H)}}{w(s)}.$$
Since $v_c^{(s)}$ is a unitary for any $s\in \frac{1}{2}\z_+$ we can easily see that $\|\varphi\circ \F\|_{cb} = \|\varphi_d\circ \F\|_{cb}$. Moreover, we have $\varphi_d(x) = I_{B(H_c)}\otimes \varphi_{A_{\pi_d}}(x)$, $x\in {\rm Pol}(SU_q(2))$, so that we have $\|\varphi_d\circ \F\|_{cb} = \|\varphi_{A_{\pi_d}}\circ \F : A(SU_q(2),w) \to B(H_d)\|_{cb}$.

\end{proof}	

The above Proposition \ref{prop-discrete-only} explains why it is enough to focus only on the case when $A$ itself is an irreducible $AN_q$-matrix, so that we can assume that $A$ is of the form of $A_s$ for $s \in \frac{1}{2}\z_+$ from \eqref{A_s} with the associated unital homomorphism
	$$\varphi_s : {\rm Pol}(SU_q(2)) \to M_{2s+1}.$$
Before we proceed to the main result we need to understand more on irreducible $AN_q$-matrices. For two $AN_q$-matrices $A \in M_2(B(H))$ and $B \in M_2(B(K))$ we define $A \bp B \in M_2(B(H\otimes K))$, which is also an $AN_q$-matrix (\cite[Proposition 1.2]{podles+woronowicz90}).

	\begin{lem} \label{lem-technical}
	We have the following.
		\begin{enumerate}
			
			\item For each $s\in \frac{1}{2}\z_+$ with $s\ge 1/2$ we have
				$$A_{1/2} \bp A_s \cong A_{s+1/2} \oplus A_{s-1/2}.$$
			
			\item Let $A \in M_2(B(H))$ and $B \in M_2(B(K))$ be $AN_q$-matrices. Let $\varphi_A$, $\varphi_B$ and $\varphi_{A \bp B}$ be the homomorphisms from ${\rm Pol}(SU_q(2))$ associated to $A$, $B$ and $A \bp B$, respectively with the corresponding elements $v_A$, $v_B$ and $v_{A \bp B}$. Then we have
				$$v_{A \bp B}^{(t)} = v_A^{(t)} \bp v_B^{(t)}\;\;\text{ for each }\;\; t\in \frac{1}{2}\z_+.$$
		\end{enumerate}	
	\end{lem}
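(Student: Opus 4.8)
The plan is to prove (1) by decomposing $A_{1/2}\bp A_s$ as a $*$-representation of $c_0(\widehat{SU_q(2)})$ and reading off its irreducible constituents from the spectrum of its top-left corner, and then to obtain (2) from a homomorphism identity checked on the four algebra generators of ${\rm Pol}(SU_q(2))$.

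For (1), I would first note that by \cite[Proposition 1.2]{podles+woronowicz90} the matrix $C:=A_{1/2}\bp A_s$ is again an $AN_q$-matrix, acting on the finite-dimensional space $\ell^2_2\otimes\ell^2_{2s+1}$. By Proposition \ref{prop-SLq2-matrix} it corresponds to a non-degenerate $*$-representation of $c_0(\widehat{SU_q(2)})=c_0\text{-}\bigoplus_t M_{2t+1}$; since this representation is finite-dimensional it is completely reducible, and as the irreducible $AN_q$-matrices are exactly the $A_t$ we get $C\cong\bigoplus_i A_{t_i}$ for a finite multiset $\{t_i\}\subseteq\frac12\z_+$ with $\sum_i(2t_i+1)=2(2s+1)=4s+2$. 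To pin down $\{t_i\}$ I would compare top-left corners. From $(A_s)_{21}=0$ a one-line computation gives $C_{11}=a^d_{1/2}\otimes a^d_s$, a diagonal operator whose spectrum, counted with multiplicity, consists of the numbers $q^{j+k}$ with $j\in\{-\tfrac12,\tfrac12\}$ and $-s\le k\le s$; thus $q^{s+1/2}$ and $q^{-s-1/2}$ occur once and every intermediate $q^m$ occurs twice. On the other hand $\bigl(\bigoplus_i A_{t_i}\bigr)_{11}=\bigoplus_i a^d_{t_i}$ has spectrum $\bigsqcup_i\{q^k:-t_i\le k\le t_i\}$, and for diagonal operators equality of spectra with multiplicity is equivalent to unitary equivalence. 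Since $|q|^{-s-1/2}$ is the strictly largest modulus occurring, and only with multiplicity one, exactly one $t_i$ equals $s+\tfrac12$; removing the corresponding $a^d_{s+1/2}$ leaves precisely the spectrum of $a^d_{s-1/2}$, so iterating the same argument yields one further summand $A_{s-1/2}$ and nothing more. Hence $A_{1/2}\bp A_s\cong A_{s+1/2}\oplus A_{s-1/2}$.

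For (2), I would show the identity $\varphi_{A\bp B}=(\varphi_A\otimes\varphi_B)\circ\Delta$ of homomorphisms ${\rm Pol}(SU_q(2))\to B(H)\odot B(K)$ (regarded inside $B(H\otimes K)$). As ${\rm Pol}(SU_q(2))$ is generated as an algebra by $a_q,a_q^*,c_q,c_q^*$ it suffices to compare the two homomorphisms on these four elements. Recalling that $\varphi_A(a_q),\varphi_A(-qc_q^*),\varphi_A(c_q),\varphi_A(a_q^*)$ are exactly the four entries of $A$ (and likewise for $B$ and for $A\bp B$), and that the $(i,j)$-entry of $A\bp B=A_{12}B_{13}$ is $\sum_k A_{ik}\otimes B_{kj}$, the four defining equations for the entries of $A\bp B$ are precisely the images under $\varphi_A\otimes\varphi_B$ of $\Delta(a_q)=a_q\otimes a_q-qc_q^*\otimes c_q$, $\Delta(c_q)=c_q\otimes a_q+a_q^*\otimes c_q$ and of their adjoints. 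Granting the identity, for every $t$ and all $k,l$ one has $(v^{(t)}_{A\bp B})_{kl}=\varphi_{A\bp B}(u^{(t)}_{kl})=(\varphi_A\otimes\varphi_B)\bigl(\sum_j u^{(t)}_{kj}\otimes u^{(t)}_{jl}\bigr)=\sum_j\varphi_A(u^{(t)}_{kj})\otimes\varphi_B(u^{(t)}_{jl})=(v^{(t)}_A\bp v^{(t)}_B)_{kl}$, which is the assertion.

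The delicate part is the reduction step in (1): it depends on the facts imported from \cite{podles+woronowicz90} that the irreducible $AN_q$-matrices are precisely the $A_t$ and that matrix $\leftrightarrow$ representation is a bijection respecting unitary equivalence and direct sums, so that $C\cong\bigoplus_i A_{t_i}$ is legitimate and $\{t_i\}$ is genuinely forced by the top-left corner. More conceptually, one can instead check that $A_{1/2}\bp A_s$ corresponds under Proposition \ref{prop-SLq2-matrix} to $(\pi_{A_{1/2}}\otimes\pi_{A_s})\circ\DcoP$ — because $\DcoP$ acts on the universal $AN_q$-matrix with entries $a^d,n^d,0,(a^d)^{-1}$ by matrix multiplication — and then invoke Proposition \ref{prop-fusion} together with the fusion rule $u^{(1/2)}\tp u^{(s)}\cong u^{(s+1/2)}\oplus u^{(s-1/2)}$ to identify this representation with $\pi_{A_{s+1/2}}\oplus\pi_{A_{s-1/2}}$; this route also makes part (2) transparent. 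Part (2) itself is then only a bookkeeping verification once the homomorphism identity is in hand.
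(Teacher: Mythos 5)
Your proposal is correct. For part (1) you follow essentially the paper's route: the paper also observes that $A_{1/2}\bp A_s$ is a finite-dimensional $AN_q$-matrix, hence a direct sum of the irreducible ones $A_t$, and concludes by "comparison of the spectrum of the $(1,1)$-entry on both sides"; your contribution is merely to spell out the multiplicity bookkeeping ($|q|^{-s-1/2}$ occurring once forces one summand $A_{s+1/2}$, and what remains is exactly the spectrum of $a^d_{s-1/2}$), which is a welcome elaboration of the same argument. For part (2) you take a genuinely different and arguably cleaner route. The paper argues via the fusion rule: it writes $v^{(t)}_{A\bp B}\oplus\dots$ as $(id\otimes\varphi_{A\bp B})$ of a decomposition of $u^{(1/2)}\tp u^{(t')}$, uses the identity \eqref{eq-tensor-top-bot}, and reads off the equality from the first summand, demonstrating the case $t=1$ with the (slightly delicate) remark that "the same unitaries" implement all the equivalences. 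You instead prove the single homomorphism identity $\varphi_{A\bp B}=(\varphi_A\otimes\varphi_B)\circ\Delta$ by checking it on the algebra generators $a_q,c_q,a_q^*,c_q^*$ (note that since the $\varphi$'s are not $*$-homomorphisms the values on $a_q^*,c_q^*$ must be checked separately via $\Delta(a_q^*),\Delta(c_q^*)$, but this is exactly the routine verification you describe and it does go through), and then evaluate on the coefficients $u^{(t)}_{kl}$ using $\Delta(u^{(t)}_{kl})=\sum_j u^{(t)}_{kj}\otimes u^{(t)}_{jl}$. This yields all $t\in\frac12\z_+$ uniformly, avoids the induction-by-fusion and the unitary-tracking, and makes transparent why the identity is purely a coalgebra fact; the paper's version, in turn, stays within the fusion-rule formalism it reuses elsewhere (e.g.\ in Proposition \ref{prop-discrete-only} and Theorem \ref{thm-main-tech}). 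Your alternative sketch of (1) via $\DcoP$ and Proposition \ref{prop-fusion} is plausible but would need the identification of $A_{1/2}\bp A_s$ with $(\pi_{A_{1/2}}\otimes\pi_{A_s})\circ\DcoP$ justified on affiliated elements; since your primary spectral argument is complete, nothing hinges on it.
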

\begin{proof}
(1) We need to show the following unitary equivalence.
		\begin{align*}\label{eq-AN-fusion}
		\lefteqn{\begin{bmatrix} {a_{1/2}^d} & {n_{1/2}^d}\\ 0 & ({a_{1/2}^d})^{-1} \end{bmatrix} \bp \begin{bmatrix} {a_s^d} & {n_s^d}\\ 0 & ({a_s^d})^{-1} \end{bmatrix}}\\
		& \cong \begin{bmatrix} {a_{s+1/2}^d} & {n_{s+1/2}^d}\\ 0 & ({a_{s+1/2}^d})^{-1}\end{bmatrix} \oplus \begin{bmatrix} {a_{s-1/2}^d} & {n_{s-1/2}^d}\\ 0 & ({a_{s-1/2}^d})^{-1}\end{bmatrix}.\nonumber
		\end{align*}
First, we recall that spec ${a_s^d} = \{q^{-s}, q^{-s+1}, \cdots, q^s \}$. Since the left hand side is also a $AN_q$-matrix acting on finite dimensional Hilbert space it must be decomposed into a direct sum of the irreducible ones. Thus, the comparison of the spectrum of the $(1,1)$-entry on both sides gives us the conclusion.

\vspace{0.5cm}

(2) We use the fusion rule again for the conclusion. The cases $t=0$ and $t=1/2$ are trivial. We check the case of $t=1$ as a demonstration. From the fusion rule and \eqref{eq-tensor-top-bot} we have
	\begin{align*}
	\lefteqn{v^{(1)}_{A \bp B} \oplus 1}\\
	& = (id\otimes \varphi_{A \bp B})(u^{(1)}) \oplus \varphi_{A \bp B}(1) = (id\otimes \varphi_{A \bp B})(u^{(1)} \oplus 1)\\
	& \cong (id\otimes \varphi_{A \bp B})(u^{(1/2)} \tp u^{(1/2)}) = (id\otimes \varphi_{A \bp B})(u^{(1/2)}) \tp (id\otimes \varphi_{A \bp B})(u^{(1/2)})\\
	& = (A \bp B) \tp (A \bp B) = (A \tp A) \bp (B \tp B)\\
	& = ((id\otimes \varphi_A)(u^{(1/2)}) \tp (id\otimes \varphi_A)(u^{(1/2)})) \bp ((id\otimes \varphi_B)(u^{(1/2)}) \tp (id\otimes \varphi_B)(u^{(1/2)}))\\
	& = ((id\otimes \varphi_A)(u^{(1/2)} \tp u^{(1/2)})) \bp ((id\otimes \varphi_B)(u^{(1/2)} \tp u^{(1/2)}))\\
	& \cong (id\otimes \varphi_A)(u^{(1)} \oplus 1) \bp (id\otimes \varphi_B)(u^{(1)} \oplus 1)\\
	& = (v^{(1)}_A \oplus 1) \bp (v^{(1)}_B \oplus 1) = (v^{(1)}_A \bp v^{(1)}_B) \oplus 1
	\end{align*}
Note that we are using the same unitaries for the equivalences so that we actually get the equality
	$$v^{(1)}_{A \bp B} = v^{(1)}_A \bp v^{(1)}_B$$
by looking at the first summand.
\end{proof}

Now we have the main technical result.
	\begin{thm}\label{thm-main-tech}
	Let $v_s = (v^{(t)}_s)_{t \in \frac{1}{2}\z_+} \in \prod_{t \in \frac{1}{2}\z_+}M_{2t+1} \otimes M_{2s+1}$ be the element associated to the $AN_q$-matrix $A_s$ from \eqref{A_s}. Then, there is a constant $C_q$ depending only on $q$ such that
		$$\abs{q}^{-2st} \le ||v^{(t)}_s||_\infty \le C_q^s\abs{q}^{-2st},\;\; \forall s,t \in \frac{1}{2}\z_+.$$
	\end{thm}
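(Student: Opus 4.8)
\emph{Overall plan.} I would prove the two inequalities separately; this route in fact yields the stronger statement $\|v_s^{(t)}\|_\infty=|q|^{-2st}$, so the constant $C_q$ in the theorem may be taken to be $1$. The entire weight of the argument lies in one exact norm identity, namely $\|A_s\|_\infty=|q|^{-s}$ for every $s\in\frac12\z_+$; granting this, everything else follows quickly from the fusion rule of $SU_q(2)$, transported through the multiplicative map $(id\otimes\varphi_{A_s})$ exactly as in Proposition~\ref{prop-discrete-only} and Lemma~\ref{lem-technical}.

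\emph{The key computation.} Since the $(2,2)$-corner $(a^d_s)^{-1}$ of $A_s$ already has norm $|q|^{-s}$, only the bound $\|A_s\|_\infty\le|q|^{-s}$ is at issue, and a priori an upper-triangular operator can be far larger in norm than its diagonal part. I would compute $A_s^*A_s$ directly. Writing $a=a^d_s$ and $n=n^d_s$ (so $ne_k=\mu_ke_{k+1}$ with $\mu_k^2=|q|^{-2s}-|q|^{-2k}-|q|^{2k+2}+|q|^{2s+2}$), one has
\[
A_s^*A_s=\begin{bmatrix} a^*a & a^*n \\ n^*a & n^*n+(a^*a)^{-1}\end{bmatrix},
\]
and because $a$ is diagonal and $n$ is a one-step weighted shift, this hermitian operator splits into an orthogonal direct sum of two $1\times1$ blocks, each equal to the scalar $|q|^{-2s}$ (on $e_{-s}$ in the first copy and on $e_s$ in the second), together with $2s$ blocks of size $2$, one on each pair $\{(1,e_{k+1}),(2,e_k)\}$. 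The point is that the defining formula for $\mu_k$ makes the trace of each $2\times2$ block equal to the $k$-independent value $|q|^{-2s}+|q|^{2s+2}$, while its determinant is always $q^2$; since $(|q|^{-2s}+|q|^{2s+2})^2-4q^2=(|q|^{-2s}-|q|^{2s+2})^2$, each such block has eigenvalues $|q|^{-2s}$ and $|q|^{2s+2}$. Hence $\|A_s\|_\infty^2=\|A_s^*A_s\|_\infty=|q|^{-2s}$. (For $q<0$ one reads $q^k$ as $|q|^xe^{i\pi x}$; only the moduli enter the relevant traces and determinants, so nothing changes. Alternatively one may carry this out only for $s=\tfrac12$, a finite matrix computation, and propagate it to all $s$ via $\|A_s\|_\infty\le\|A_{1/2}\|_\infty^{2s}$, which follows from Lemma~\ref{lem-technical} and submultiplicativity of $\bp$.)

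\emph{The upper bound.} For fixed $s$ I would induct on $t$ in steps of $\tfrac12$. One has $v_s^{(0)}=\varphi_{A_s}(1)=I$ and $v_s^{(1/2)}=(id\otimes\varphi_{A_s})(u^{(1/2)})=A_s$, so $\|v_s^{(0)}\|_\infty=1$ and $\|v_s^{(1/2)}\|_\infty=|q|^{-s}$. Applying $(id\otimes\varphi_{A_s})$ to the fusion identity $u^{(1/2)}\tp u^{(t)}\cong u^{(t+1/2)}\oplus u^{(t-1/2)}$, with the same scalar intertwining unitary, exactly as in the proof of Proposition~\ref{prop-discrete-only}(1), gives $v_s^{(1/2)}\tp v_s^{(t)}\cong v_s^{(t+1/2)}\oplus v_s^{(t-1/2)}$, whence
\[
\|v_s^{(t+1/2)}\|_\infty\le\|v_s^{(1/2)}\tp v_s^{(t)}\|_\infty\le\|v_s^{(1/2)}\|_\infty\,\|v_s^{(t)}\|_\infty=|q|^{-s}\,\|v_s^{(t)}\|_\infty .
\]
The induction then yields $\|v_s^{(t)}\|_\infty\le|q|^{-2st}$ for all $s,t$, the required upper bound (with $C_q=1$).

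\emph{The lower bound, and the main obstacle.} It is standard for $SU_q(2)$, with the convention $u^{(1/2)}=\left[\begin{smallmatrix} a_q&-qc_q^*\\ c_q&a_q^*\end{smallmatrix}\right]$, that one of the diagonal matrix coefficients of $u^{(t)}$ equals $a_q^{2t}$ (equivalently, another equals $(a_q^*)^{2t}$); this follows by induction from the fusion rule. Since $\varphi_{A_s}$ sends $a_q$ to $a^d_s$ and $a_q^*$ to $(a^d_s)^{-1}$, the corresponding entry of $v_s^{(t)}$ is $(a^d_s)^{\pm2t}$, whose norm is $\|a^d_s\|_\infty^{2t}=|q|^{-2st}$ because $a^d_s$ is diagonal with $\|a^d_s\|_\infty=|q|^{-s}$. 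As the norm of an operator matrix dominates the norm of any of its entries, $\|v_s^{(t)}\|_\infty\ge|q|^{-2st}$. The only genuinely nontrivial step is the exact identity $\|A_s\|_\infty=|q|^{-s}$: it is the specific form of the shift coefficients $\mu_k$ — forcing every $2\times2$ summand of $A_s^*A_s$ to have the same trace — that keeps the estimate tight, and everything else is bookkeeping of the kind already performed in the paper.
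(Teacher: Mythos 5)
Your proposal is correct, and it takes a genuinely different route from the paper at the crucial point, the upper bound. The paper bounds $\|v^{(t)}_{1/2}\|_\infty=\|(id\otimes\varphi_{1/2})(u^{(t)})\|_\infty\le C_q\abs{q}^{-t}$ by means of Koornwinder's explicit formula for the matrix coefficients $u^{(t)}_{nm}$ (only the terms $i=m-n\in\{0,1\}$ survive under $\varphi_{1/2}$), and then inducts on $s$ via Lemma \ref{lem-technical}, $v^{(t)}_{1/2}\bp v^{(t)}_s\cong v^{(t)}_{s+1/2}\oplus v^{(t)}_{s-1/2}$, which produces the constant $C_q^s$. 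You instead compute the exact norm $\|A_s\|_\infty=\abs{q}^{-s}$ by diagonalizing $A_s^*A_s$ — and your block analysis is right: the $2\times2$ blocks on $\{(1,e_{k+1}),(2,e_k)\}$ have $k$-independent trace $\abs{q}^{-2s}+\abs{q}^{2s+2}$ and determinant $q^2$, hence eigenvalues $\abs{q}^{-2s}$ and $\abs{q}^{2s+2}$, and the two leftover $1\times1$ blocks give $\abs{q}^{-2s}$ — and then you induct on $t$ via the fusion rule $v_s^{(1/2)}\tp v_s^{(t)}\cong v_s^{(t+1/2)}\oplus v_s^{(t-1/2)}$ with submultiplicativity of $\tp$. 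This yields $\|v^{(t)}_s\|_\infty\le\abs{q}^{-2st}$, i.e.\ $C_q=1$, hence the exact equality $\|v^{(t)}_s\|_\infty=\abs{q}^{-2st}$, which is strictly sharper than the theorem; notably, the paper's closing Remark states that the $t$-direction induction ``as in the character case'' only gives $C_q^t\abs{q}^{-2st}$, and your exact norm identity for $A_s$ is precisely what removes that obstruction. Your lower bound is essentially the paper's in disguise: the paper compresses $\varphi_s$ to the $(-s,-s)$ diagonal corner to get a character with $\Phi_s(a_q)=q^{-s}$ and invokes Theorem \ref{thm-SUq(2)-char}, while you identify the corresponding entry of $v^{(t)}_s$ directly as $(a^d_s)^{\pm 2t}$; for the assertion that a diagonal coefficient of $u^{(t)}$ equals $a_q^{2t}$ you could simply cite the Koornwinder formula quoted in the paper with $n=m=-t$ (only $i=0$ survives), rather than leaving the fusion-rule induction implicit — a minor point. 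Also fine is your fallback of proving only $\|A_{1/2}\|_\infty=\abs{q}^{-1/2}$ and propagating via $A_{1/2}\bp A_s\cong A_{s+1/2}\oplus A_{s-1/2}$.
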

\begin{proof}
From the above Lemma \ref{lem-technical} we have
	$$v^{(t)}_{1/2} \bp v^{(t)}_s \cong v^{(t)}_{s+1/2} \oplus v^{(t)}_{s-1/2}$$
for each $s,t \in \frac{1}{2}\z_+$ with $s\ge 1/2$.	
Now by the fact that the matrix multiplication $M_n \prt M_n \to M_n, \;\; A\otimes B \mapsto AB$ is a complete contraction we have
	$$||v^{(t)}_{1/2} \bp v^{(t)}_s||_\infty \le ||v^{(t)}_{1/2}||_\infty \cdot ||v^{(t)}_s||_\infty.$$
Then the unitary equivalence tells us that
	$$||v^{(t)}_{s+1/2}||_\infty \le ||v^{(t)}_{1/2}||_\infty \cdot ||v^{(t)}_s||_\infty,$$
so that we have
	$$||v^{(t)}_s||_\infty \le ||v^{(t)}_{1/2}||^{2s}_\infty = ||(id \otimes \varphi_{1/2})(u^{(t)})||^{2s}_\infty.$$
From this point on we would like to estimate $||(id \otimes \varphi_{1/2})(u^{(t)})||_\infty$ using the following detailed information on the matrix coefficient of $u^{(t)}$. For $n,m\in \{-t, -t+1, \cdots, t-1, t\}$ we have (\cite[Proposition 5.2]{Koo})
	\begin{align*}
		u^{(t)}_{nm} & = \binom{2t}{t-n}^{\frac{1}{2}}_{q^{-2}}\binom{2t}{t-m}^{-\frac{1}{2}}_{q^{-2}}\sum^{(t-n) \land (t+m)}_{i=0 \lor (m-n)}q^{(t-n-i)(n-m+2i)}q^{-i(n-m+i)}\\
		&\;\;\;\; \times \binom{t-n}{i}_{q^{-2}}\binom{t+n}{t+m-i}_{q^{-2}} (-qc^*_q)^ic^{n-m+i}_qa^{t-n-i}_q(a^*_q)^{t+m-i},
	\end{align*}
where $\binom{a}{b}_q$ are the $q$-binomial coefficients given by
	$$\binom{a}{b}_q = \frac{\prod^a_{j=1}(1-q^j)}{\prod^b_{j=1}(1-q^j) \cdot \prod^{a-b}_{j=1}(1-q^j)}.$$	
Since $\varphi_{1/2}(c_q) = 0$ and $\varphi_{1/2}(-qc^*_q) = n^d_{1/2} = \begin{bmatrix} 0 & 0\\ q^{-\frac{1}{2}}(1-q^2) & 0 \end{bmatrix}$ with $(n^d_{1/2})^2 = 0$, we can easily see that the only surviving terms in the summation of $\varphi_{1/2}(u^{(t)}_{nm})$ are the cases $i = m-n =0$  and $i = m-n = 1$. When $i = m-n =0$ we get
	$$\varphi_{1/2}(u^{(t)}_{nn}) = (a^d_{1/2})^{-2n},$$
where 	$a^d_{1/2} = \begin{bmatrix} q^{-\frac{1}{2}} & 0\\ 0 & q^{\frac{1}{2}} \end{bmatrix}$. When $i = m-n =1$ we have
	\begin{align*}
	\lefteqn{\varphi_{1/2}(u^{(t)}_{n,n+1})}\\
	& = \binom{2t}{t-n}^{\frac{1}{2}}_{q^{-2}}\binom{2t}{t-n-1}^{-\frac{1}{2}}_{q^{-2}} \binom{t-n}{1}_{q^{-2}} q^{t-n-1}n^d_{1/2}(a^d_{1/2})^{-2n-1}\\
	& = \left(\frac{1-q^{-2(t+n+1)}}{1-q^{-2(t-n)}}\right)^{\frac{1}{2}} \cdot \frac{1-q^{-2(t-n)}}{1-q^{-2}} \cdot  q^{t-n-1} \cdot  \begin{bmatrix} q^{-\frac{1}{2}}(1-q^2)q^{n+\frac{1}{2}} & 0\\ 0 & 0 \end{bmatrix}.
	\end{align*}
Thus, we have
	\begin{align*}
	\lefteqn{||(id \otimes \varphi_{1/2})(u^{(t)})||_\infty}\\
	& \le ||\sum_{-l \le n \le l}e_{nn} \otimes \varphi_{1/2}(u^{(t)}_{nn})||_\infty + ||\sum_{-l \le n < l}e_{n,n+1} \otimes \varphi_{1/2}(u^{(t)}_{n,n+1})||_\infty\\
	& = \max_{-l \le n \le l} ||\varphi_{1/2}(u^{(t)}_{nn})||_\infty + \max_{-l \le n < l} ||\varphi_{1/2}(u^{(t)}_{n,n+1})||_\infty\\
	& = C_q |q|^{-t}
	\end{align*}
for some constant $C_q$ depending only on $q$. This gives us the upper bound we wanted.

For the lower bound we consider the restriction down to one of the diagonals. First, we observe that $\varphi_s({\rm Pol}(SU_q(2))) \subseteq L_{2s+1}$, where $L_n$ is the subalgebra of $M_n$ consisting of lower triangular matrices. The advantage of the range of $\varphi_s$ being lower triangular matrices is that the canonical projection
	$$P_{2s+1}: L_{2s+1} \to D_{2s+1}$$
onto the algebra of diagonal matrices $D_{2s+1}$ is multiplicative. Finally, by composing the evaluation functional 
	$$\psi_s: D_{2s+1} \to \Comp,\; (a_k)_{-s \le k \le s} \mapsto a_{-s},$$
which is also multiplicative,  with $\varphi_s$ and $E_{2s+1}$ we get a non-zero character
	$$\Phi_s : {\rm Pol}(SU_q(2)) \stackrel{\varphi_s}{\to} L_{2s+1} \stackrel{E_{2s+1}}{\to} D_{2s+1} \stackrel{\psi_s}{\to} \Comp$$
satisfying $\Phi_s(a_q) = q^{-s}, \Phi_s(a^*_q) = q^s, \Phi_s(c_q) = \Phi_s(c^*_q) = 0$. Then Theorem \ref{thm-SUq(2)-char} gives us the lower bound.
\end{proof}

Combining all the above we get the main result of this section.

\begin{thm}
Let $w_\beta$, $\beta \ge 1$ be the exponential weight on the dual of $SU_q(2)$ from Example \ref{ex-central-weights}, i.e. $w_\beta(s) = \beta^{\tau(s)} = \beta^{2s}$, $s\in \frac{1}{2}\z_+$. Let $\varphi_s$ be the unital homomorphism associated to the $AN_q$-matrix $A_s$, $s\in \frac{1}{2}\z_+$ from \eqref{A_s}. Then, the transferred homomorphism $\tilde{\varphi}_s$ extends to a (completely) bounded map on $A(SU_q(2), w_\beta)$ if and only if $\abs{q}^{-s} \le \beta$. Moreover, we have
\begin{align*}
\lefteqn{{\rm sp}\,C_0(SL_q(2,\Comp))}\\
& \cong {\rm sp}\,C(SU_q(2))\times \bigcup_{\beta\ge 1}\{A_s: s\in \frac{1}{2}\z_+,\; \text{$\tilde{\varphi}_s$ is bounded on $A(SU_q(2), w_\beta)$}\}
\end{align*}
as topological spaces.
\end{thm}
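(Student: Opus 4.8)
The plan is to reduce everything to the two-sided estimate of Theorem \ref{thm-main-tech} together with the cb-norm criterion of Proposition \ref{prop-cb-norm-weighted}. The latter says that $\tilde\varphi_s$ extends to a completely bounded map on $A(SU_q(2), w_\beta)$ if and only if $\sup_{t\in\frac{1}{2}\z_+} w_\beta(t)^{-1}||v_s^{(t)}||_\infty<\infty$, and with $w_\beta(t)=\beta^{2t}$ this supremum equals $\sup_t \beta^{-2t}||v_s^{(t)}||_\infty$, so the first assertion becomes a comparison of growth rates.

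For the ``if'' direction I would assume $|q|^{-s}\le\beta$ and insert the upper bound $||v_s^{(t)}||_\infty\le C_q^s|q|^{-2st}$ of Theorem \ref{thm-main-tech}: then $\beta^{-2t}||v_s^{(t)}||_\infty\le C_q^s(|q|^{-s}/\beta)^{2t}\le C_q^s$ for every $t\in\frac{1}{2}\z_+$, since the base $|q|^{-s}/\beta$ is at most $1$ and the exponent is non-negative. Hence the supremum is finite, $\tilde\varphi_s$ is completely bounded (in particular bounded), and in fact $||\tilde\varphi_s||_{cb}\le C_q^s$. For the converse I would only assume $\tilde\varphi_s$ extends to a \emph{bounded} map and recover the constraint via the non-zero character $\Phi_s=\psi_s\circ P_{2s+1}\circ\varphi_s$ constructed in the proof of Theorem \ref{thm-main-tech}: recall that $\varphi_s$ takes values in the lower-triangular matrices $L_{2s+1}$, that the projection $P_{2s+1}$ onto the diagonal is multiplicative and bounded, and that $\psi_s$ (evaluation at the $(-s)$-entry) is multiplicative and bounded. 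Then $\Phi_s\circ\F=\psi_s\circ P_{2s+1}\circ\tilde\varphi_s$ is bounded on $A(SU_q(2),w_\beta)$, so $\Phi_s\in{\rm Spec}\,A(SU_q(2),w_\beta)$; since $\Phi_s(a_q)=q^{-s}$, Theorem \ref{thm-SUq(2)-char} forces $|q|^{-s}\le\beta$. (Alternatively, a bounded map into the finite-dimensional algebra $M_{2s+1}$ is automatically completely bounded, so one could instead apply Proposition \ref{prop-cb-norm-weighted} with the lower bound $||v_s^{(t)}||_\infty\ge|q|^{-2st}$ directly.) This gives the stated equivalence and, in passing, shows that ``bounded'' and ``completely bounded'' carry the same content here.

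For the ``Moreover'' part I would combine the homeomorphism \eqref{eq-spectrum}, that is ${\rm sp}\,C_0(SL_q(2,\Comp))\cong{\rm sp}\,C(SU_q(2))\times{\rm sp}\,c_0(\widehat{SU_q(2)})$, with Proposition \ref{prop-SLq2-matrix}, which identifies ${\rm sp}\,c_0(\widehat{SU_q(2)})$ homeomorphically with $\{A_s:s\in\frac{1}{2}\z_+\}$ via $s\mapsto A_s$. It then suffices to observe that $\bigcup_{\beta\ge1}\{A_s: s\in\frac{1}{2}\z_+,\ \tilde\varphi_s\text{ bounded on }A(SU_q(2),w_\beta)\}$ is all of $\{A_s:s\in\frac{1}{2}\z_+\}$: by the first assertion $A_s$ lies in the $\beta$-indexed set whenever $\beta\ge|q|^{-s}$, so taking $\beta=|q|^{-s}$ already exhibits each $A_s$ in the union, while the reverse inclusion is trivial. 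Hence the union, with the subspace topology from ${\rm sp}\,c_0(\widehat{SU_q(2)})$, is that whole space, and the claimed homeomorphism follows.

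I do not anticipate a genuine obstacle: the analytic content is entirely contained in Theorem \ref{thm-main-tech}, which already supplies matching upper and lower bounds and the auxiliary character $\Phi_s$. The only point that needs care is the one flagged above, namely checking that plain boundedness of $\tilde\varphi_s$ already forces $|q|^{-s}\le\beta$ so that the bounded and completely bounded versions of the statement coincide; the character argument (or finite-dimensionality of the target) settles this.
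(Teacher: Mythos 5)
Your proposal is correct and follows essentially the same route as the paper: Proposition \ref{prop-cb-norm-weighted} combined with the two-sided estimate of Theorem \ref{thm-main-tech} yields the boundedness criterion (completely bounded and bounded coinciding because the target $M_{2s+1}$ is finite dimensional), and \eqref{eq-spectrum} together with Proposition \ref{prop-SLq2-matrix} gives the identification of ${\rm sp}\,C_0(SL_q(2,\Comp))$. Your character argument for the converse is only a repackaging of the paper's step, since the lower bound in Theorem \ref{thm-main-tech} is itself obtained from the very character $\Phi_s$ via Theorem \ref{thm-SUq(2)-char}, and you already note the direct use of that lower bound as an equivalent alternative.
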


\begin{proof}
Note that
	$$||\tilde{\varphi}_s: A(SU_q(2), w_\beta) \to M_{2s+1} ||_{cb} = \sup_{t\in \frac{1}{2}\z_+}\frac{||v^{(t)}_s||_\infty}{w_\beta(t)} = \sup_{t\in \frac{1}{2}\z_+}\frac{||v^{(t)}_s||_\infty}{\beta^{2t}}.$$
This gives the first assertion with Theorem \ref{thm-main-tech} since the map $\tilde{\varphi}_s$ with the finite dimensional range is completely bounded if and only if it is bounded.

For the second assertion we only need to observe that
$${\rm sp}\,c_0(\widehat{SU_q(2)}) \cong \frac{1}{2}\z_+ \cong \bigcup_{\beta\ge 1}\{A_s: s\in \frac{1}{2}\z_+,\; \text{$\tilde{\varphi}_s$ is bounded on $A(SU_q(2), w_\beta)$}\}$$
by Proposition \ref{prop-SLq2-matrix}.
\end{proof}

\begin{rem}{\rm
\begin{enumerate}
\item
If we apply a similar argument using the fusion rule as in the character case, then we get the upper bound.
$$\norm{v^{(t)}_{\varphi_s}}_\infty \le C^t_q \abs{q}^{-2st}.$$
Note that the above estimate involves $t$ instead of $s$, which is not the conclusion we wanted.
	
\item
In general we do not know the exact formula of $v^{(t)}_{\varphi_s}$ except the case of $s=1/2$. That is the main technical difficulty we have.
\end{enumerate}
}
\end{rem}

\section*{Acknowledgements} We, the authors, are grateful to Robert Yuncken and Christian Voigt for valuable discussions and informations. We also thank to the anonymous referee for his/her careful reading and valuable suggestions.

\bibliographystyle{alpha}

\begin{thebibliography}{99}

\bibitem{ban-onplus}
T. Banica,
Th\'eorie des repr\'esentations du groupe quantique compact libre 
$O(n)$. C. R. Acad. Sci. Paris S\'er. I Math. 322 (1996), no. 3, 241-244. 

\bibitem{Ban}
T. Banica, Le groupe quantique compact libre $U(n)$. Commun. Math. Phys. {\bf 190} (1997), 143--172.

\bibitem{ban-snplus}
T. Banica,
Symmetries of a generic coaction.
Math. Ann. 314 (1999), no.\ 4, 763-780. 

\bibitem{BtD}
T. Br{\"o}cker and T. tom Dieck, Representations of compact Lie groups. Graduate Texts in Mathematics, 98. Springer-Verlag, New York, 1985. x+313 pp.

\bibitem{CMc}
D. I. Cartwright, J. R. McMullen, A generalised complexification for compact groups, J. Reine Angew. Math. {\bf 331} (1982) 1--15.

\bibitem{DKSS}
M. Daws, P. Kasprzak, A. Skalski and P. M. Sołtan, Closed quantum subgroups of locally compact quantum groups. Adv. Math. {\bf 231} (2012), no. 6, 3473--3501. 

\bibitem {ER}
E. G. Effros and Z.-J. Ruan, \emph{Operator spaces}, London Math. Soc. Monographs, New series, vol. 23, Oxford University Press, New York, 2000.

\bibitem{EnSch}
M. Enock and J.-M. Schwartz, Une dualit\'{e} dans les alg\`{e}bres de von Neumann, \emph{Bull. Soc. Math. France M\'{e}m.}, No. 44. Suppl\'{e}ment au Bull. Soc. Math. France, Tome 103, no. 4. Soci\'{e}t\'{e} Mathématique de France, Paris, 1975. 144 pp.

\bibitem{FLS}
U. Franz, H. H. Lee and A. Skalski, Integration over the quantum diagonal subgroup and associated Fourier-like algebras, Internat. J. Math. {\bf 27} (2016), no. 9, 1650073, 37 pp.

\bibitem{franz+skalski+tomatsu09}
U.~Franz, A.G. Skalski, and R.~Tomatsu, Idempotent states on compact quantum groups and their classification on $U_q(2)$, $SU_q(2)$, and $SO_q(3)$. J. Noncommut. Geom. {\bf 7} (2013), no. 1, 221--254.

\bibitem{GLSS}
M. Ghandehari, H. H. Lee, E. Samei and N. Spronk, Some Beurling-Fourier algebras on compact groups are operator algebras. Trans. Amer. Math. Soc. 367 (2015), no. {\bf 10}, 7029--7059.

\bibitem{HeRo}
E. Hewitt and K. A. Ross, Abstract harmonic analysis. Vol. II: Structure and analysis for compact groups. Analysis on locally compact Abelian groups. Die Grundlehren der mathematischen Wissenschaften, Band 152 Springer-Verlag, New York-Berlin 1970 ix+771 pp.

\bibitem{KalNeu}
M. Kalantar and M. Neufang, From quantum groups to groups, Canad. J. Math. {\bf 65} (2013), no. 5, 1073-1094.

\bibitem{Kan}
E. Kaniuth, A course in commutative Banach algebras. Graduate Texts in Mathematics, 246. Springer, New York, 2009. xii+353 pp.

\bibitem{Koo}
T. H. Koornwinder, Representations of the twisted $SU(2)$ quantum group and some $q$-hypergeometric orthogonal polynomials. 
Nederl. Akad. Wetensch. Indag. Math. {\bf 51} (1989), no. 1, 97--117. 

\bibitem{KS17}
Jacek Krajczok, Piotr M. So{\l}tan, Compact quantum groups with representations of bounded degree, Journal of Operator Theory, Volume 80, Issue 2,  Fall  2018  pp. 415--428.

\bibitem{Kus}
J. Kustermans, Locally compact quantum groups in the universal setting. Internat. J. Math. {\bf 12} (2001), no. 3, 289--338. 

\bibitem{LS}
H. H. Lee and E. Samei, Beurling-Fourier algebras, operator amenability and Arens regularity. J. Funct. Anal. {\bf 262} (2012), no. 1, 167--209.

\bibitem{LST}
J. Ludwig, N. Spronk and L. Turowska, Beurling-Fourier algebras on compact groups: spectral theory. J. Funct. Anal. {\bf 262} (2012), no. 2, 463--499.

\bibitem{McK}
K. McKennon, The structure space of the trigonometric polynomials on a compact group. J. Reine Angew. Math. {\bf 307/308} (1979), 166--172.

\bibitem{neshveyev+tuset}
S. Neshveyev and L. Tuset, Compact quantum groups and their representation categories. Cours Sp{\'e}cialis{\'e}s [Specialized Courses], 20. Soci{\'e}t{\'e} Math{\'e}matique de France, Paris, 2013.

\bibitem{Pisier}
G.\,Pisier, ``Introduction to Operator Space Theory,'' London Mathematical Society Lecture Note Series \textbf{294}, Cambridge University Press, Cambridge 2003.

\bibitem{podles+woronowicz90}
P. Podle{\'s} and S. L. Woronowicz, Quantum deformation of Lorentz group. Comm. Math. Phys. {\bf 130} (1990), no. 2, 381--431.

\bibitem{Soi}
Ya. S. Soibelman,  Algebra of functions on a compact quantum group and its representations. (Russian) Algebra i Analiz 2 (1990), no. {\bf 1}, 190--212; translation in Leningrad Math. J. 2 (1991), no. {\bf 1}, 161--178. 

\bibitem{VanWa}
A. Van Daele and S. Wang, Universal quantum groups. Internat. J. Math. {\bf 7} (1996), 255--263.

\bibitem{vdmult}
A. Van Daele, Multiplier Hopf $*$-algebras with positive integrals: a laboratory for locally compact quantum groups. Locally compact quantum groups and groupoids (Strasbourg, 2002), 229–247, IRMA Lect.\ Math.\ Theor.\ Phys., 2, de Gruyter, Berlin, 2003.

\bibitem{snplus-wang}
S. Wang,
Quantum symmetry groups of finite spaces.
Comm. Math. Phys. {\bf 195} (1998), no. 1, 195-211.

\bibitem{Tomatsu}
R. Tomatsu, A Characterization of Right Coideals of Quotient Type
and its Application to Classification of Poisson Boundaries. Comm. Math. Phys. {\bf 275} (2007), no. 1, 271--296.

\bibitem{woronowicz87}
S. L. Woronowicz, Compact matrix pseudogroups. Comm. Math. Phys. {\bf 111} (1987), no. 4, 613--665.

\bibitem{woronowicz91}
S. L. Woronowicz, Unbounded elements affiliated with C$^*$-algebras and noncompact quantum groups. Comm. Math. Phys. {\bf 136} (1991), no. 2, 399--432.

\bibitem{woronowicz98}
S. L. Woronowicz, Compact quantum groups. Sym{\'e}tries quantiques (Les Houches, 1995), 845--884, North-Holland, Amsterdam, 1998.

\end{thebibliography}

\end{document}